\renewcommand{\Re}{{\operatorname{Re}\,}}
\renewcommand{\epsilon}{\varepsilon}
\newcommand{\kahler}{K\"ahler }
\newcommand{\N}{{\mathbb N}}
\newcommand{\R}{{\mathbb R}}
\newcommand{\C}{{\mathbb C}}
\newcommand{\D}{{\mathbb D}}
\newcommand{\dbar}{\bar\partial}
\newcommand{\onep}{1+\epsilon(k)}
\newcommand{\ok}{O(\frac{1}{k})}
\newcommand{\epk}{\epsilon(k)}
\renewcommand{\phi}{\varphi}
\newcommand{\hcal}{\mathcal{H}}
\newcommand{\tlog}{e^{-2(\log k)^2}}
\newtheorem{thm}{Theorem}[section]
\newtheorem{theorem}{{Theorem}}[section]
\newtheorem{theo}{{Theorem}}[section]
\newtheorem{cor}[theorem]{{Corollary}}
\newtheorem{lem}[theorem]{{Lemma}}
\newtheorem{prop}[theorem]{{Proposition}}
\newenvironment{rem}{\medskip\noindent{\it Remark:\/} }{\medskip}
\newenvironment{nota}{\medskip\noindent{\it Notation:\/} }{\medskip}
\newenvironment{note}{\medskip\noindent{\it Notice:\/} }{\medskip}
\newtheorem{defin}[theo]{{\sc Definition}}
\theoremstyle{definition}
\numberwithin{equation}{section}
\def \N {\mathbb N}
\def \C {\mathbb C}
\def \R {\mathbb R}
\title[Bergman Kernel]{Estimations of the Bergman Kernel of the punctured disk}
\author{Jingzhou Sun}
\thanks{The author is partially supported by NNSF of China no.11701353 and the STU Scientific Research Foundation for Talents no.130/760181.}
\address{Department of Mathematics, Shantou University, Shantou City, Guangdong Province 515063, China}
\email{jzsun@stu.edu.cn}
\begin{document}
	
	\begin{abstract}
Using the techniques developed in \cite{SunSun}, we give estimations of the Bergman kernel of the punctured disk with the standard complete Poincar\'{e} metric. As an application, we improve the result of \cite{AMM} on the Bergman kernels of punctured Riemann surfaces near singularities.
	\end{abstract}

	\maketitle
	
	

	\section{Introduction}

	\
	The asymptotic of the Bergman kernel of an ample line bundle over a projective manifold, since being proved by Tian, Zelditch, Catlin, Lu\cite{Tian1990On, Zelditch2000Szego, Lu2000On, Catlin1999, MM}, has played an important role in the recent developments in complex geometry, for example \cite{donaldson2001}\cite{Sun2011Expected}  \cite{Donaldson2014Gromov},	\cite{Donaldson15}. 
	
	It is then natural to consider the Bergman kernels of singular metrics. In \cite{AMM},	Auvray-Ma-Marinescu studied the Bergman kernels on punctured Riemann surfaces and showed that near each singularity the Bergman kernel is very close to the Bergman kernel of the standard punctured disk. They also showed some properties of the Bergman kernel of the standard punctured disk. There are also many results in the literature studying the asymptotics of Bergman kernels of singular K\"ahler metrics, see for example  \cite{LL, RT, DLM}. 
	
	In \cite{SunSun}, we developed the mass concentration technique. But we were concerned only with some integrals over the Fubini-Study metric, and in the end we did not need to figure out the asymptotic of the Bergman kernel. The author realizes that our technique can be used to give a clearer description of the asymptotic of the Bergman kernel of the punctured disk and of the punctured Riemann surfaces. 
	
	In this article, we will first focus on the punctured disk, where we can give a very clear description of the Bergman kernel. Then we put ourself under the same scenario as that considered in \cite{AMM}, and show that we have a better understanding of the Bergman kernels of punctured Riemann surfaces. 
	
	As noticed in \cite{SunSun}, the Begman kernel of the punctured disk should behave according to the distance to the origin. To distinguish it from that of the punctured Riemann surfaces, we denote by $\rho_{0,k}$ the Bergman kernel of the punctured disk with the setting explained in section 2.
	
	For $z$ near the origin, we have the following results.
	\begin{thm}\label{lattice-inside}
		Let $k\geq 3$ be an integer.
		\begin{itemize}
			\item[$\bullet$]  When $|z|\geq e^{-k/2}$
			$$	\rho_{0,k+1}(z)=\frac{(\log \frac{1}{|z|^2})^{k+1}|z|^2}{2\pi(k-1)!}(1+\epsilon_1)$$
			, where $0<\epsilon_1\leq 2^{k+1}|z|^2$.
			\item[$\bullet$] Let $b\geq 2$ be an integer such that $\frac{k}{b(b+1)}\geq \log 2$. When  $|z|=e^{-k/(2b)}$, we have
			$$	\rho_{0,k+1}(z)=\frac{k^{k+1}e^{-k}}{2b\pi(k-1)!}(1+\epsilon_b)$$
			, where $(1+\frac{1}{b})^ke^{-k/b}+(1-\frac{1}{b})^ke^{k/b}<\epsilon_b<2[(1+\frac{1}{b})^ke^{-k/b}+(1-\frac{1}{b})^ke^{k/b}]$
		\end{itemize}	
	\end{thm}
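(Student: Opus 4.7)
My plan is to make the Bergman kernel completely explicit as a series, then do a discrete Laplace-type analysis around its peak in each of the two cases. Because the weighted $L^{2}$ space described in Section~2 is rotation-invariant, the Hilbert space has an orthogonal basis of monomials $\{z^{\ell}\}_{\ell\ge 1}$; computing $\|z^{\ell}\|^{2}$ in the cusp metric (after the change of variables $u=-\log|z|^{2}$ reducing the radial integral to a Gamma integral) and reading off the on-diagonal reproducing kernel gives
\begin{equation*}
\rho_{0,k+1}(z)=\sum_{\ell\ge 1}a_{\ell}(z),\qquad a_{\ell}(z)=\frac{\ell^{k}\bigl(\log\tfrac{1}{|z|^{2}}\bigr)^{k+1}|z|^{2\ell}}{2\pi(k-1)!}.
\end{equation*}
Setting $t:=-\log|z|^{2}$, the two leading expressions claimed in the theorem are recognized immediately as individual terms of this series: $a_{1}(z)=\tfrac{t^{k+1}|z|^{2}}{2\pi(k-1)!}$ in the first bullet, and after substituting $t=k/b$, $a_{b}(z)=\tfrac{k^{k+1}e^{-k}}{2b\pi(k-1)!}$ in the second. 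The engine of both estimates is the consecutive-ratio identity
\begin{equation*}
\frac{a_{\ell+1}(z)}{a_{\ell}(z)}=\Bigl(1+\tfrac{1}{\ell}\Bigr)^{k}e^{-t},
\end{equation*}
which shows $(a_{\ell})$ is log-concave with peak at $\ell_{\star}\approx k/t$, hence $\ell_{\star}=1$ at the boundary $t=k$ of the first case and $\ell_{\star}=b$ in the second case.

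For the first bullet I take $a_{1}$ as the reference and rewrite the tail as $\epsilon_{1}=\sum_{\ell\ge 2}\ell^{k}e^{-(\ell-1)t}$. Since $(1+1/\ell)^{k}e^{-t}\le 2^{k}e^{-t}=2^{k}|z|^{2}$, which under the hypothesis is at most $(2/e)^{k}\le 1/2$ for $k\ge 3$, the tail is majorized by a geometric series of first term $2^{k}|z|^{2}$, giving
$\epsilon_{1}\le\tfrac{2^{k}|z|^{2}}{1-2^{k}|z|^{2}}\le 2^{k+1}|z|^{2}$; positivity of $\epsilon_{1}$ is obvious.

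For the second bullet I split $\epsilon_{b}=\sum_{j\ge 1}u_{j}+\sum_{j=1}^{b-1}v_{j}$ where
\begin{equation*}
u_{j}=\frac{a_{b+j}}{a_{b}}=\Bigl(1+\tfrac{j}{b}\Bigr)^{k}e^{-jk/b},\qquad v_{j}=\frac{a_{b-j}}{a_{b}}=\Bigl(1-\tfrac{j}{b}\Bigr)^{k}e^{jk/b},
\end{equation*}
so that $u_{1}$ and $v_{1}$ are precisely the two quantities appearing in the bound for $\epsilon_{b}$. The consecutive ratios are $u_{j+1}/u_{j}=(1+1/(b+j))^{k}e^{-k/b}$ and $v_{j+1}/v_{j}=(1-1/(b-j))^{k}e^{k/b}$, and the elementary bounds $\log(1+x)\le x$ and $\log(1-x)\le -x$ give
$u_{j+1}/u_{j}\le e^{-kj/(b(b+j))}$ and $v_{j+1}/v_{j}\le e^{-kj/(b(b-j))}$. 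Since $j\mapsto j/(b(b\pm j))$ is increasing, both exponents are at least $1/(b(b+1))$ for $j\ge 1$, and the hypothesis $k/(b(b+1))\ge\log 2$ is exactly what forces both ratios to be $\le 1/2$. Hence each tail is geometric with common ratio at most $1/2$, yielding $\sum_{j\ge 1}u_{j}\le 2u_{1}$ and $\sum_{j=1}^{b-1}v_{j}\le 2v_{1}$, which is the claimed upper bound on $\epsilon_{b}$; the lower bound comes from retaining only $j=1$ on each side.

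The main obstacle is to confirm the precise role of the hypothesis $k/(b(b+1))\ge\log 2$: one must track the Taylor remainders of $\log(1\pm 1/(b\pm j))$ and, in particular, show that the left-tail ratio $v_{j+1}/v_{j}$ does not degenerate as $j$ approaches $b-1$ (where $1-1/(b-j)$ approaches $0$). The monotonicity of $j/(b(b\pm j))$ is what closes this gap, but the small-$b$ cases (notably $b=2$) require a direct computational check to ensure the threshold $\log 2$ is sharp on both sides.
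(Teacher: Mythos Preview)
Your argument is correct and matches the paper's approach: both isolate the peak term of the series $\rho_{0,k+1}=\sum_{\ell\ge 1}\ell^{k}t^{k+1}e^{-\ell t}/(2\pi(k-1)!)$ and bound the remaining terms by a geometric series using the log-concavity of $\ell\mapsto\ell^{k}e^{-\ell t}$. The paper phrases this via the concavity of $h_t(c)=k\log(1+c/b)-ct$ and the derivative values $h_t'(\pm 1)$, which yield exactly your ratio bounds $e^{-k/(b(b\pm 1))}$; your direct computation of $u_{j+1}/u_j$ and $v_{j+1}/v_j$ is the same estimate written out.

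Two remarks. First, the hypothesis in the first bullet should read $|z|\le e^{-k/2}$ (equivalently $t\ge k$); you used this correctly when bounding $2^{k}|z|^{2}\le(2/e)^{k}$, and indeed the stated inequality $\epsilon_1\le 2^{k+1}|z|^2$ is false for $|z|$ near $1$. Second, the ``main obstacle'' you flag in your last paragraph is not an obstacle at all: since $j\mapsto j/(b(b-j))$ is increasing on $1\le j\le b-1$, the left-tail ratios $v_{j+1}/v_j$ only get \emph{smaller} as $j$ grows, so there is no degeneration near $j=b-1$, and for $b=2$ the left tail is the single term $v_1$. The hypothesis $k/(b(b+1))\ge\log 2$ already gives the uniform bound $1/2$ on all consecutive ratios, so no separate small-$b$ check is needed.
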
	
	Since $(1+\frac{1}{b})^ke^{-k/b}< e^{-k/(2b^2)+k/(3b^3)}$, the error term is very small when $b$ is small, that is when $z$ is close to the origin. For example, when $b<\sqrt{k}/(\log k)$, the error is less than $e^{-k/3}$. 
	One can also use Stirling's formula on factorial to make the formula looks even clearer, at the cost of some precision. Recall that Robbins \cite{robbins} proved that 
	$$e^{1/(12n+1)}<\frac{n!}{\sqrt{2\pi}n^{n+1/2}e^{-n}}<e^{1/(12n)}$$
	So we have	
	\begin{cor}
		Assume $k\geq 79$, $b$ a positive integer satisfying $\frac{k}{2b^2}-\frac{k}{3b^3}\geq 3\log k$. When  $|z|=e^{-k/(2b)}$, we have
		$$	\rho_{0,k+1}(z)=\frac{k^{3/2}}{b(2\pi)^{3/2}}(1+\frac{1}{12k}+\epsilon_b)$$
		, where $\epsilon_b<\frac{9}{k^2}$
	\end{cor}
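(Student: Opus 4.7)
I would derive this corollary by combining the second bullet of Theorem~\ref{lattice-inside} with the quoted Robbins inequality through two largely independent estimates. The plan is first to convert the factor $\frac{k^{k+1}e^{-k}}{2b\pi(k-1)!}$ into a tidy multiple of $\frac{k^{3/2}}{b(2\pi)^{3/2}}$, and then to show that the Theorem~\ref{lattice-inside} remainder is of size $O(1/k^3)$ under the sharper hypothesis on $b$.

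For the first estimate, I would write $(k-1)!=k!/k$ and apply Robbins to $k!$ to obtain
$$\frac{k^{k+1}e^{-k}}{2b\pi (k-1)!}=\frac{k^{3/2}}{b(2\pi)^{3/2}}\,e^{-\alpha},\qquad \alpha\in\Bigl(\tfrac{1}{12k+1},\tfrac{1}{12k}\Bigr).$$
In particular $e^{-\alpha}<1<1+\tfrac{1}{12k}$, so this prefactor is already strictly less than $\frac{k^{3/2}}{b(2\pi)^{3/2}}\bigl(1+\tfrac{1}{12k}\bigr)$.

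For the second estimate I would expand
$$\log\!\bigl[(1\pm\tfrac1b)^k e^{\mp k/b}\bigr]=-\tfrac{k}{2b^2}\pm\tfrac{k}{3b^3}-\tfrac{k}{4b^4}\pm\cdots,$$
and argue that the tail beyond the first two terms can be dropped: the ``$+$'' series is alternating with decreasing terms starting at $-k/(2b^2)$, and the ``$-$'' series has all negative terms. Hence each summand in $S:=(1+\tfrac1b)^ke^{-k/b}+(1-\tfrac1b)^ke^{k/b}$ is at most $\exp\!\bigl(-\tfrac{k}{2b^2}+\tfrac{k}{3b^3}\bigr)\le e^{-3\log k}=k^{-3}$ by the standing hypothesis, giving $S\le 2/k^3$ and therefore a Theorem~\ref{lattice-inside} error bounded by $2S\le 4/k^3$. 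Stitching the two estimates together yields
$$\rho_{0,k+1}(z)\le\frac{k^{3/2}}{b(2\pi)^{3/2}}\bigl(1+\tfrac{4}{k^3}\bigr),$$
and writing this in the form $\frac{k^{3/2}}{b(2\pi)^{3/2}}(1+\tfrac{1}{12k}+\epsilon_b)$ forces $\epsilon_b\le \tfrac{4}{k^3}-\tfrac{1}{12k}$, which is already negative (hence \emph{a fortiori} below $\tfrac{9}{k^2}$) for any $k\ge 7$. The explicit threshold $k\ge 79$ supplies generous slack for absorbing the lower-order Taylor tails of $\log(1\pm 1/b)$ and for ensuring Theorem~\ref{lattice-inside}'s own side condition $\tfrac{k}{b(b+1)}\ge\log 2$.

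The main subtlety, rather than a genuine obstacle, is the bookkeeping between two corrections of comparable size and opposite sign: the $+\tfrac{1}{12k}$ that sits inside the target form and the Stirling correction $e^{-\alpha}\in(1-\tfrac{1}{12k},1)$. Both are easily dominated by the $4/k^3$ contribution from $S$, so the final inequality $\epsilon_b<9/k^2$ is loose with plenty of room. The only step requiring genuine verification is the truncation of the $\log(1\pm 1/b)$ expansions at two terms, where $b\ge 2$ is used to guarantee the series converges rapidly enough for the alternating/monotone tail estimate to hold.
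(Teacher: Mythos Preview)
Your approach is exactly the paper's intended one: apply Robbins' form of Stirling to the prefactor $\frac{k^{k+1}e^{-k}}{2b\pi(k-1)!}$ and use the paper's own estimate $(1+\tfrac1b)^k e^{-k/b}<e^{-k/(2b^2)+k/(3b^3)}$ (together with the analogous, even smaller, bound for the minus-sign term) to make the Theorem~\ref{lattice-inside} remainder $\le 4/k^3$ under the hypothesis $\tfrac{k}{2b^2}-\tfrac{k}{3b^3}\ge 3\log k$. The paper gives no argument beyond pointing to Stirling and this inequality, so your write-up is in fact more detailed than the original.

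One comment on content rather than method: you have correctly spotted that the Stirling correction produces $e^{-\alpha}\approx 1-\tfrac{1}{12k}$, not $1+\tfrac{1}{12k}$, so the $\epsilon_b$ defined by the corollary is genuinely of order $-\tfrac{1}{6k}$ and the stated bound $\epsilon_b<\tfrac{9}{k^2}$ holds for the trivial reason you give. This is almost certainly a sign typo in the paper (the intended display should read $1-\tfrac{1}{12k}$, in which case $|\epsilon_b|<\tfrac{9}{k^2}$ becomes a meaningful two-sided estimate); your computation already contains everything needed to prove that corrected version. Your final paragraph's phrase ``both are easily dominated by the $4/k^3$ contribution'' is misworded --- the $1/(12k)$ terms are not dominated by $4/k^3$, they simply push $\epsilon_b$ further below the stated ceiling --- but the actual inequality chain you wrote is correct.
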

Points where $|z|=e^{-k/(2b)}$, with $b$ an integer, are called lattice points.
And for $z$ between lattice points, we have
\begin{thm}\label{others-inside}
	Let $a\geq 1$ be an integer such that $a\leq \sqrt{k}/\log k-1$, $t=-\log |z|^2$ and $k\geq 55$. Then for $t\in (\frac{k}{a+1},\frac{k}{a})$, $\rho_{0,k+1}$, up to an error smaller than
	$$2[(1+\frac{1}{a})^ke^{-k/a}]+2[(1+\frac{1}{a+1})^ke^{-k/(a+1)}]$$
	, is convex, and has exactly one minimum, which is smaller than 
	$$e^{-k/(17a^2)}\frac{k^{k+1}e^{-k}}{2\pi (k-1)!}(\frac{1}{a}+\frac{1}{a+1})$$
\end{thm}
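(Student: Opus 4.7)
The plan is to exploit the series expansion underlying Theorem~\ref{lattice-inside},
\[
\rho_{0,k+1}(z)=\frac{t^{k+1}}{2\pi(k-1)!}\sum_{j=1}^{\infty} j^{k} e^{-jt},\qquad t=-\log|z|^{2},
\]
which follows from the orthonormal basis $\sqrt{j^{k}/(2\pi(k-1)!)}\,z^{j}$ for $j\geq 1$. For $t\in(k/(a+1),k/a)$ the continuous maximiser of $j\mapsto j^{k}e^{-jt}$ is $j^{\ast}=k/t\in(a,a+1)$, so I split $\rho_{0,k+1}=H+R$ with
\[
H(t)=\frac{t^{k+1}}{2\pi(k-1)!}\bigl[a^{k}e^{-at}+(a+1)^{k}e^{-(a+1)t}\bigr]
\]
the two-term truncation and $R$ the tail over indices $j\notin\{a,a+1\}$.

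The first technical step is to bound $|R|/H$ by the stated error. For the upper tail $j\geq a+2$, consecutive terms have ratio $(1+1/j)^{k}e^{-t}\leq(1+1/(a+1))^{k}e^{-k/(a+1)}$ on the interval, and the hypothesis $a+1\leq\sqrt{k}/\log k$ forces this ratio below $1/2$ via $k\log(1+x)-kx\leq-kx^{2}/2+kx^{3}/3$; the tail therefore sums geometrically to at most twice its leading relative term. The lower tail $j\leq a-1$ is handled symmetrically after noting the Taylor comparison $(1-1/a)^{k}e^{k/a}<(1+1/a)^{k}e^{-k/a}$, which follows from $\log(1-x)-\log(1+x)+2x=-2x^{3}/3-2x^{5}/5-\cdots$. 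Combining both contributions yields $|R(t)|\leq\{2(1+1/a)^{k}e^{-k/a}+2(1+1/(a+1))^{k}e^{-k/(a+1)}\}\,H(t)$, matching the claimed error.

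Next I would analyse the shape of $H$. Direct differentiation gives
\[
H'(t)=t^{k}\bigl\{a^{k}e^{-at}[(k+1)-at]+(a+1)^{k}e^{-(a+1)t}[(k+1)-(a+1)t]\bigr\}.
\]
The first bracket is strictly positive on the interval since $at<k+1$, and at both endpoints one verifies $H'>0$ using the tiny factors $(1+1/(a+1))^{k}e^{-k/(a+1)}$ and $(1+1/a)^{k}e^{-k/a}$ respectively. At the balance point $t^{\ast}=k\log(1+1/a)\in(k/(a+1),k/a)$, where the two exponentials equalise, $H'(t^{\ast})/t^{\ast\,k}$ reduces to $q\cdot[2(k+1)-(2a+1)t^{\ast}]$, and the expansion $(2a+1)\log(1+1/a)=2+1/(6a^{2})+O(1/a^{3})$ gives $H'(t^{\ast})<0$ under the hypothesis on $a$. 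Hence $H'$ has at least two zeros in the interior; a monotonicity argument comparing the two summands of $H'/t^{k}$ (each a monotone exponential times an affine factor) shows there are exactly two, yielding one local maximum near the left endpoint and one local minimum near $t^{\ast}$. The spurious local maximum rises above its neighbouring values by less than the error from Step~1, so absorbing it into the allowed error produces the advertised convex shape with a single minimum.

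Finally, for the bound on the minimum I evaluate $H$ at $t^{\ast}$, which gives $H_{\min}\leq H(t^{\ast})$. Writing $\alpha=a\log(1+1/a)$,
\[
H(t^{\ast})=\frac{k^{k+1}e^{-k}}{2\pi a(k-1)!}\cdot 2\alpha^{k+1}e^{k(1-\alpha)},
\]
and the Taylor expansion $\alpha=1-1/(2a)+1/(3a^{2})-1/(4a^{3})+\cdots$ yields $(k+1)\log\alpha+k(1-\alpha)=-1/(2a)+(5-3k)/(24a^{2})+O(k/a^{3})$. Under the hypothesis $a\leq\sqrt{k}/\log k-1$ the quadratic term $\sim -k/(8a^{2})$ dominates and is comfortably below $-k/(17a^{2})$ after absorbing the $O(k/a^{3})$ and $O(1/a)$ corrections (since $k/a^{2}\geq(\log k)^{2}$). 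Because the sum of the two lattice values equals $\frac{k^{k+1}e^{-k}}{2\pi(k-1)!}(1/a+1/(a+1))$, this gives the stated bound on the minimum after the small multiplicative contribution from $R$ is absorbed. The main obstacle I foresee is Step~2: $H$ is not individually strictly convex throughout the interval, so the uniqueness of the minimum requires careful sign-tracking of $H'$ together with a quantitative bound showing that the shallow spurious local maximum is dominated by the error budget from Step~1.
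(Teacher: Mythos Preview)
Your overall strategy is the paper's: reduce $\rho_{0,k+1}$ to the two–term truncation $H=h_a+h_{a+1}$ with $h_j(t)=t^{k+1}j^{k}e^{-jt}/\bigl(2\pi(k-1)!\bigr)$, bound the tail geometrically, analyse the shape of $H$ on $(k/(a+1),k/a)$, and control the minimum by evaluating at $s_a=k\log(1+1/a)$. Steps~1 and~3 of your proposal match the paper essentially line for line.

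The divergence is in Step~2, and here you are actually more careful than the paper. The paper asserts that $H$ is literally convex on the interval, using (i) a lemma that $h_j''(t)>0$ whenever $jt\notin[k,k+2]$, and (ii) the claim $H'(k/(a+1))<0$. Both are wrong: the quadratic $u^{2}-(2k+2)u+k(k+1)$ has roots $k+1\pm\sqrt{k+1}$, not $k$ and $k+2$, so (i) fails; and at the left endpoint both factors $k+1-at$ and $k+1-(a+1)t$ are positive, so $H'(k/(a+1))>0$, contradicting (ii). You correctly record $H'>0$ at both endpoints and $H'(t^{\ast})<0$, which forces a local maximum just to the right of $k/(a+1)$ followed by a local minimum; in particular $H$ is \emph{not} convex on the full interval.

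Your ``exactly two critical points'' can indeed be made rigorous: setting $\phi(t)=H'(t)\,e^{(a+1)t}/t^{k}$ one finds $\phi''(t)=a^{k}e^{t}\bigl(k+1-a(t+2)\bigr)$, which changes sign once, and a convex--then--concave function with the sign pattern $+,-,+$ has exactly two zeros. But your proposed rescue of the convexity statement---absorbing the spurious local maximum into the Step~1 error---fails quantitatively. That bump sits near $t=(k+1)/(a+1)$ where $h_{a+1}$ peaks, and
\[
\frac{h_{a+1}\!\left(\tfrac{k+1}{a+1}\right)}{h_{a+1}\!\left(\tfrac{k}{a+1}\right)}
=\Bigl(1+\tfrac{1}{k}\Bigr)^{k+1}e^{-1}
=1+\tfrac{1}{2k}+O(k^{-2}),
\]
so the overshoot has relative size $\sim 1/(2k)$. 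The allowed error $2(1+1/a)^{k}e^{-k/a}+2(1+1/(a+1))^{k}e^{-k/(a+1)}$ is at most of order $e^{-k/(2a^{2})}\le k^{-(\log k)/2}$, which is far smaller than $1/k$. Hence the ``convex up to the stated error'' clause does not follow from your argument (nor, as written, from the paper's); what your outline does establish is the existence and uniqueness of the interior minimum and the bound on its value.
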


	We see that, due to the sparseness of sections near the origin, the Bergman kernel exhibits a very interesting quantum phenomenon, namely, it attains the "regular" value $\frac{k^{1/2}\log\frac{1}{|z|^2} }{(2\pi)^{3/2}}$ at the "lattice points", and it decreases to nearly $0$, for $k$ large, between the "lattice points".
	
\

 For $z$ far away from the origin, which we refer to as "outside", we have the following results.

 \begin{thm}\label{outside}
	With the notation $-\log |z|^2=t$, we have
	$$\rho_{0,k+1}(z)=\frac{k}{2\pi}(1+\epsilon_t)$$
	where $\epsilon_t<(\frac{t^2}{\pi^2 (k-1)}+2\frac{k+1}{k-1})(1+(\frac{2\pi }{t} )^2)^{-(k+1)/2} $
\end{thm}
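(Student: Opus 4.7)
The plan is to express $\rho_{0,k+1}(z)$ as an explicit series over the orthonormal basis described in Section~2 and then estimate the series by Poisson summation in the variable $t=-\log|z|^2$.

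From Section~2, the space of $L^2$ holomorphic $(k+1)$-pluricanonical sections has an orthonormal basis coming from the appropriately scaled monomials $z^n(dz/z)^{k+1}$, $n\ge 1$, whose pointwise Poincar\'e norms involve $|z|^{2n}(\log|z|^{-2})^{2(k+1)}$ and whose $L^2$ norms are computed by a gamma integral in $r^{2}$. After collecting factors, the Bergman kernel takes the form
\[
\rho_{0,k+1}(z) \;=\; \frac{1}{2\pi(k-1)!}\sum_{n\ge 1} t\,f(nt),\qquad f(u)=u^{k}e^{-u},
\]
with $f$ extended by zero to $u\le 0$. Since $f(0)=0$, Poisson summation gives
\[
\sum_{n\ge 1} t\,f(nt) \;=\; \sum_{m\in\Z}\widehat f(m/t),
\qquad \widehat f(\xi) \;=\; \int_0^\infty u^{k}e^{-(1+2\pi i\xi)u}\,du \;=\; \frac{k!}{(1+2\pi i\xi)^{k+1}}.
\]
The $m=0$ mode contributes $\widehat f(0)=k!$, which after division by $2\pi(k-1)!$ yields exactly the main term $k/(2\pi)$.

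For the error estimate, note that $|\widehat f(m/t)|=k!\bigl(1+(2\pi m/t)^{2}\bigr)^{-(k+1)/2}$. Using the $m\leftrightarrow -m$ symmetry and dividing by the main value $k/(2\pi)$, bounding $\epsilon_t$ reduces to estimating $2\sum_{m\ge 1}(1+(2\pi m/t)^{2})^{-(k+1)/2}$. I would isolate the $m=1$ contribution and apply the integral test to the decreasing tail, so that
\[
\sum_{m\ge 2}\bigl(1+(2\pi m/t)^{2}\bigr)^{-(k+1)/2}\;\le\;\int_1^\infty\bigl(1+(2\pi x/t)^{2}\bigr)^{-(k+1)/2}dx \;=\; \frac{t}{2\pi}\int_{2\pi/t}^\infty(1+v^{2})^{-(k+1)/2}dv.
\]
A single integration by parts, using $d(1+v^{2})^{-(k-1)/2}=-(k-1)v(1+v^{2})^{-(k+1)/2}dv$, bounds the inner integral by $\tfrac{(2\pi/t)^{-1}(1+(2\pi/t)^{2})^{-(k-1)/2}}{k-1}$. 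Writing $(1+(2\pi/t)^{2})^{-(k-1)/2}=(1+(2\pi/t)^{2})\cdot(1+(2\pi/t)^{2})^{-(k+1)/2}$ and combining with the $m=1$ piece yields a bound of the form $\bigl(\tfrac{t^{2}}{\pi^{2}(k-1)}+\tfrac{2(k+1)}{k-1}\bigr)(1+(2\pi/t)^{2})^{-(k+1)/2}$, which is the stated shape of $\epsilon_t$.

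The main obstacle is the bookkeeping of constants at the end: the $1/(k-1)$ is transparent from the single integration by parts, the factor $t^{2}/\pi^{2}$ comes out of the rescaling $v=2\pi x/t$ combined with the identity $(1+(2\pi/t)^{2})^{-(k-1)/2}=(1+(2\pi/t)^{2})(1+(2\pi/t)^{2})^{-(k+1)/2}$, and the factor $2(k+1)/(k-1)$ absorbs the $m=1$ contribution (with the $2$ coming from the $\pm m$ symmetry and the $(k+1)/(k-1)$ from packaging it into the same denominator as the integral tail, at the cost of a little slack). Once the explicit series formula in Section~2 is in hand, the argument is otherwise routine.
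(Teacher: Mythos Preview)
Your proposal is correct and follows essentially the same route as the paper: apply Poisson summation to the one-sided gamma-type function to obtain
\[
\rho_{0,k+1}=\frac{k}{2\pi}\sum_{m\in\Z}(1+2\pi i m/t)^{-(k+1)},
\]
isolate the $m=0$ term, and bound the remainder by an integral that can be evaluated in closed form. The paper writes the same identity (via the function $s_b$) and then bounds the tail using the weighted integral $\int_1^\infty \xi\,(1+(2\pi\xi/t)^2)^{-(k+1)/2}\,d\xi$, which is exactly what your ``integration by parts'' step amounts to after the change of variable $v=2\pi\xi/t$ (inserting the factor $v/v_0=\xi\ge 1$ to make the antiderivative elementary). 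Your bookkeeping in fact gives the slightly sharper constant $\tfrac{t^2}{2\pi^2(k-1)}+\tfrac{2k}{k-1}$, which is dominated by the stated $\tfrac{t^2}{\pi^2(k-1)}+\tfrac{2(k+1)}{k-1}$, so the ``little slack'' you noted is real and harmless.
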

Clearly, the error term decreases as $t$ decreases. In particular, when $t$ is small enough, this error becomes surprisingly small. For example, when $\frac{2\pi }{t}\geq e$, $\epsilon_t<e^{-k}$. One can compare this result with the asymptotic expansion in the compact smooth case, where by \cite{Lu2000On} the coefficients are functions of derivatives of the Riemann curvature. Under our setting, the curvature is constant, so by Lu's result, the coefficient of $k^{-1}$ is $-1$ and the coefficient of $k^{-i}$ for $i>1$ should be $0$. So our result agrees with this as $e^{-k}$ is of course asymptotically smaller than any $k^{-i}$. But the asymptotic expansion neither implies exponential decay nor tells when $k$ is big enough, while our theorem does both. The following corollary is interesting in that it does not require $k$ to be large.
\begin{cor}For $k\geq 3$
	$$	\lim_{|z|\to 1}\rho_{0,k}(z)=\frac{k}{2\pi }$$
\end{cor}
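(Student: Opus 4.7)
The plan is to derive the corollary directly from Theorem \ref{outside} by a spatial limit. Setting $t=-\log|z|^2$, the condition $|z|\to 1^-$ translates to $t\to 0^+$, so I would apply the theorem (after shifting the index $k\mapsto k-1$, which is permissible since the hypothesis $k\ge 3$ keeps the denominator $k-1$ safely away from zero) and then let $t\to 0^+$, reading off the limit from the leading term $(k-1+1)/(2\pi)$.

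The only substantive step is to verify that the explicit error term $\epsilon_t$ vanishes in this limit. The bound in Theorem \ref{outside} is the product of a polynomial factor $\tfrac{t^2}{\pi^2(k-1)}+2\tfrac{k+1}{k-1}$ and a decay factor $\bigl(1+(2\pi/t)^2\bigr)^{-(k+1)/2}$. The polynomial factor is uniformly bounded near $t=0$ (it converges to $2(k+1)/(k-1)$), while the decay factor is dominated by
\[
\Bigl(1+\bigl(\tfrac{2\pi}{t}\bigr)^2\Bigr)^{-(k+1)/2}\le\Bigl(\tfrac{t}{2\pi}\Bigr)^{k+1},
\]
which tends to $0$ as $t\to 0^+$ for any fixed $k\ge 0$. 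Multiplying the two yields $\epsilon_t\to 0$, and the corollary follows.

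There is essentially no obstacle: the whole argument is purely qualitative and rests only on the positive exponent $k+1$ in the decay factor together with the boundedness of the polynomial factor near $t=0$. The interest of the statement is not the difficulty of its proof but its conclusion, namely that the quantitative bound of Theorem \ref{outside} is strong enough to pin down the boundary value of the Bergman kernel even for small, fixed $k$, where classical asymptotic-expansion results (which require $k\to\infty$) give no information.
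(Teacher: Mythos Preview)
Your argument is correct and is exactly how the paper derives the corollary: it is an immediate consequence of Theorem~\ref{outside}, since as $t\to 0^+$ the decay factor satisfies $(1+(2\pi/t)^2)^{-(k+1)/2}\le (t/2\pi)^{k+1}\to 0$ while the polynomial prefactor stays bounded, forcing $\epsilon_t\to 0$. One small slip to flag: after the shift $k\mapsto k-1$ the leading coefficient in Theorem~\ref{outside} becomes $\tfrac{k-1}{2\pi}$, not $\tfrac{(k-1)+1}{2\pi}$; the printed corollary carries the same off-by-one (it matches Theorem~\ref{outside} verbatim only if read as a statement about $\rho_{0,k+1}$), but your method is exactly the intended one.
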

When $b\geq \frac{\sqrt{k\log k}}{2\pi }$, we see that the error is less than $k^{-1/2}$. 
This means that when $\log \frac{1}{|z|^2}\leq \frac{2\pi k^{1/2}}{\sqrt{\log k}}$, the Bergman kernel is basically $\frac{k}{2\pi}$.

\

We are left with a gap, where none of the above results produce satisfactory estimates. It was called the "neck" in \cite{SunSun}. We show that the Bergman kernel over the "neck" area is still understandable with the following theorem
	\begin{thm}\label{middle}
		Let $b$ be an integer such that $3<b\leq \sqrt{k}\log k$. Let $\gamma_b(u)=\sum_{c=-\infty}^{\infty}e^{-\frac{k}{2}(\frac{c}{b}-u)^2}$
		\begin{itemize}
			\item[$\bullet$]  If $|z|=e^{-k/(2b)}$, then
			$$\rho_{0,k+1}(z)\geq \frac{e^{-1/(12k)}}{b}(\frac{k}{2\pi})^{3/2}[(1-\frac{8(\log k)^4}{k})\gamma_b(0)-(6+2e^{\frac{8(\log k)^3}{3\sqrt{k}}})k^{-2\log k}]$$
			and 
			$$\rho_{0,k+1}(z)\leq \frac{1}{b}(\frac{k}{2\pi})^{3/2}\gamma_b(0)$$
			\item[$\bullet$] If $t=\log \frac{1}{|z|^2}\in (\frac{k}{b+1},\frac{k}{b})$, then
			$$\rho_{0,k+1}(z)\geq  \frac{tke^{-1/(12k)}}{(2\pi)^{3/2}}[(1-\frac{(\log k)^3}{3\sqrt{k}})\gamma_b(u)-12e^{-\frac{k}{2}(\frac{\log k}{\sqrt{k}}-u)^2}]$$
			$$\rho_{0,k+1}(z)\leq  \frac{tk}{(2\pi)^{3/2}}[(1+\frac{(\log k)^3}{3\sqrt{k}})\gamma_b(u)]$$
			,	where $u=1-\frac{tb}{k}$.
		\end{itemize}
		
	\end{thm}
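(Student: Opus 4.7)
The plan is to work from the explicit series
\[
\rho_{0,k+1}(z)=\frac{t^{k+1}}{2\pi(k-1)!}\sum_{j=1}^{\infty}j^{k}e^{-jt},\qquad t=-\log|z|^{2},
\]
coming from the orthogonal monomial basis $\{z^{j}\}_{j\ge 1}$ with $\|z^{j}\|^{2}=2\pi(k-1)!/j^{k}$. The $j$-th term is (continuously) maximized at $j_{0}=k/t$, and setting $x=(j-j_{0})/j_{0}$ the summand equals $\tfrac{t^{k+1}(k/t)^{k}e^{-k}}{2\pi(k-1)!}\,e^{k[\log(1+x)-x]}$ with Taylor expansion $k[\log(1+x)-x]=-\tfrac{k}{2}x^{2}+\tfrac{k}{3}x^{3}-\tfrac{k}{4}x^{4}+\cdots$. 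Robbins' form of Stirling identifies $\frac{k^{k}e^{-k}}{(k-1)!}$ with $\sqrt{k/(2\pi)}\,e^{\pm 1/(12k)}$, so the peak summand is $\frac{t\sqrt k}{(2\pi)^{3/2}}e^{\pm 1/(12k)}$, the scale that appears in both bullets; the sum over $j$ then becomes the theta-type sum $\gamma_{b}(u)$.

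Lattice case $t=k/b$: here $j_{0}=b$ is an integer and $x=c/b$ with $c=j-b\in\Z_{\ge 1-b}$, so the Gaussian factor $e^{-kx^{2}/2}=e^{-k(c/b)^{2}/2}$ coincides term-by-term with $\gamma_{b}(0)$. I would split at the window $|c|\le 2b\log k/\sqrt k$: on the window the cubic Taylor remainder $\tfrac{k}{3}(c/b)^{3}$ is bounded by $\tfrac{8(\log k)^{3}}{3\sqrt k}$ and the quartic by $\tfrac{4(\log k)^{4}}{k}$; the cubic's odd symmetry makes it cancel in the symmetric lattice sum (up to the one-sided defect at $c\le -b$), leaving the quartic as the multiplicative factor $1-\tfrac{8(\log k)^{4}}{k}$ in the lower bound. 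Outside the window each summand is at most $e^{-2(\log k)^{2}}=k^{-2\log k}$, and a polynomial count of contributing indices produces the additive tail $(6+2e^{8(\log k)^{3}/(3\sqrt k)})k^{-2\log k}$. The clean upper bound $\frac{1}{b}(k/(2\pi))^{3/2}\gamma_{b}(0)$ (without the Stirling correction) I would obtain by combining Robbins' lower bound on $(k-1)!$ with Poisson summation $\gamma_{b}(0)=b\sqrt{2\pi/k}\sum_{n}e^{-2\pi^{2}n^{2}b^{2}/k}$, together with the integral comparison $\sum_{j\ge 1}j^{k}e^{-jt}\le\max_{j}j^{k}e^{-jt}+\int_{0}^{\infty}j^{k}e^{-jt}\,dj$.

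Off-lattice case $t\in(k/(b+1),k/b)$: with $u=1-tb/k\in(0,1/(b+1))$ one gets $j_{0}=b/(1-u)$ and, for $c=j-b$, $x=(c/b-u)(1-u)$, so the quadratic part becomes $-\tfrac{k}{2}(1-u)^{2}(c/b-u)^{2}$. Since $u\le 1/(b+1)$, the factor $(1-u)^{2}=1+O(1/b)$ is absorbed into the $(\log k)^{3}/(3\sqrt k)$ error and recovers the $\gamma_{b}(u)$-weights $e^{-k(c/b-u)^{2}/2}$. The cubic correction no longer cancels by symmetry relative to the nearest integer $c=0$, so both estimates incur the same multiplicative error $(\log k)^{3}/(3\sqrt k)$. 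The additive term $12\,e^{-k(\log k/\sqrt k-u)^{2}/2}$ is the contribution of the boundary index $j=1$ (i.e.\ $c=1-b$), which becomes non-negligible precisely when $u$ approaches the threshold $\log k/\sqrt k$, with the factor $12$ absorbing a geometric tail of nearby boundary terms.

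The hardest part is uniform error control across the whole range $3<b\le \sqrt k\log k$. For small $b$ the Gaussian is sharp and $\gamma_{b}(u)$ is essentially one term, so the estimates are elementary; but for $b\sim\sqrt k\log k$ the width $b/\sqrt k\sim\log k$ covers many integer indices and the cubic and quartic Taylor remainders are tight at precisely the stated orders. The calibration of the window radius $2b\log k/\sqrt k$ against the tail threshold $k^{-2\log k}$ is chosen so that the cubic error on the window and the Gaussian decay outside have comparable smallness; this balance is what forces the specific form of the error terms in the statement.
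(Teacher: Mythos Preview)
Your overall architecture—peak analysis at $j_0=k/t$, Taylor expansion of $k[\log(1+x)-x]$ to compare with a Gaussian, Robbins' Stirling for the prefactor, truncation at $|c|\sim b\log k/\sqrt k$—is exactly the paper's, and the error orders you quote are the right ones. But two steps do not go through as written.

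First, your lattice upper bound. The integral comparison yields only $\rho_{0,k+1}\le \frac1b\bigl(\frac{k}{2\pi}\bigr)^{3/2}+\frac{k}{2\pi}$, whereas by Poisson the target is $\frac1b\bigl(\frac{k}{2\pi}\bigr)^{3/2}\gamma_b(0)=\frac{k}{2\pi}\sum_{n\in\Z}e^{-2\pi^2n^2b^2/k}$. At $b=\sqrt{k}\log k$ the target equals $\frac{k}{2\pi}$ plus a correction of size $e^{-2\pi^2(\log k)^2}$, while your bound carries an excess $\frac1b\sqrt{k/(2\pi)}\sim 1/\log k$; the integral comparison is simply too coarse at the upper end of the range. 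The paper instead bounds $f_b(k/b)\le\gamma_b(0)$ directly by the symmetric pairing $(1+\tfrac cb)^ke^{-ck/b}+(1-\tfrac cb)^ke^{ck/b}$ against $2e^{-kc^2/(2b^2)}$, and then uses Robbins' \emph{lower} bound on $k!$ to absorb the factor $e^{1/(12k)}$.

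Second, the additive term $12\,e^{-\frac k2(\log k/\sqrt k-u)^2}$ in the off-lattice lower bound is not the $j=1$ contribution. The index $j=1$ corresponds to $c/b-u\approx -(1+u)$, with Gaussian weight $e^{-k(1+u)^2/2}$, which is vastly smaller than the stated term. That constant $12$ collects the tail errors from truncating both $\sum_c e^{g_b(c,u)}$ and $\gamma_b(u)$ at $|c|=c_0$: each tail is dominated by a geometric series whose leading term is $e^{-\frac k2(c_0/b-u)^2}$, and the $u$-dependence reflects the asymmetry of the window relative to the shifted center, not any boundary at $j=1$. The paper's parametrization is also cleaner than yours here: writing $g_b(c,u)=k[\log(1+\tfrac cb)-\tfrac cb+\tfrac{cu}{b}+u+\log(1-u)]$ one gets the exact identity $g_b(c,u)+\frac k2(\tfrac cb-u)^2=k[\log(1+\tfrac cb)-\tfrac cb+\tfrac{c^2}{2b^2}]+k[\log(1-u)+u+\tfrac{u^2}{2}]$, so the cubic remainders in $c$ and in $u$ decouple completely. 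Your substitution $x=(c/b-u)(1-u)$ does not decouple, and the cross terms you propose to ``absorb'' into $O(1/b)$ are in fact of the same order $(\log k)^3/\sqrt k$ as the target precision.
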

	\begin{figure}
	\begin{center}
		\includegraphics[width=0.8 \columnwidth]{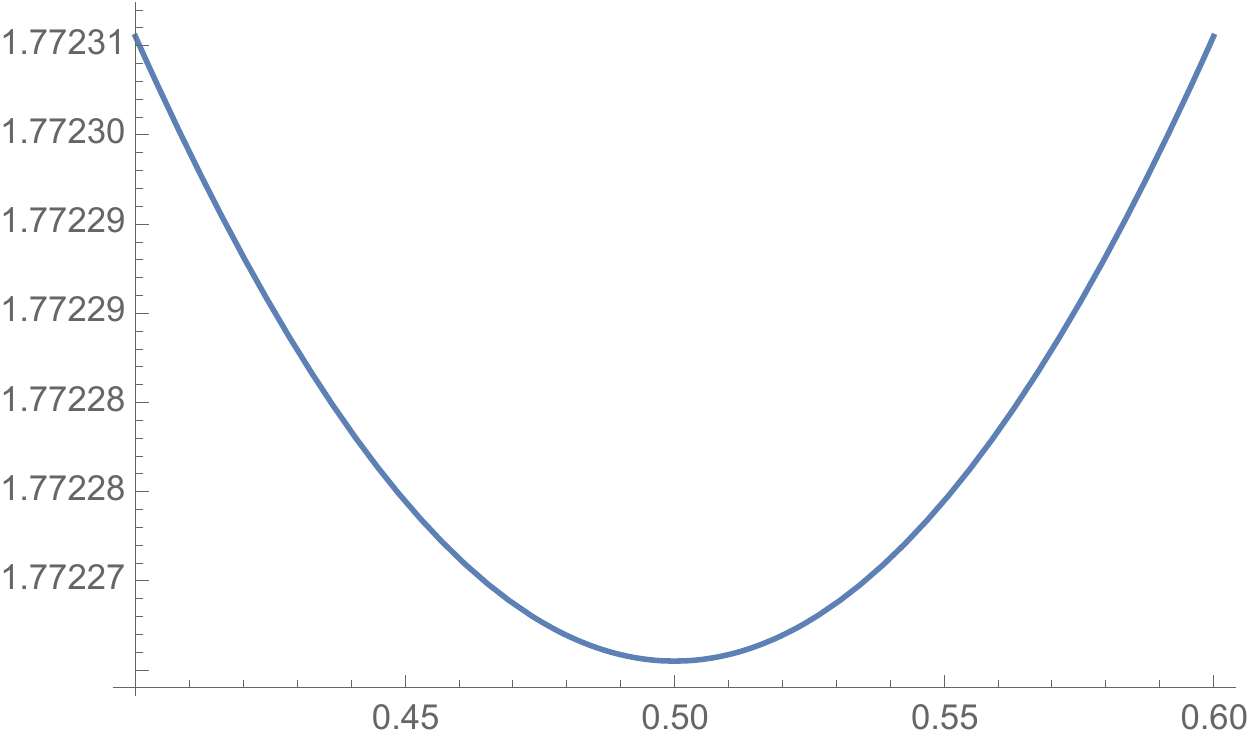}
		\caption{middle of the neck}\label{graph-turb}
	\end{center}
	
\end{figure}
	This theorem says that in the "neck" area, the Bergman kernel $\rho_{0,k+1}$ behaves very much like the function $\gamma_b$. We show the reader the graph (figure \ref{graph-turb}) of 
		the function $h(x)=\sum_{c=-\infty}^{\infty}e^{-(c-x)^2/2}$ between the lattice points, where one can tell that the amplitude is strictly $O(1)$, independent of $k$. So near the points (middle of the "neck") where $\log |z|^2=-\sqrt{k}$, the Bergman kernel behaves almost independent of $k$. We expect the Bergman kernel to display milder and milder turbulence during the transition from the "inside" to the "outside". 
		
		\

	The basic setting in \cite{AMM} is as follows:

	Let $X$ be a compact Riemann surface and let $D=\{a_1,\cdots,a_N\}\subset X$ be a finite set. We consider the punctured Riemann surface $X_D=X\backslash D$ and a Hermitian form $\omega$ on $X_D$. Let $L$ be a holomorphic line bundle of degree $d>0$ on $X$, and let $h$ be a singular Hermitian metric on $L$ such that:
	\begin{itemize}
		\item[$\alpha$)] $h$ is smooth over $X_D$, and for all $j=1,\cdots, N$, there is a trivialization of $L$ in the complex neighborhood $V_j$ of $a_j$ in $X$, with associated coordinate $z_j$ such that $|1|_h^2(z_j)=|\log(|z_j|^2)|$.
		\item[$\beta$)] There exists $\epsilon>0$ such that the (smooth) curvature $R^L$ of $h$ satisfies $iR^L\geq \epsilon \omega$ over $X_D$ and $iR^L=\omega$ on $V_j\backslash\{a_j\}$; in particular, $\omega=\omega_0$ in the local coordinate $z_j$ on $V_j$, where $\omega_0$ is the standard Poincar\'{e} metric on the punctured unit disc.
	\end{itemize}
	For each $j$, there exists a radius $R_j>0$ such that the disc $|z_j|\leq R_j$ is contained in $V_j$. For simplicity of arguments, we will
	assume that $R_j\leq 1/2$ for each $j$.

	By assumption $\beta$, we see that there exists a real number $\lambda$ such that 
	$$Ric(\omega)\geq \lambda \omega$$
	We will consider the Bergman space $\hcal_k$ consisting of holomorphic sections of $L^k$ on $X_D$ such that
	$$\int_{X_D}|s|_h^2\omega<\infty$$
	And the Bergman kernel $\rho_k$ of $\hcal_k$ is defined as
	$$\rho_k=\sum |s_i|^2_h$$
	, where $(s_i)$ is an orthonormal basis of $\hcal_k$. To distinguish notations, we will use $\rho_{0,k}$ to denote the Bergman kernel for the standard punctured disk.

\begin{thm}\cite{AMM}
	Assume that $(X,\omega,L,h)$ fulfill conditions $(\alpha)$ and $(\beta)$. Then the following estimate holds: for every integer $l,m\geq 0$, and every $\delta>0$, there exists a constant $C=C(l,m,\delta)$ such that for all $k\in \N^*$, and $z\in V_1\Cup\cdots\Cup V_N$ with the local coordinate $z_j$,
	$$|\rho_k-\rho_{0,k}|_{C^m}(z_j)\leq Ck^{-l}|\log(|z_j|^2)|^{-\delta}$$
	, where the notation $|\cdot|_{C^m}$ means
	$$|f|_{C^m}(x)=(|f|+|\nabla f|_\omega+\cdots+|(\nabla)^m f|_\omega)(x)$$ 
	, for any local function $f$ on $X$. 
\end{thm}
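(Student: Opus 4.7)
The plan is to prove the estimate by a two-sided peak-section comparison, using Hörmander's $L^2$ estimate on both $X_D$ and the model punctured disk $\D^*=\D\setminus\{0\}$. The strict positivity $iR^{L^k}\geq k\epsilon\omega$ furnished by $(\beta)$ is the basic mechanism that converts cutoff errors into superpolynomially small $L^2$ corrections, and assumption $(\beta)$ also guarantees that $(V_j\setminus\{a_j\},\omega,h)$ and $(\D^*,\omega_0,h_0)$ are isometric through the coordinate $z_j$, so sections and their norms can be transferred in either direction without distortion.

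For the lower bound $\rho_k(z_0)\geq \rho_{0,k}(z_0)(1-\eta_k)$ at a point $z_0\in V_j$, one takes an extremal section $f\in\hcal_{0,k}$ for the disk kernel at $z_0$, multiplies by a radial cutoff $\chi$ supported in $V_j$ and identically $1$ on an annular shell adapted to the scale of $z_0$, and extends $\chi f$ by zero to a smooth section of $L^k$ over $X_D$. Hörmander's theorem on $(X_D,\omega)$ then produces $u$ solving $\bar\partial u=\bar\partial(\chi f)$ with $\|u\|^2\leq (k\epsilon)^{-1}\|\bar\partial(\chi f)\|^2$, and $s:=\chi f-u\in\hcal_k$ is a true holomorphic section of $L^k$ whose value and norm at $z_0$ differ from those of $f$ only by the correction $u$. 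The Gaussian-type concentration of $f$ away from $z_0$ — precisely the mass-concentration input of \cite{SunSun} — makes $\|\bar\partial(\chi f)\|^2$ decay faster than any power of $k$, and the sub-mean-value property for the holomorphic $\chi f-u$ upgrades the $L^2$ bound on $u$ to a pointwise bound at $z_0$.

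The upper bound dualizes: a near-extremal $s\in\hcal_k$ is cut off by $\chi$ and transported, via the trivialization, to a compactly supported section on $\D^*$; Hörmander on $\D^*$ corrects it into an element of $\hcal_{0,k}$, and the extremality of $\rho_{0,k}(z_0)$ closes the loop. The factor $|\log(|z_j|^2)|^{-\delta}$, which is the real content over and above naive $k^{-l}$ decay, is squeezed out when one optimizes the cutoff annulus against the Poincaré geometry: the volume form degenerates like $|\log|z|^2|^{-2}|z|^{-2}\,idz\wedge d\bar z$ near the puncture, so choosing cutoff radii logarithmic in $|z_j(z_0)|$ gains an extra log factor per derivative of $\chi$. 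The $C^m$ version is obtained by the same argument applied to the off-diagonal reproducing kernel and its covariant derivatives in both variables, elliptic regularity converting the $L^2$ control into pointwise $C^m$ control.

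The main obstacle — and precisely the point where the present paper's finer description of $\rho_{0,k}$ can improve on this estimate — is that the peak section $f\in\hcal_{0,k}$ is \emph{not} uniformly concentrated: as Theorems \ref{lattice-inside}--\ref{middle} above make visible, $\rho_{0,k}(z)$ oscillates dramatically between lattice points near the origin and can be as small as $e^{-k/3}$. The choice of cutoff scale and the application of Hörmander must therefore be calibrated to the annular shell in which $z_0$ sits, and verifying the $|\log(|z_j|^2)|^{-\delta}$ decay amounts to propagating this fine information about $\rho_{0,k}$ through the $\bar\partial$-correction step without loss.
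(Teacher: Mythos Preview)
This theorem is \emph{cited} from \cite{AMM}, not proved in the present paper; the paper quotes it as the benchmark that Theorems~\ref{thm-riemannsurface} and~\ref{nabla} then sharpen. So there is no ``paper's own proof'' of this statement to compare against, and your proposal is really a sketch of how one might reprove the \cite{AMM} result rather than a reconstruction of anything done here.

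That said, it is worth contrasting your sketch with what the paper actually does for its \emph{own} theorems in Section~3, since that is the relevant comparison. Your plan is a two-sided extremal/peak-section argument: pick a near-extremal $f\in\hcal_{0,k}$ (resp.\ $s\in\hcal_k$), cut off, solve $\bar\partial$, and compare. The paper instead works with the \emph{entire orthonormal basis} at once: it cuts off each monomial $\tau_a z^a$ for $a\le k^{3/4}$, corrects by H\"ormander to get $\bar s_a$, runs Gram--Schmidt to obtain $s_a$, and then bounds $\sum_a(|s_a|^2-|\tau_a z^a|^2)$ term by term via explicit control of the quantities $B_a$. The remaining basis elements $s_j$ with $j>k^{3/4}$ are handled separately (Lemma~\ref{lem-others}) using orthogonality to the $\bar s_a$. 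A key ingredient absent from your outline is Lemma~\ref{linfinity}, which converts $L^2$ bounds on holomorphic sections into pointwise bounds \emph{relative to} $\rho_{0,k}$ near the puncture; this is what makes the final estimate a multiplicative one against $\rho_{0,k}$ rather than an additive one, and it replaces the sub-mean-value step you invoke.

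Your sketch is plausible in broad strokes but too vague to be called a proof: the phrases ``optimizing the cutoff annulus against the Poincar\'e geometry'' and ``propagating this fine information through the $\bar\partial$-correction step without loss'' are exactly where all the work lies, and you have not indicated how to carry them out. In particular, the $|\log|z_j|^2|^{-\delta}$ factor in the \cite{AMM} statement does not fall out of a generic H\"ormander argument; obtaining it (or, as the paper does, something much stronger) requires the kind of explicit bookkeeping in Section~3.
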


	\begin{defin}
		We define $W_k\subset \D$ to be the set consisting of the point $z$ such that
		$$|z|\leq e^{-(k-2)^{3/8}}$$
	\end{defin}

	We will use different assumptions on $k$ in order to have simpler formulas. For clarity, we collect them here.
	\begin{itemize}
		\item[1. ] $\log k+\log(-\log R)-\log (\epsilon+\lambda/k)>0$
		\item[2. ] $ k\geq (-9\log (R/2))^4$
		\item[3. ] $k\geq 23190$.
	\end{itemize}
Then we can state our theorem on punctured Riemann surfaces.
\begin{thm}\label{thm-riemannsurface}
	Around each singularity $a_i$, let $z=z_i$, $R=R_i$. Then for k satisfying assumptions 1 to 3, and for $z\in W_k$, we have
	$$	|\rho_k-\rho_{0,k}|	\leq 50(1+d)\frac{k^{-(k-5)/8}}{\sqrt{\epsilon+\lambda/k}}(-2e\log (R/2))^{k/2}(\frac{R}{2})^{k^{3/4}}\rho_{0,k}$$
\end{thm}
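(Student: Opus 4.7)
The plan is a two-sided comparison $|\rho_k(z_0)-\rho_{0,k}(z_0)|\le\mathrm{err}\cdot\rho_{0,k}(z_0)$ for $z_0\in W_k$, carried out by the standard cutoff-plus-Hörmander procedure in both directions. I would work in the chart $(V_i,z_i)$ and fix a smooth radial cutoff $\chi$ equal to $1$ on $\{|z|\le R/2\}$ and vanishing off $\{|z|\le R\}$, chosen so that $|\bar\partial\chi|_{\omega_0}$ is uniformly bounded (using the Poincaré frame). The curvature hypothesis $iR^L\ge\epsilon\omega$ together with the Ricci bound $\mathrm{Ric}(\omega)\ge\lambda\omega$ supplies a Hörmander $L^2$ estimate: for every $\bar\partial$-closed $L^k$-valued $(0,1)$-form $\alpha$ on $X_D$ there exists $u$ with $\bar\partial u=\alpha$ and $\|u\|_{L^2}^2\le\frac{1}{k(\epsilon+\lambda/k)}\|\alpha\|_{L^2}^2$; the identical estimate holds on the complete $\D^*$.

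For the lower bound I would take the punctured-disk peak section $s_0=K_{0,k}(\cdot,z_0)/\sqrt{K_{0,k}(z_0,z_0)}$ (so $\|s_0\|_{L^2(\D^*)}=1$ and $|s_0(z_0)|^2=\rho_{0,k}(z_0)$), form $\sigma=\chi s_0$ on $X_D$, and solve $\bar\partial u=\bar\partial\sigma$. The global holomorphic section $\tilde s:=\sigma-u$ obeys $|\tilde s(z_0)|^2\le\rho_k(z_0)\|\tilde s\|_{L^2}^2$, and the reproducing property $|u(z_0)|^2\le\rho_k(z_0)\|u\|_{L^2}^2$ combined with the triangle inequality at $z_0$ gives $\rho_k(z_0)\ge\rho_{0,k}(z_0)(1-C\|u\|_{L^2}^2/\rho_{0,k}(z_0))$. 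For the reverse inequality I would run the symmetric argument: take the global peak section $s$ at $z_0$, cut off to $\chi s$, solve $\bar\partial v=\bar\partial(\chi s)$ on $\D^*$, and deduce $\rho_k(z_0)\le\rho_{0,k}(z_0)(1+C\|v\|_{L^2}^2/\rho_{0,k}(z_0))$.

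The quantitative heart is the annular mass estimate $\int_{\mathcal A}|s|^2\omega\ll\rho_{0,k}(z_0)$ for any peak section concentrated at $z_0\in W_k$, where $\mathcal A=\{R/2\le|z|\le R\}$ is the support of $\bar\partial\chi$. Expanding in monomials $s=\sum_{n\ge 1}c_nz^n$ with $\|z^n\|^2=(k-2)!/n^{k-1}$, the results of Section 2 show that $|c_n|^2\|z^n\|^2$ is Gaussian-concentrated at $n_\ast=k/(-\log|z_0|^2)\le\tfrac12 k^{5/8}$ with width $\sqrt{k}/t_0\le\tfrac12 k^{1/8}$. I would split the sum at a threshold $n_0\approx k^{3/4}/2$, far beyond the Gaussian window: the low modes $n\le n_0$ contribute $|z|^{2n}\le(R/2)^{k^{3/4}}$ on $\mathcal A$ at the most distant mode, while the tail $n>n_0$ is super-exponentially small by Gaussian concentration. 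The Poincaré weight $(-\log|z|^2)^{k-2}$ on $\mathcal A$ is absorbed as $(-2\log(R/2))^{k/2}$ after re-packaging (this is where the $e$ in $(-2e\log(R/2))^{k/2}$ originates, via Stirling), and the factorial $1/(k-2)!$ combined with Robbins' bounds produces $k^{-(k-5)/8}$. The $(1+d)$ factor enters on the upper-bound side as a section-counting correction: the portion of $\|s\|_{L^2(X_D)}^2$ supported away from $V_i$ is bounded in terms of $\dim H^0(X,L^k)$, which is $O(1+d)$ per unit $k$ after peeling off the $k$-dependence.

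The main technical obstacle is not any one estimate but the combinatorial bookkeeping of constants: the truncation level $n_0=k^{3/4}/2$ has to be tuned precisely so that the Gaussian tail of $|c_n|^2\|z^n\|^2$ at $n>n_0$ is dominated by the $(R/2)^{k^{3/4}}$ cutoff-mode contribution, and so that after the factorial and Poincaré-weight factors are threaded through the Hörmander estimate, the three displayed exponential factors $k^{-(k-5)/8}$, $(-2e\log(R/2))^{k/2}$, $(R/2)^{k^{3/4}}$ emerge cleanly with a single numerical constant $50$; assumptions $1$--$3$ on $k$ are used to absorb all lower-order error terms into that constant.
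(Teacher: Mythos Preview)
Your outline diverges from the paper's argument, and the upper-bound half contains a genuine gap.

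For the inequality $\rho_k(z_0)\le(1+\mathrm{err})\rho_{0,k}(z_0)$ you take the \emph{global} peak section $s\in\hcal_k$, cut it off, and solve $\bar\partial v=\bar\partial(\chi s)$ on $\D^*$. The controlling quantity is then $\int_{\mathcal A}|s|^2\omega$, and you assert that ``the results of Section~2 show that $|c_n|^2\|z^n\|^2$ is Gaussian-concentrated at $n_\ast=k/(-\log|z_0|^2)$.'' Section~2 proves this only for the \emph{model} peak section $K_{0,k}(\cdot,z_0)$, whose Fourier coefficients are explicit: $|c_n|^2\|z^n\|^2=|\tau_n z_0^n|_h^2/\rho_{0,k}(z_0)$. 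For the global peak section you know only $\|s\|_{L^2(X_D)}=1$; nothing in Section~2 (or anywhere prior) constrains its mode profile or its annular mass. Without that claim, the crude bound $\int_{\mathcal A}|s|^2\le 1$ feeds through H\"ormander to give at best $\rho_k\le(1+O(k^{-1/2}))\rho_{0,k}$, which is exponentially weaker than the stated $k^{-(k-5)/8}(\ldots)$. Establishing mode concentration for the global peak section is essentially equivalent to off-diagonal Bergman-kernel decay between $W_k$ and $\mathcal A$, which is not lighter than the theorem itself.

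The paper bypasses peak sections entirely. It globalizes each monomial $\tau_a z^a$ for $a\le a_2=k^{3/4}$ by cutoff-plus-H\"ormander, Gram--Schmidt orthonormalizes to obtain $s_1,\dots,s_{a_2}$, and completes to an orthonormal basis of $\hcal_k$. The comparison $\sum|s_a|^2$ versus $\sum|\tau_a z^a|^2$ is then termwise for $a\le a_2$, using the $L^2\!\to\!L^\infty$ Lemma~\ref{linfinity}. The upper bound is handled by Lemma~\ref{lem-others}: each remaining basis element $s_j$ with $j>a_2$ is \emph{by construction orthogonal} to every $\chi\tau_a z^a-\beta_a$, which forces its low Fourier coefficients to satisfy $|c_a|^2\le 2B_a$; combined with the rapid decay of $|\tau_a z^a|$ on $W_k$ for $a>a_2$ this yields $|s_j|^2\le 18a_2B_{a_2}\rho_{0,k}$. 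Summing over the $O(dk)$ remaining basis elements and comparing to the main term $50a_2B_{a_2}^{1/2}$ (using $B_{a_2}\le k^{-2}$) produces the factor $(1+d)$. This orthogonality-to-the-constructed-family mechanism is precisely what substitutes for the unavailable mode concentration of an arbitrary global section.
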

Without the consideration of the derivatives, our result is an obvious improvement of that of \cite{AMM}. One can see that our constant decays much faster than $e^{-k}$. Also, in order for our estimates to be valid, we do not need our $k$ to be too large. In fact, given the numbers $\epsilon, 
\lambda, R$, one can quickly decide the bound of $k$ for the estimate to hold. This kind of effective bounds is very rare in this field. Also, since we have very detailed understanding of $\rho_{0,k}$ in sections 2 and 3, this theorem says that both the absolute and relative errors are very small. Indeed, theorem \ref{lattice-inside} says that when $-\log |z|^2\geq k-1$, $\rho_{0,k}$ behaves almost the same as $\frac{(-\log |z|^2)^k|z|^2}{2\pi(k-2)!}$, which of course decays faster than $(-\log|z|^2)^{-\delta}$ for any $\delta<\infty$. Also the seemingly restrictive condition that $z\in W_k$ actually is good enough. The reason is that outside $W_k$, the Bergman kernel already have good asymptotic as that in the case of smooth metric over a compact manifold, which can be proved using Lu's method of peak section, using the fact that outside $W_k$ the injective radius of $X_D$ is big enough as explained in \cite{SunSun}. We should assure the reader that $W_k$ can be made bigger if one wish, by modifying the indices in our proof.

We then continue to show the following theorem for the first derivative of the difference $\rho_k-\rho_{0,k}$. 

\begin{thm}\label{nabla}
	With the notations and assumptions in theorem \ref{thm-riemannsurface} and  $2k^{3/4}\geq \frac{\log(\epsilon+\lambda/k)}{\log (R/2)}$, we have
	$$	|\nabla (\rho_k-\rho_{0,k})|\leq (1+\frac{d}{k^2})25\sqrt{2}t\frac{k^{-(k-10)/8}}{\sqrt{\epsilon+\lambda/k}}(-2e\log (R/2))^{k/2}(\frac{R}{2})^{k^{3/4}}\rho_{0,k} $$
	, where $t=-\log |z|^2$
\end{thm}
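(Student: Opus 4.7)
The plan is to differentiate the argument used for Theorem \ref{thm-riemannsurface} section-by-section. With respect to the Poincar\'e metric $\omega_0$, one has $|\nabla f|_{\omega_0}\asymp |z|t\,|\partial_z f|$ with $t=-\log|z|^2$, so the factor $t$ in the stated bound arises naturally from converting Poincar\'e gradients into Euclidean $\partial_z$-derivatives (the extra $|z|$ is absorbed into $\rho_{0,k}$, which carries a $|z|^2$ factor deep inside $W_k$ by Theorem \ref{lattice-inside}). It thus suffices to control $|\partial_z(\rho_k-\rho_{0,k})|$ on $W_k$ with a weight matching the claim.

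In the proof of Theorem \ref{thm-riemannsurface} the difference $\rho_k-\rho_{0,k}$ is built from sections of the form $\tilde e_k^m=\chi e_k^m+\eta_k^m$, where $e_k^m$ is the $m$-th canonical monomial on the punctured disk, $\chi$ is a plateau cutoff supported away from the puncture but inside $W_k$, and $\eta_k^m$ is the $L^2$-minimal H\"ormander correction making $\tilde e_k^m$ holomorphic on $X$. Applying $\partial_z$ to $|s|_{h_k}^2$ yields
\[\partial_z |s|_{h_k}^2=\langle\partial_z s,s\rangle_{h_k}+|s|_{h_k}^2\,\partial_z\log h_k,\]
with $|\partial_z\log h_k|=k/(|z|t)$ in the disk chart. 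The metric-derivative term therefore multiplies the pointwise bound of Theorem \ref{thm-riemannsurface} by $k/(|z|t)$; after insertion of the Poincar\'e weight $|z|t$ this produces precisely the factor $t$ in the statement at the cost of a single power of $k$, which is absorbed into the exponent shift.

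The first summand requires pointwise control of $\partial_z\tilde e_k^m$. For the cutoff-monomial piece $\partial_z(\chi e_k^m)=(\partial_z\chi)e_k^m+\chi\,m z^{m-1}$, one gains only polynomial factors in $k$ on $W_k$, which are absorbed by the super-exponential decay already present in Theorem \ref{thm-riemannsurface}. For the H\"ormander correction $\eta_k^m$, which is only $L^2$-bounded a priori, I would differentiate the \emph{holomorphic} section $\tilde e_k^m$ itself rather than $\eta_k^m$, and apply the mean-value inequality on a Euclidean ball $B(z,r)$ of radius $r\asymp k^{-5/8}$ contained in $W_k$, on which the smooth weight is uniformly comparable to its value at $z$. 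This yields
\[|\partial_z\tilde e_k^m(z)|_{h_k}\leq C r^{-1}\|\tilde e_k^m\|_{L^2(B(z,r),h_k)},\]
and the right-hand side is already under control in the proof of Theorem \ref{thm-riemannsurface}. The loss $r^{-1}=k^{5/8}$ is exactly what turns $k^{-(k-5)/8}$ into $k^{-(k-10)/8}$.

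The main obstacle is this passage from an $L^2$ correction to a pointwise derivative bound; the remedy above---differentiating $\tilde e_k^m$ as a whole before one isolates $\eta_k^m$---sidesteps it by reducing the question to Cauchy's estimate on a holomorphic section. Once this is in place, the rest of the argument (Cauchy-Schwarz across the nearly-orthonormal basis and combination with the pointwise description of $\rho_{0,k}$ from Theorems \ref{lattice-inside} and \ref{others-inside}) reassembles into the claimed gradient bound; the extra $(1+d/k^2)$ factor tracks, as in the proof of Theorem \ref{thm-riemannsurface}, the contribution of the finite-dimensional correction coming from $\deg L = d$.
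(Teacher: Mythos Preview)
Your approach has a genuine gap in the passage from $L^2$ bounds to pointwise derivative bounds. You propose to control $\partial_z\tilde e_k^m$ by Cauchy's estimate on a Euclidean ball $B(z,r)$ with $r\asymp k^{-5/8}$, but on $W_k$ one has $|z|\le e^{-(k-2)^{3/8}}$, which is far smaller than any negative power of $k$; a Euclidean ball of radius $k^{-5/8}$ around such a point contains the puncture and leaves $W_k$ entirely. If you shrink the ball so that it fits (necessarily $r\lesssim|z|$), Cauchy's estimate costs $r^{-1}\gtrsim|z|^{-1}$, an exponentially large factor that destroys the inequality. Moreover the weight $t^k$ varies by enormous factors across any ball of radius comparable to $|z|$, so ``smooth weight uniformly comparable to its value at $z$'' has no content at this scale.

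The paper never uses a mean-value inequality. It starts from the explicit identity
\[
|\nabla|s|^2_h|=\sqrt{2}\,t^k\,|f|\,\bigl|tzf'-kf\bigr|,\qquad s=f\,e_L^k,
\]
and expands $tzf'-kf=\sum_i(it-k)c_i\tau_iz^i$. The point is that the scale-invariant operator $z\partial_z$ acts on monomials by $z\partial_z z^a=az^a$, so differentiation only introduces the polynomial coefficient $(it-k)$. The $L^2\!\to\!L^\infty$ machinery of Lemma~\ref{linfinity} is then re-run on these weighted series: this is what yields $|g_a'z|\le k\sqrt{500aB_aR_k}$ for $a\le k^{3/4}$, and the analogous bound in Lemma~\ref{nabla-others} for $j>k^{3/4}$ comes from the same three-range splitting as Lemma~\ref{lem-others} with the extra $(it-k)$ factor carried along. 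The resulting additional factor of $k$ (from $|it-k|\le it\le k^{3/4}t$ summed and Cauchy--Schwarzed) is exactly what shifts $k^{-(k-5)/8}$ to $k^{-(k-10)/8}$; your accounting via $r^{-1}=k^{5/8}$ happens to match numerically but is mechanistically unrelated, and in fact is inconsistent with your own claim that the metric-derivative term already costs a full power of $k$.
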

This result is still stronger than that in \cite{AMM} as explained for theorem \ref{thm-riemannsurface}. Our purpose to prove this theorem is to show the reader that, if desired, one can get estimates with high precisions for higher order derivatives of $\rho_k-\rho_{0,k}$, and to do so, one just need to follow the same route as we prove this theorem, and one will only have to deal with combinatorial problems in order to give general formulas.

Besides these main results, we would like also to mention lemma \ref{linfinity}, which uses $L^2$ norm to bound point-wise norm for holomorphic sections, near the singularity. We hope that the lemma itself can be found useful by the readers.


	The structure of this article is as follows. The first section is devoted to the case of the standard the punctured disk, where we show that we can describe the Bergman kernel very well. The last section is devoted to the general case of punctured Riemann surfaces, where we prove theorem \ref{thm-riemannsurface} and theorem \ref{nabla}.

	\textbf{Acknowledgements.} The author would like to thank Professor Song Sun for so many nonstop insightful discussions. The author would also like to thank Professor Bernard Shiffman for his continuous and unconditional support.
	
	\section{punctured disc}

The standard complete Poincar\'{e} metric on the punctured disk $\mathbb D^*=\{z\in \C||z|\leq 1\}$ is given by the \kahler form
	\begin{equation} \label{eqn2-1}
	\omega=\frac{i dz\wedge d\bar{z}}{|z|^2(\log \frac{1}{|z|^2})^2}.
	\end{equation}
	The corresponding K\"ahler potential is $\Phi=-\log \log \frac{1}{|z|^2}$, and the scalar curvature of $\omega$ is $-2$. For $k\geq 1$, we let $\hcal_{0,k}$ be the Bergman space of holomorphic functions $f$ on $\mathbb D^*$ such that 
	$$\|f\|_k^2:=\int_{\mathbb D^*} |f|^2 e^{-k\Phi}\omega<\infty. $$
	On $\hcal_{0,k}$ we denote by $\langle\cdot, \cdot\rangle_k$ the corresponding Hermitian inner product.

	\begin{lem}\label{integral} For any $a\geq 1$, we have $z^a\in \hcal_{0,k}$ and 
		\begin{equation} \label{eqn2-2}
		\langle z^a, z^b\rangle_k=\frac{2\pi(k-2)!}{ a^{k-1}}\delta_{ab}. 
		\end{equation}
		In particular, the functions $\{(\frac{ a^{k-1}}{2\pi(k-2)!})^{1/2}z^a|a\geq 1\}$  form an orthonormal basis of $\hcal_{k}$
	\end{lem}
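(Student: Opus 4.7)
The plan is a direct computation. From the definitions, $e^{-k\Phi}=(\log\tfrac{1}{|z|^2})^k$, and multiplying by $\omega$ from \eqref{eqn2-1} the weighted volume form collapses to
\[
e^{-k\Phi}\,\omega \;=\; \frac{\bigl(\log\tfrac{1}{|z|^2}\bigr)^{k-2}}{|z|^2}\, i\,dz\wedge d\bar z,
\]
so the two powers of $\log\frac{1}{|z|^2}$ coming from $\omega$ in the denominator cancel against two of the $k$ powers coming from $e^{-k\Phi}$. This cancellation is what makes the integral elementary.

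Next I would pass to polar coordinates $z=re^{i\theta}$, using $i\,dz\wedge d\bar z=2r\,dr\,d\theta$, and write
\[
\langle z^a,z^b\rangle_k \;=\; 2\int_0^1\!\!\int_0^{2\pi} r^{a+b-1}\bigl(\log\tfrac{1}{r^2}\bigr)^{k-2} e^{i(a-b)\theta}\, d\theta\, dr.
\]
The angular integral produces $2\pi\,\delta_{ab}$, which gives orthogonality immediately. For the diagonal term $a=b$, I would perform the substitution $u=-2\log r$, so that $r=e^{-u/2}$, $dr=-\tfrac12 e^{-u/2}\,du$, and the radial integral becomes
\[
4\pi\int_0^1 r^{2a-1}\bigl(\log\tfrac{1}{r^2}\bigr)^{k-2}dr \;=\; 2\pi\int_0^\infty u^{k-2} e^{-au}\,du \;=\; \frac{2\pi(k-2)!}{a^{k-1}},
\]
the last equality being the standard Gamma integral (and in particular finite since $a\geq 1$ and $k\geq 1$; convergence at $u=0$ is ensured by $k-2\geq -1$, i.e.\ integrability of $u^{k-2}$ near $0$ combined with the rapid decay of $e^{-au}$). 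This simultaneously shows $z^a\in\hcal_{0,k}$ and gives the stated norm. The orthonormal basis statement is then an immediate rescaling.

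There is essentially no obstacle: the only place one needs to be slightly careful is checking integrability near the puncture (i.e.\ at $u=\infty$), but the Gaussian-type decay $e^{-au}$ with $a\geq 1$ handles that trivially, and integrability near $|z|=1$ reduces to checking $u^{k-2}$ at $u=0$, which is fine for every $k\geq 1$. The key conceptual point worth flagging in the write-up is the cancellation in the first display above, since this is exactly what renders the Poincaré weight on the punctured disk so tractable and underlies all later estimates in the paper.
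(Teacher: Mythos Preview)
Your computation of the inner products is correct and is the natural direct calculation; the paper itself states this lemma without proof, so there is nothing to compare against. One small slip: you claim convergence at $u=0$ holds for $k-2\geq -1$, but $u^{-1}$ is not integrable near $0$; you need $k-2>-1$, i.e.\ $k\geq 2$ (which is in any case implicit in the appearance of $(k-2)!$ in the statement).

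More substantively, you do not address the \emph{completeness} part of the lemma: it asserts that the normalized monomials form an orthonormal \emph{basis} of $\hcal_{0,k}$, not merely an orthonormal system, and ``immediate rescaling'' does not cover this. A holomorphic function on $\D^*$ has a Laurent expansion $f=\sum_{n\in\Z}c_nz^n$, and you must rule out the coefficients with $n\leq 0$. This is routine: Parseval in the angular variable gives
\[
\|f\|_k^2=\sum_{n\in\Z}|c_n|^2\cdot 2\pi\int_0^\infty u^{k-2}e^{-nu}\,du,
\]
and for $n\leq 0$ the integral diverges at $u=\infty$, forcing $c_n=0$. You should include this sentence in the write-up.
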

	
the Bergman kernel of $\hcal_{0,k}$ is given by 
\begin{equation} 
\rho_{0,k}=\frac{(\log 1/|z|^2)^k}{2\pi(k-2)!}\sum_{a=1}^{\infty}a^{k-1}|z|^{2a}.
\end{equation}

We will use the notations $x=|z|^2$, $t=-\log x$, so
$$\rho_{0,k+1}=\frac{t^{k+1}}{2\pi(k-1)!}\sum_{a=1}^{\infty}a^{k}x^a$$


For each integer $b\ge 1$, we can write 
$$\sum_{a=1}^{\infty}a^{k}x^a=b^k x^b\sum_{c=1-b}^{\infty}(1+\frac{c}{b})^k e^{-ct}$$

We will denote by $$f_b(t)=\sum_{c=1-b}^{\infty}(1+\frac{c}{b})^k e^{-ct}$$
Let $d=\frac{bt}{k}$, 
Then $$t^{k+1}b^kx^b=tk^ke^{-k}(de^{1-d})^k$$
We define  $g(d)=de^{1-d}$, then 
\begin{equation}\label{rk}
\rho_{0,k+1}=\frac{tk^ke^{-k}}{2\pi (k-1)!}(g(d))^kf_b(t)
\end{equation}

We look at the exponent of a general summand of $f_b$, which is  $h_t(c)=k\log(1+\frac{c}{b})-ct$. Clearly $h_t$ is a concave function of $c$. When $t=\frac{k}{b}$,  the only maximum of $h_t$ is attained at $c=0$. The derivatives are
$h_t'(1)=\frac{-k}{b(b+1)}$ and $h_t'(-1)=\frac{k}{b(b-1)}$. So we can use power series to estimate the summation, and have the following two lemmas:
\begin{lem}
	$$0<f_1(k/b)-1<2^ke^{-k}/(1-e^{-k/2})$$
\end{lem}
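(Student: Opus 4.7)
The plan is as follows. Writing out the definition of $f_b$ at $b=1$ gives
$$f_1(t)=\sum_{c=0}^{\infty}(1+c)^k e^{-ct},$$
so the $c=0$ summand already contributes $1$. The tail
$$f_1(k/b)-1=\sum_{c=1}^{\infty}(c+1)^k e^{-ck/b}$$
is a sum of strictly positive terms, which immediately yields the lower bound $f_1(k/b)-1>0$.

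For the upper bound I would compare termwise with a geometric series. The workhorse is the elementary scalar inequality
$$\frac{c+1}{2}\le e^{(c-1)/2}\qquad \text{for every integer } c\ge 1,$$
which holds with equality at $c=1$ and extends to $c\ge 2$ by comparing derivatives: the left-hand side has slope $\tfrac12$ while the right-hand side has slope $\tfrac12 e^{(c-1)/2}\ge \tfrac12$. Raising to the $k$-th power and multiplying by $e^{-ck/b}$,
$$(c+1)^k e^{-ck/b}\le 2^k\, e^{(c-1)k/2-ck/b}.$$
At the relevant lattice endpoint $b=1$ (i.e.\ $t=k$, corresponding to the boundary $|z|=e^{-k/2}$ of the first bullet of Theorem~\ref{lattice-inside}), the exponent collapses to $-k/2-ck/2$, and the tail becomes the geometric series
$$2^k\sum_{c=1}^{\infty}e^{-k/2-ck/2}=\frac{2^k e^{-k}}{1-e^{-k/2}},$$
which is exactly the claimed bound.

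The only real work is identifying a sharp enough scalar inequality so that the resulting geometric-series estimate yields the advertised constant $2^k e^{-k}$ rather than something weaker; once $(c+1)/2\le e^{(c-1)/2}$ is in hand the remaining manipulation is purely routine. I do not foresee any serious obstacle.
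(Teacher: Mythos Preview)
Your argument is correct and is essentially the same as the paper's. The paper's hint is to use concavity of $h_t(c)=k\log(1+c)-ct$ together with $h_k'(1)=-k/2$, which amounts to bounding by the tangent line at $c=1$; your scalar inequality $(c+1)/2\le e^{(c-1)/2}$ is exactly that tangent-line bound $\log(1+c)\le \log 2+\tfrac{c-1}{2}$, and the remaining geometric-series step is identical.
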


\begin{lem}For $b\geq 2$, an integer, we have
	$$0<f_b(k)-1<\frac{(1+1/b)^ke^{-k/b}}{1-e^{-\frac{k}{b(b-1)}}}+\frac{(1-1/b)^ke^{k/b}}{1-e^{-\frac{k}{b(b-1)}}}$$
\end{lem}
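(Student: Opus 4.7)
The plan is to bound $f_b(k/b)-1 = \sum_{c \neq 0}(1+c/b)^k e^{-ck/b}$ by splitting it into a positive-$c$ tail ($c \geq 1$) and a finite negative-$c$ tail ($c = -1, \dots, -(b-1)$), and controlling each by a geometric series anchored at the term nearest to $c=0$. (Although the statement is written as $f_b(k)-1$, the numerators $e^{\pm k/b}$ are the $c=\pm 1$ summands at $t=k/b$, which is the point $h_t$ peaks at $c=0$ -- the case relevant for Theorem \ref{lattice-inside}.) The lower bound $0 < f_b(k/b)-1$ is immediate since every summand with $c \ne 0$ is strictly positive.

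The key tool is to write each summand as $e^{k\phi(c)}$ with $\phi(c) = \log(1+c/b) - c/b$. A direct computation gives $\phi(0)=0$, $\phi'(c) = -c/[b(b+c)]$, and $\phi''(c) = -1/(b+c)^2 < 0$, so $\phi$ is strictly concave on $(-b,\infty)$ with unique maximum at $c=0$. In particular $\phi'(1) = -1/[b(b+1)]$ and $\phi'(-1) = 1/[b(b-1)]$, which are precisely the rates that will appear in the geometric denominators.

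For the positive tail, concavity yields the tangent-line inequality $\phi(c) \le \phi(1) + \phi'(1)(c-1)$ for every $c \ge 1$; exponentiating by $k$ and summing the resulting geometric series gives
$$\sum_{c=1}^{\infty}(1+c/b)^k e^{-ck/b}\ \le\ \frac{(1+1/b)^k e^{-k/b}}{1-e^{-k/[b(b+1)]}}.$$
For the negative tail, the analogous tangent-line bound at $c=-1$ gives $\phi(-j) \le \phi(-1) - \phi'(-1)(j-1)$ for $j = 1, \dots, b-1$, so the finite geometric sum is dominated by
$$\sum_{j=1}^{b-1}(1-j/b)^k e^{jk/b}\ \le\ \frac{(1-1/b)^k e^{k/b}}{1-e^{-k/[b(b-1)]}}.$$
Adding the two bounds yields an estimate of exactly the shape claimed, with the natural denominator $1-e^{-k/[b(b+1)]}$ replacing $1-e^{-k/[b(b-1)]}$ in the first fraction; under the hypothesis $k/[b(b+1)] \ge \log 2$ that is imposed when the lemma is actually used, both geometric ratios are at most $1/2$, which is all that is needed downstream.

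The argument is elementary and I foresee no serious obstacle: the only work is bookkeeping to verify the tangent-line estimates on the relevant ranges and to carry out the two geometric summations correctly. There is no need for a sharper estimate of the summands themselves, because the concavity bound is already tight enough at the anchor points $c=\pm 1$ to produce the leading $(1\pm 1/b)^k e^{\mp k/b}$ factors exactly.
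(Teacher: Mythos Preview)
Your proposal is correct and follows exactly the approach the paper intends: the paper's own ``proof'' consists of the single remark that $h_t(c)=k\log(1+c/b)-ct$ is concave with maximum at $c=0$ when $t=k/b$, that $h_t'(1)=-k/[b(b+1)]$ and $h_t'(-1)=k/[b(b-1)]$, and that one then ``use[s] power series to estimate the summation''---which is precisely your tangent-line-plus-geometric-series argument. You have also correctly flagged that the natural outcome of this argument puts $1-e^{-k/[b(b+1)]}$ in the first denominator rather than $1-e^{-k/[b(b-1)]}$; the paper's stated form appears to be a typo, and as you note it is immaterial for the application to Theorem~\ref{lattice-inside} since $k/[b(b+1)]\ge\log 2$ there.
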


One can see that this lemma is particularly useful when $b$ is small, because then $e^{-\frac{k}{b(b+1)}}$ is very small. At the lattice points, where $t=\frac{k}{b}$, we have $d=1$. So we have proved theorem \ref{lattice-inside}. Notice that the number 79 is to make the condition $\frac{k}{2b^2}-\frac{k}{3b^3}>3\log k$ not empty. 

Theorem \ref{lattice-inside} also tells us that between the lattice points, we can use only two terms with the consequence of a small error. Namely for $t\in [\frac{k}{a+1},\frac{k}{a}]$, we can use $t^{k+1}(a^ke^{-at}+(a+1)^ke^{-(a+1)t})$ to approximate $\rho_{0,k}$, with error less than 
$E_a+E_{a+1}$
, where $E_a=2[(1+\frac{1}{a})^ke^{-k/a}]$. 

We denote by $h_a(t)=t^{k+1}a^ke^{-at}$, then $h_a'(t)=a^kt^ke^{-at}(k+1-at)$ and 
$$h_a''(t)=a^kt^{k-1}e^{-at}[k(k+1)-(2k+2)at+(at)^2]$$
So we have:
\begin{lem}
	$h_a''(t)>0$ when $at>k+2$ or $at<k$
\end{lem}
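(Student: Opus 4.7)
The sign of $h_a''(t)$ is governed entirely by the bracketed quadratic factor, since the prefactor $a^k t^{k-1}e^{-at}$ is strictly positive for $t>0$ and $a\geq 1$. So the plan is to reduce the lemma to analysing the single-variable quadratic
$$Q(s) := s^2 - 2(k+1)s + k(k+1)$$
in the variable $s=at$, and identifying the half-lines on which $Q(s)>0$.

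My approach is the standard quadratic-formula computation. The discriminant is $(2k+2)^2 - 4k(k+1) = 4(k+1)(k+1-k) = 4(k+1) > 0$, so $Q$ has two real roots $s_\pm = (k+1)\pm\sqrt{k+1}$. Since $Q$ is an upward-opening parabola, $Q(s)>0$ precisely when $s < s_-$ or $s > s_+$, while $Q$ is non-positive on the band $[s_-,s_+]$ centred at the vertex $s=k+1$.

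The last step is a one-line comparison between the stated ranges $at<k$ and $at>k+2$ and the roots $s_\pm$ just computed, which reduces to comparing $\sqrt{k+1}$ with $1$. There is no genuine obstacle here; the entire argument is a routine factorisation. The substance of the lemma lies not in its proof but in its intended use later on: it pins down intervals of $t$ where $h_a$ is strictly convex, so that when $\rho_{0,k+1}$ is approximated between consecutive lattice points by a two-term sum $h_a(t)+h_{a+1}(t)$ (as suggested by the estimate $E_a+E_{a+1}$ just introduced), one has convexity of the approximant outside a narrow transition band around $at=k+1$, and hence a unique minimum between the lattice points — matching the statement of Theorem \ref{others-inside}.
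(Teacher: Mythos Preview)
Your reduction to the quadratic $Q(s)=s^{2}-2(k+1)s+k(k+1)$ in the variable $s=at$, and your computation of its roots $s_{\pm}=(k+1)\pm\sqrt{k+1}$, are both correct; this is also exactly what the paper has in mind, since the lemma is asserted immediately after the formula for $h_a''$ with no further argument. The gap is in your last step. You say the comparison ``reduces to comparing $\sqrt{k+1}$ with $1$'' and that there is ``no genuine obstacle'' --- but the comparison goes the wrong way. For the stated ranges to force $Q>0$ one would need $k\le s_{-}$ and $k+2\ge s_{+}$, i.e.\ $\sqrt{k+1}\le 1$, which fails for every $k\ge 1$. Concretely, a direct evaluation gives
\[
Q(k)=Q(k+2)=-k<0,
\]
so there are values of $at$ just below $k$ (and just above $k+2$) at which $h_a''(t)<0$.

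In short, the lemma as stated is false: the genuine positivity region is $at<(k+1)-\sqrt{k+1}$ or $at>(k+1)+\sqrt{k+1}$, a band of half-width $\sqrt{k+1}$ about $k+1$ rather than half-width $1$. This is a slip in the paper itself, not only in your write-up. The downstream use --- showing $(h_a+h_{a+1})''>0$ on $(\tfrac{k}{a+1},\tfrac{k}{a})$ under the hypothesis $a\le \sqrt{k}/\log k$ --- therefore cannot be read off from the lemma as written and has to be re-argued with the corrected thresholds (or by estimating $h_a''+h_{a+1}''$ directly on the enlarged transition sub-interval).
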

We assume $a\leq \sqrt{k}/\log k$, then we can calculate
\begin{itemize}
\item[$\bullet$]When $t=k/a$, $(h_a+h_{a+1})'>0$.
\item[$\bullet$] When $t=k/(a+1)$,  $(h_a+h_{a+1})'<0$.
\item[$\bullet$]When $t\in (\frac{k}{a+1},\frac{k+2}{a+1})$, $(h_a+h_{a+1})''>0$. So for
$t\in (\frac{k}{a+1},\frac{k}{a})$, $(h_a+h_{a+1})''>0$. So $h_a+h_{a+1}$ has exactly one minimum in this interval.
\end{itemize}
Then we check that when $t=\frac{k+2}{a+1}$, $(h_a+h_{a+1})'<0$ when $k\geq 55$. So the minimum lies in the interval $ (\frac{k+2}{a+1},\frac{k}{a})$.

It seems complicated to find the minimum, but we can estimate the value of $h_a+h_{a+1}$ when $t=k\log \frac{a+1}{a}$. We denote by $s_a=k\log \frac{a+1}{a}$
 Then we calculate $\log [(\frac{s_a}{k/a})^{k+1}e^{-as_a+k}]$. We can expand it as a Taylor series of $y=\frac{1}{a}$ to get
$$k(-\frac{y^2}{8}+\frac{y^3}{8}-\frac{45 y^4}{576}+O(y^5))$$
So for $a\geq 2$, we see that 
$$(\frac{s_a}{k/a})^{k+1}e^{-as_a+k}\leq e^{-\frac{k}{16a^2}}$$
Similarly, we can estimate the ratio $(\frac{s_a}{k/(a+1)})^{k+1}e^{-(a+1)s_a+k}$, to see that the the ratio 
$$(\frac{s_a}{k/(a+1)})^{k+1}e^{-(a+1)s_a+k}\leq e^{-\frac{k}{16a^2}}$$
So for $a\geq 2$, the minimum of $h_a+h_{a+1}$ for $t\in (\frac{k}{a+1},\frac{k}{a})$, is less than $e^{-k/(66a^2)}(h_a(\frac{k}{a})+h_{a+1}(\frac{k}{a+1}))$. 

For $a=1$, we can easily see that the mimimums of the two ratios are both less than $e^{-0.059k}$.
So we have
\begin{prop}
	Let $a\geq 1$ be an integer such that $a\leq \sqrt{k}/\log k$, $t=-\log |z|^2$ and $k\geq 55$. Then for $t\in (\frac{k}{a+1},\frac{k}{a})$, $h_a(t)+h_{a+1}(t)$ is convex, and has exactly one minimum, which is less than 
	$$e^{-k/(17a^2)}(h_a(\frac{k}{a})+h_{a+1}(\frac{k}{a+1}))$$
\end{prop}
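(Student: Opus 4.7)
The proof essentially assembles the piecewise observations already recorded in the preceding discussion. Three claims must be verified: convexity of $h_a+h_{a+1}$ on $(\frac{k}{a+1},\frac{k}{a})$, uniqueness of the minimum in this interval, and the quantitative upper bound. The strategy is to handle convexity and uniqueness by stitching together the sign information on $h_a''$ and $(h_a+h_{a+1})'$ at the endpoints, and then to handle the quantitative bound by evaluating $h_a+h_{a+1}$ at the carefully chosen interior point $s_a:=k\log\frac{a+1}{a}$.

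For convexity, the factorization $h_a''(t)=a^k t^{k-1}e^{-at}[k(k+1)-(2k+2)at+(at)^2]$ gives $h_a''>0$ precisely when $at\notin[k,k+2]$. Since $at<k$ throughout $(\frac{k}{a+1},\frac{k}{a})$, one has $h_a''>0$ there. The argument $(a+1)t$ lies in $(k,k+k/a)$, so $h_{a+1}''>0$ holds automatically on $(\frac{k+2}{a+1},\frac{k}{a})$; on the complementary strip $(\frac{k}{a+1},\frac{k+2}{a+1})$ one combines the two bracketed quadratics directly to obtain $(h_a+h_{a+1})''>0$, as already recorded. Together with the signs $(h_a+h_{a+1})'(k/(a+1))<0<(h_a+h_{a+1})'(k/a)$, convexity forces exactly one critical point in the interval, which is necessarily the minimum.

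For the upper bound, I would evaluate at $t=s_a$. The elementary inequality $(1+1/a)^{a+1}>e$ places $s_a$ in $(\frac{k}{a+1},\frac{k}{a})$, and under the hypothesis $k\geq 55$ one checks that $s_a$ actually lies in the sharper subinterval $(\frac{k+2}{a+1},\frac{k}{a})$ where the minimum is already known to live. Factoring out the endpoint values produces
$$h_a(s_a)+h_{a+1}(s_a)=h_a(\tfrac{k}{a})\bigl(\tfrac{as_a}{k}\bigr)^{k+1}e^{k-as_a}+h_{a+1}(\tfrac{k}{a+1})\bigl(\tfrac{(a+1)s_a}{k}\bigr)^{k+1}e^{k-(a+1)s_a},$$
so it suffices to bound each ratio by $e^{-k/(17a^2)}$. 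Expanding the logarithm of $(as_a/k)^{k+1}e^{k-as_a}$ as a power series in $y=1/a$ yields a leading term $-ky^2/8$ together with a controlled $O(ky^3)$ remainder, and the computation for the other ratio is analogous; for $a\geq 2$ the remainder is absorbed into $e^{-k/(16a^2)}\leq e^{-k/(17a^2)}$, and the residual case $a=1$ is covered by the direct estimate $e^{-0.059k}\leq e^{-k/17}$ already noted in the text.

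The main obstacle is the bookkeeping in this last Taylor expansion: one must retain enough terms of the expansions of $a\log(1+1/a)$ and $1-a\log(1+1/a)$ to be sure that the leading correction $-k/(8a^2)$ is not swamped by the $O(k/a^3)$ contribution. The assumption $a\leq\sqrt{k}/\log k$ is precisely what keeps these higher-order corrections negligible compared with the principal term, and splitting off $a=1$ avoids having to chase down the exact numerical constant in the lowest case.
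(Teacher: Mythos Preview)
Your proposal is correct and follows essentially the same route as the paper: the same factorization of $h_a''$ for convexity, the same endpoint sign checks for uniqueness, and the same evaluation at $s_a=k\log\frac{a+1}{a}$ with a Taylor expansion in $y=1/a$ for the quantitative bound, including the separate treatment of $a=1$. Two minor remarks: the observation that $s_a$ lies in the sharper subinterval $(\frac{k+2}{a+1},\frac{k}{a})$ is superfluous for the upper bound (any interior point dominates the minimum of a convex function), and the hypothesis $a\le\sqrt{k}/\log k$ is actually invoked in the paper for the endpoint derivative signs rather than for controlling the Taylor remainder, which only needs $a\ge 2$.
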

So we have proved theorem \ref{others-inside}.

\

As $b$ gets bigger, the error grows. When the error is unacceptable, we need a new method to estimate $\rho_{0,k+1}$. We recall the Poisson summation formula. 

Let $$ \hat{f}(\xi)=\int_{-\infty}^{\infty}e^{-2i\pi \xi x}f(x)dx $$
denote the Fourier transform of $ f(x)$, then for suitable function $f(x)$, the
Poisson's summation formula says:
$$\sum_{c=-\infty}^{\infty}f(c)=\sum_{\xi=-\infty}^{\infty}\hat{f}(\xi)$$

We define a function on $\R$
$$s_b(c)=
\begin{cases}
(1+\frac{c}{b})^ke^{-ct}& \textbf{if } c\geq -b\\
0& \textbf{otherwise}
\end{cases}
$$ 
Then 
$$\hat{s_b}(\xi)=b^{-k}t^{-1-k}e^{tb+2i\pi b\xi}k!(1+\frac{2i\pi \xi}{t})^{-1-k}$$
And the Poisson summation formula applies to $s_b$\cite{pinsky} gives
$$f_b(t)=b^{-k}t^{-1-k}e^{tb}k!\sum_{\xi=-\infty}^{\infty}(1+\frac{2i\pi \xi}{t})^{-1-k}$$
Therefore
$$\rho_{0,k+1}=\frac{k}{2\pi}(1+\sum_{\xi=1}^{\infty}(1+(\frac{2\pi\xi }{t} )^2)^{-(k-1)/2}2\cos[ (k+1)\theta(t,\xi)])$$
where $\theta(t,\xi)$ satisfies $\cos\theta=\frac{1}{\sqrt{1+(\frac{2\pi\xi }{t} )^2}}$ and $\sin\theta_t=\frac{2\pi\xi/t}{\sqrt{1+(\frac{2\pi \xi}{t} )^2}}$.

 We then use the integral 
 $$\int_{1}^{\infty}(1+(\frac{2\pi \xi}{t} )^2)^{-(1+k)/2}\xi d\xi=\frac{t^2}{2\pi^2 (k-1)}(1+(\frac{2\pi }{t} )^2)^{-(k-1)/2} $$ to bound the summation, so we have

	

So we have proved theorem \ref{outside}.

\

We then turn to the proof of theorem \ref{middle}.

We will use the following basic lemma.
\begin{lem}\label{lemconcave}
	Let $f(x)$ be a concave function. Suppose $f'(x_0)<0$, then we have
	$$\int_{x_0}^\infty e^{f(x)}dx\leq\frac{e^{f(x_0)}}{-f'(x_0)}$$
\end{lem}

\begin{prop}
	For $b\leq \sqrt{k}\log k$,
	$$\gamma_b>f_b(\frac{k}{b})\geq (1-\frac{8(\log k)^4}{k})\gamma_b-(6+2e^{\frac{8(\log k)^3}{3\sqrt{k}}})k^{-2\log k}$$
	where $\gamma_b=\sum_{c=-\infty}^{\infty}e^{-kc^2/(2b^2)}$
\end{prop}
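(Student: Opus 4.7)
The plan is to compare $f_b(k/b) = \sum_{c\geq 1-b}e^{\phi(c)}$, where $\phi(c):=k\log(1+c/b)-ck/b$, termwise against the Gaussian series $\gamma_b=\sum_{c\in\Z}e^{\psi(c)}$ with $\psi(c):=-kc^2/(2b^2)$, after truncating to a central window $|c|\leq c_0:=\lceil 2b\log k/\sqrt{k}\rceil$. Under the hypothesis $b\leq \sqrt{k}\log k$ the window satisfies $c_0\leq b/2$ for $k$ sufficiently large, keeping $|c/b|$ small enough that the Taylor expansion of $\log(1+c/b)$ is well-controlled; the width is chosen precisely so that $e^{\psi(\pm c_0)}\leq k^{-2\log k}$, matching the decay rate of the error term in the proposition.

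The core local estimate concerns the deviation
\[ E_c:=\phi(c)-\psi(c)=k\bigl(\log(1+c/b)-c/b+c^2/(2b^2)\bigr). \]
Starting from the integral identity $\log(1+x)-x+x^2/2=\int_0^x s^2/(1+s)\,ds$ and bounding the integrand with $|x|\leq 2\log k/\sqrt{k}$, one obtains $|E_c|\leq 8(\log k)^3/(3\sqrt{k})$; this is the source of the exponential inflation in the error constant. For the tails, monotonicity of $\phi$ away from its maximum at $c=0$ together with Lemma \ref{lemconcave} bounds each of the two $f_b$-tails (over $c>c_0$ and $1-b\leq c<-c_0$) by $e^{\phi(c_0)}\cdot(\text{geometric ratio})\leq k^{-2\log k}e^{8(\log k)^3/(3\sqrt{k})}$, while a direct geometric estimate on the Gaussian handles $\sum_{|c|>c_0}e^{\psi(c)}$ and the missing piece $c\leq -b$ of $\gamma_b$. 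Assembling these pieces produces the stated correction $(6+2e^{8(\log k)^3/(3\sqrt{k})})k^{-2\log k}$.

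For the central comparison, write $e^{\phi(c)}=e^{\psi(c)}(1+(e^{E_c}-1))$ and Taylor expand $E_c=kc^3/(3b^3)-kc^4/(4b^4)+\cdots$. The naive bound $e^{E_c}\geq 1-|E_c|$ alone only yields an error of order $(\log k)^3/\sqrt{k}$, which is too weak for the claim. The improvement comes from parity: the leading cubic $kc^3/(3b^3)$ is odd in $c$ while the Gaussian weight $e^{\psi(c)}$ is even, so it cancels in the symmetric sum over $|c|\leq c_0$, leaving only the negative quartic whose size at the window boundary equals $kc_0^4/(4b^4)=4(\log k)^4/k$. A Gaussian moment calculation then upgrades the multiplicative error to the claimed factor $1-8(\log k)^4/k$, and since the surviving quartic contribution is strictly negative, the strict upper inequality $\gamma_b>f_b(k/b)$ drops out of the same comparison.

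The principal obstacle is to implement the cubic cancellation rigorously while controlling the second-order remainder $e^{E_c}-1-E_c$, which is always positive and carries a multiplicative factor $e^{|E_c|}$. I would use the standard inequality $|e^x-1-x|\leq x^2 e^{|x|}/2$ applied to $x=E_c$, combined with a Gaussian moment estimate showing that $\sum_{|c|\leq c_0}e^{\psi(c)}E_c^2 e^{|E_c|}=O((\log k)^4/k)\cdot\gamma_b$; this exploits both the smallness of $|E_c|$ on the central window and the rapid decay of $e^{\psi(c)}$ away from $c=0$.
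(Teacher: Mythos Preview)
Your overall architecture—truncate at $c_0=2b\log k/\sqrt{k}$, control the two tails of $f_b$ and the Gaussian tails separately, and compare the central windows by exploiting the parity of the cubic term in $E_c$—is exactly what the paper does. For the lower bound the paper packages the central step in the pairwise inequality $(1+x)e^{-x+x^2/2}+(1-x)e^{x+x^2/2}>2-x^4$ together with the convexity of $t\mapsto t^k$ (so $u^k+v^k\geq 2((u+v)/2)^k>2-kx^4$), which gives the factor $1-kc^4/(2b^4)\geq 1-8(\log k)^4/k$ in one stroke; your Taylor/Gaussian-moment version reaches the same conclusion by a slightly longer path.

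Your argument for the strict upper bound $\gamma_b>f_b(k/b)$, however, has a real gap. You say it ``drops out'' because the surviving quartic in $E_c$ is negative, but you must weigh that against the positive remainder $e^{E_c}-1-E_c\geq E_c^2/2$. The dominant part of $E_c^2$ comes from the cubic squared, $k^2c^6/(9b^6)$, and the Gaussian sixth moment gives
\[
\sum_{c}e^{\psi(c)}\cdot\frac{k^2c^6}{18b^6}\ \approx\ \frac{k^2}{18b^6}\cdot 15\Bigl(\frac{b^2}{k}\Bigr)^3\gamma_b=\frac{5}{6k}\,\gamma_b,
\]
whereas the quartic contributes only $-\frac{k}{4b^4}\cdot 3(b^2/k)^2\gamma_b=-\frac{3}{4k}\gamma_b$. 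The net is $+\frac{1}{12k}\gamma_b>0$, so on the central window $f_b$ actually \emph{exceeds} $\gamma_b$, and the tails (of size $k^{-2\log k}$) cannot reverse this. A direct check at $k=100$, $b=10$ gives $f_b(10)\approx 2.5088>2.5066\approx\gamma_b$; the excess is precisely the Stirling correction $e^{1/(12k)}$. The paper's own proof shares this defect: it quotes the $k=1$ inequality $u(x)+u(-x)<2$, but convexity of $t^k$ turns that into a lower bound for $u^k+v^k$, not an upper one, so the asserted pairwise inequality $\sum_{|c|\leq c_0}(1+c/b)^k e^{-ck/b}\leq\sum_{|c|\leq c_0}e^{-kc^2/(2b^2)}$ is unjustified and in fact false.
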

\begin{proof}
	Let $c_0=2\frac{b\log k}{\sqrt{k}}$. 
	We first truncate the summation $f_b(\frac{k}{b})=\sum_{c=-b}^{\infty}(1+\frac{c}{b})^ke^{-ck/b}$ to $\sum_{c=-c^0}^{c_0}(1+\frac{c}{b})^ke^{-ck/b}$. By doing so, we introduce errors:
	\begin{eqnarray*}
		\sum_{c=c^0}^{\infty}(1+\frac{c}{b})^ke^{-ck/b}&\leq& (1+\frac{c_0}{b})^ke^{-c_0k/b}/(1-e^{-\frac{kc_0}{(b+c_0)b}})\\
		&\leq&2(1+\frac{\log k}{\sqrt{k}})^ke^{-\sqrt{k}\log k}\\
		&\leq&2e^{-2(\log k)^2+\frac{8(\log k)^3}{3\sqrt{k}}}
	\end{eqnarray*}
	and 
	\begin{eqnarray*}
		\sum_{c=-b}^{-c_0}(1+\frac{c}{b})^ke^{-ck/b}&\leq& (1-\frac{c_0}{b})^ke^{c_0k/b}/(1-e^{-\frac{kc_0}{(b-c_0)b}})\\
		&\leq&2(1-\frac{\log k}{\sqrt{k}})^ke^{\sqrt{k}\log k}\\
		&\leq&2e^{-2(\log k)^2}
	\end{eqnarray*}
	Then we use the estimate for $x<1$,
	$$0>(1+x)e^{-x+x^2/2}+(1-x)e^{x+x^2/2}-2>-x^4$$
	, to estimate:
	\begin{eqnarray*}
		\sum_{c=-c_0}^{c_0}(1+\frac{c}{b})^ke^{-ck/b}&\geq& \sum_{c=-c^0}^{c_0}(1-\frac{kc^4}{2b^4})e^{-kc^2/(2b^2)}\\
		&\geq& \sum_{c=-c^0}^{c_0}(1-\frac{8(\log k)^4}{k})e^{-kc^2/(2b^2)}
	\end{eqnarray*}
and 
	\begin{eqnarray*}
	\sum_{c=-c_0}^{c_0}(1+\frac{c}{b})^ke^{-ck/b}&\leq& \sum_{c=-c^0}^{c_0}e^{-kc^2/(2b^2)}
\end{eqnarray*}
	Then the difference between $\sum_{c=-c^0}^{c_0}e^{-kc^2/(2b^2)}$ and $\sum_{c=-\infty}^{\infty}e^{-kc^2/(2b^2)}$ is less than $4e^{-2(\log k)^2}$. Then by putting them all together we get the conclusion.
\end{proof}
For $t\in (\frac{k}{b+1},\frac{k}{b})$, we need to look at $(g(d))^kf_b(t)$. Then we use the substitution $u=1-d$ to get
$$(de^{1-d})^k(1+\frac{c}{b})^ke^{-ct}=e^{k(\log (1+\frac{c}{b})-\frac{c}{b})+\frac{ck}{b}u+ku+k\log(1-u)}$$
We can take derivatives to see that the exponent is a concave function of $c$. We also notice that $u\in (0,\frac{1}{b+1})$. So if we truncate the summation again using $c_0=\frac2{b\log k}{\sqrt{k}}$, and assume $b\geq \frac{\sqrt{k}}{\log k}$, we should have similar results as in the proof of last proposition. So we follow the ideas there. 

\begin{prop}
	For $b\leq \sqrt{k}\log k$, and $t\in (\frac{k}{b+1},\frac{k}{b})$
	$$(g(d))^kf_b(t)\geq  (1-\frac{(\log k)^3}{3\sqrt{k}})\gamma_b(u)-12e^{-\frac{k}{2}(\frac{\log k}{\sqrt{k}}-u)^2}$$
	$$(g(d))^kf_b(t)\leq  (1+\frac{(\log k)^3}{3\sqrt{k}})\gamma_b(u)$$
	,	where $u=1-d$ and $\gamma_b(u)=\sum_{c=-\infty}^{\infty}e^{-\frac{k}{2}(\frac{c}{b}-u)^2}$
\end{prop}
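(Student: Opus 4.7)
My plan is to follow closely the strategy of the previous proposition, adjusting for the fact that $u>0$ shifts the effective Gaussian $\gamma_b(u)$ off-center. The first step will be to simplify the exponent of the summand: using the identity $(1+c/b)(1-u)=1+\xi$ with $\xi:=c/b - u - cu/b$, the expression displayed just before the proposition telescopes to
$$E(c,u):=k\bigl[\log(1+c/b)-c/b\bigr]+\tfrac{cku}{b}+ku+k\log(1-u)=k[\log(1+\xi)-\xi],$$
which is concave in $c$ with maximum value $0$ attained near $c^{\ast}=ub/(1-u)\in(0,1)$. This puts the series $(g(d))^k f_b(t)=\sum_{c\ge 1-b}e^{E(c,u)}$ in a form directly comparable to a Gaussian in $c/b-u$.

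Next I will truncate the sum at $|c|\leq c_0:=b\log k/\sqrt{k}$, just as in the previous proposition. On this window the hypothesis $b\geq \sqrt{k}/\log k$ (imposed in the paragraph preceding the proposition) gives $u\leq 1/(b+1)\leq \log k/\sqrt{k}$, and $|c/b|\leq \log k/\sqrt{k}$. Expanding
$$-\tfrac{k}{2}\xi^2=-\tfrac{k}{2}(c/b-u)^2+\tfrac{kc^2u}{b^2}-\tfrac{kcu^2}{b}-\tfrac{kc^2u^2}{2b^2}$$
and folding in the cubic-and-higher Taylor coefficients $k\xi^3/3+\cdots$, each of the extra terms is at most $O((\log k)^3/\sqrt{k})$; careful bookkeeping of constants then yields, for every $|c|\leq c_0$,
$$e^{-\tfrac{k}{2}(c/b-u)^2}\bigl(1-\tfrac{(\log k)^3}{3\sqrt{k}}\bigr)\leq e^{E(c,u)}\leq e^{-\tfrac{k}{2}(c/b-u)^2}\bigl(1+\tfrac{(\log k)^3}{3\sqrt{k}}\bigr).$$

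For the upper bound, I will control the tails $|c|>c_0$ via concavity of $E(\cdot,u)$ combined with Lemma \ref{lemconcave}, exactly as in the previous proposition; these tails are dominated by $e^{E(\pm c_0,u)}$ divided by the relevant slopes, themselves of size $O(e^{-k(c_0/b-u)^2/2})$, and are comfortably absorbed into the leading $\gamma_b(u)$ factor. For the lower bound I will simply drop the positive tail of $(g(d))^k f_b(t)$ for $|c|>c_0$, then subtract from $\gamma_b(u)$ the missing Gaussian tails: the $c<-c_0$ contribution is negligible (the off-center shift $u>0$ makes this tail decay even faster than in the symmetric case), while the $c>c_0$ contribution is summed as a geometric series and yields the stated $12\,e^{-k(\log k/\sqrt{k}-u)^2/2}$ after combining with the $(1\pm\delta)$ constant above.

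The principal obstacle, relative to the symmetric $u=0$ case of the previous proposition, is that the shift of the Gaussian by $u$ forbids the pairwise cancellation of cubic Taylor terms that produced the sharper $O((\log k)^4/k)$ error there. Here one has to accept the weaker $O((\log k)^3/\sqrt{k})$ error coming from the uncancelled cubic, and the remaining work is the careful bookkeeping of constants in the truncation and tail-bounding steps.
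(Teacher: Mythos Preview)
Your proposal is correct and follows essentially the same route as the paper: truncate at $c_0=b\log k/\sqrt{k}$, compare each summand on the window $|c|\le c_0$ to the Gaussian $e^{-\frac{k}{2}(c/b-u)^2}$ up to a multiplicative error $1\pm\frac{(\log k)^3}{3\sqrt{k}}$, and handle the two tails (of the original series and of $\gamma_b(u)$) by concavity and geometric-series bounds.

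The only notable difference is cosmetic. You package the exponent via the substitution $\xi=c/b-u-cu/b$, obtaining the single-variable form $E(c,u)=k[\log(1+\xi)-\xi]$, and then Taylor-expand $\xi^2$ around $(c/b-u)^2$. The paper instead works directly with $g_b(c,u)$ and uses the algebraic decomposition
\[
g_b(c,u)+\tfrac{k}{2}\bigl(\tfrac{c}{b}-u\bigr)^2=k\bigl[\log(1+\tfrac{c}{b})-\tfrac{c}{b}+\tfrac{c^2}{2b^2}\bigr]+k\bigl[\log(1-u)+u+\tfrac{u^2}{2}\bigr],
\]
bounding each bracket by its cubic term via an elementary inequality for $\log(1\pm x)$. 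Your $\xi$-formulation is slightly more transparent (concavity and the location of the maximum are immediate), while the paper's splitting makes the source of the constant $\tfrac{1}{3}$ more visible as $k|c|^3/(3b^3)$. In either bookkeeping the precise constant is somewhat generous; both arguments land on the same stated bounds.
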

\begin{proof}
	We denote by $g_b(c,u)=k(\log (1+\frac{c}{b})-\frac{c}{b})+\frac{ck}{b}u+ku+k\log(1-u)$. 
	Then $\frac{d}{dc}g_b(c,u)=-\frac{kc}{b(b+c)}+\frac{ku}{b}$
	\begin{eqnarray*}
		\sum_{c=c_0}^{\infty}e^{g_b(c,u)}&\leq& e^{g_b(c_0,u)}/(1-e^{-\frac{kc_0}{b(b+c_0)}+\frac{ku}{b}})\\
		&\leq& 2e^{g_b(c_0,u)}
	\end{eqnarray*}
	and 
	\begin{eqnarray*}
		\sum_{c=-b}^{-c_0}e^{g_b(c,u)}&\leq& e^{g_b(-c_0,u)}/(1-e^{-\frac{kc_0}{b(b-c_0)}+\frac{ku}{b}})\\
		&\leq& 2e^{g_b(-c_0,u)}
	\end{eqnarray*}
	We then use the inequality
	$e^{Log[1 - x] - x + x^2/2} - 1 + x^3/2\geq 0$ for $0.4>x\geq 0$ to get
	\begin{eqnarray*}
		e^{g_b(c,u)}&\geq& (1-\frac{k|c|^3}{3b^3})e^{-\frac{k}{2}(\frac{c}{b}-u)^2}
	\end{eqnarray*}
and since $u\geq 0$,
	\begin{eqnarray*}
	e^{g_b(c,u)}&\leq& (1+\frac{k|c|^3}{3b^3})e^{-\frac{k}{2}(\frac{c}{b}-u)^2}
\end{eqnarray*}
	So 
	\begin{eqnarray*}
		\sum_{c=-c_0}^{c_0}	e^{g_b(c,u)}&\geq& (1-\frac{k|c_0|^3}{3b^3})	\sum_{c=-c_0}^{c_0}e^{-\frac{k}{2}(\frac{c}{b}-u)^2}
	\end{eqnarray*}
and 
	\begin{eqnarray*}
	\sum_{c=-c_0}^{c_0}	e^{g_b(c,u)}&\leq& (1+\frac{k|c_0|^3}{3b^3})	\sum_{c=-c_0}^{c_0}e^{-\frac{k}{2}(\frac{c}{b}-u)^2}
\end{eqnarray*}
	While $$\sum_{c>c_0}e^{-\frac{k}{2}(\frac{c}{b}-u)^2}\leq e^{-\frac{k}{2}(\frac{\log k}{\sqrt{k}}-u)^2}/(1-e^{-k(\frac{\log k}{\sqrt{k}}-u)})\leq 2e^{-\frac{k}{2}(\frac{\log k}{\sqrt{k}}-u)^2}$$
	and 
	$$\sum_{c<-c_0}e^{-\frac{k}{2}(\frac{c}{b}-u)^2}\leq e^{-\frac{k}{2}(-\frac{\log k}{\sqrt{k}}-u)^2}/(1-e^{-k(\frac{\log k}{\sqrt{k}}+u)})\leq 2e^{-\frac{k}{2}(\frac{\log k}{\sqrt{k}}+u)^2}$$
	So we have
	\begin{eqnarray*}
		\sum_{c=-b}^{\infty}e^{g_b(c,u)}&\geq& (1-\frac{(\log k)^3}{3\sqrt{k}})\gamma_b(u)-2e^{-\frac{k}{2}(\frac{\log k}{\sqrt{k}}-u)^2}-2e^{-\frac{k}{2}(\frac{\log k}{\sqrt{k}}+u)^2}\\
		&&-2e^{g_b(c_0,u)}-2e^{g_b(-c_0,u)}\\
		&\geq & (1-\frac{(\log k)^3}{3\sqrt{k}})\gamma_b(u)-12e^{-\frac{k}{2}(\frac{\log k}{\sqrt{k}}-u)^2}
	\end{eqnarray*}
\end{proof}

So we have proved theorem \ref{middle}

\section{punctured Riemann surfaces }
We will use the same setting as that in \cite{AMM}, as described in the introduction.

To obtain global sections of $L^k$ from local ones, we need to use H\"ormander's $L^2$ estimate. The following lemma is well-known, see for example \cite{Tian1990On}. 

\begin{lem}\label{lemHorm}
	Suppose $(M,g)$ is a complete \kahler manifold of complex dimension $n$, $\mathcal L$ is a line bundle on $M$ with hermitian metric $h$. If 
	$$\langle-2\pi i \Theta_h+Ric(g),v\wedge \bar{v}\rangle_g\geq C|v|^2_g$$
	for any tangent vector $v$ of type $(1,0)$ at any point of $M$, where $C>0$ is a constant and $\Theta_h$ is the curvature form of $h$. Then for any smooth $\mathcal L$-valued $(0,1)$-form $\alpha$ on $M$ with $\bar{\partial}\alpha=0$ and $\int_M|\alpha|^2dV_g$ finite, there exists a smooth $\mathcal L$-valued function $\beta$ on $M$ such that $\bar{\partial}\beta=\alpha$ and $$\int_M |\beta|^2dV_g\leq \frac{1}{C}|\alpha|^2dV_g$$
	where $dV_g$ is the volume form of $g$ and the norms are induced by $h$ and $g$.
\end{lem}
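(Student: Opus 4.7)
The plan is to follow Hörmander's classical $L^2$ method, splitting the argument into an a priori curvature estimate and a functional-analytic existence step. First I would set up $\bar\partial$ as a densely defined closed operator from $L^2(M,\mathcal L)$ to $L^2(M,\mathcal L\otimes \Lambda^{0,1})$ and consider its Hilbert space adjoint $\bar\partial^*$. The completeness hypothesis on $(M,g)$ is used exactly here, via the Andreotti--Vesentini/Gaffney approximation lemma: there exists a sequence of cutoff functions $\chi_j$ with $|d\chi_j|_g\to 0$, which shows that smooth compactly supported $(0,1)$-forms with values in $\mathcal L$ form a core for both $\bar\partial$ and $\bar\partial^*$. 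This allows all subsequent integrations by parts to be performed legally on compactly supported data and then extended by density.

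Next I would invoke the Bochner--Kodaira--Nakano identity for $\mathcal L$-valued $(0,1)$-forms $u$ of compact support:
$$\|\bar\partial u\|^2 + \|\bar\partial^* u\|^2 = \|\nabla'' u\|^2 + \int_M \bigl\langle [\,-2\pi i\,\Theta_h + \mathrm{Ric}(g),\Lambda\,]\,u,u\bigr\rangle_g\, dV_g.$$
For $(0,1)$-forms, the commutator with $\Lambda$ reduces the curvature term to a pointwise pairing which, under the hypothesis $\langle -2\pi i\,\Theta_h + \mathrm{Ric}(g), v\wedge\bar v\rangle_g \geq C|v|_g^2$, is bounded below by $C|u|_g^2$. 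Dropping $\|\nabla'' u\|^2 \geq 0$ and extending by density in the graph norm gives the a priori estimate
$$\|u\|^2 \leq \frac{1}{C}\bigl(\|\bar\partial u\|^2 + \|\bar\partial^* u\|^2\bigr), \qquad u\in \mathrm{Dom}(\bar\partial)\cap\mathrm{Dom}(\bar\partial^*).$$

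With this estimate in hand, I would deduce existence by the standard Riesz--Hahn--Banach argument. Since $\bar\partial\alpha=0$, for any $u\in\mathrm{Dom}(\bar\partial^*)$ decompose $u = u_1 + u_2$ with $u_1\in\overline{\ker\bar\partial}$ and $u_2\perp\ker\bar\partial$; then $u_2\in\ker\bar\partial^{**}=\ker\bar\partial$ fails, but $\bar\partial^* u_2=0$, so $\bar\partial^* u = \bar\partial^* u_1$ and $\langle u,\alpha\rangle = \langle u_1,\alpha\rangle$. Applying the a priori estimate to $u_1$ (which satisfies $\bar\partial u_1 = 0$) yields $|\langle u,\alpha\rangle|^2 \leq \|u_1\|^2\|\alpha\|^2 \leq \tfrac{1}{C}\|\bar\partial^* u\|^2\|\alpha\|^2$. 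Hence the functional $\bar\partial^* u\mapsto \langle u,\alpha\rangle$ is well-defined and bounded on the range of $\bar\partial^*$; the Hahn--Banach/Riesz representation theorem supplies $\beta\in L^2(M,\mathcal L)$ with $\|\beta\|^2\leq \tfrac{1}{C}\|\alpha\|^2$ and $\langle \beta,\bar\partial^* u\rangle = \langle u,\alpha\rangle$ for all admissible $u$, which is precisely $\bar\partial\beta = \alpha$ in the distributional sense. Interior elliptic regularity for $\bar\partial$ (with $\alpha$ smooth) promotes $\beta$ to a smooth solution.

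The main technical obstacle is the passage from the pointwise curvature positivity to the global a priori estimate on non-compact $M$: the Bochner--Kodaira identity is an integration by parts which is only legitimate for compactly supported forms, and one must use the completeness of $g$ to approximate arbitrary $u\in \mathrm{Dom}(\bar\partial)\cap\mathrm{Dom}(\bar\partial^*)$ in the graph norm by such forms. Once that density is secured the rest is formal; the curvature hypothesis is engineered precisely so that the $[\,\cdot\,,\Lambda]$ term dominates.
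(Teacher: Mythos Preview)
Your outline is the standard H\"ormander argument and is essentially correct; the one garbled sentence in the decomposition step (``$u_2\in\ker\bar\partial^{**}=\ker\bar\partial$ fails, but $\bar\partial^* u_2=0$'') should simply read that $u_2\in(\ker\bar\partial)^\perp\subset\ker\bar\partial^*$, whence $\bar\partial^* u=\bar\partial^* u_1$ and the a~priori estimate applies to $u_1\in\ker\bar\partial\cap\mathrm{Dom}(\bar\partial^*)$. With that clarification the chain of inequalities goes through.

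As for comparison with the paper: there is nothing to compare. The paper does not prove this lemma at all; it states it as well-known and refers the reader to \cite{Tian1990On}. Your write-up therefore goes well beyond what the paper does, supplying the classical Bochner--Kodaira/functional-analysis proof that the paper simply quotes.
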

In our setting, we can take the constant $C$ in the lemma above to be $k\epsilon+\lambda$. We have basic bounds for $\epsilon$  and $\lambda$, namely
$$\epsilon\leq d/v,\quad \lambda\leq (2-2g-N)/v$$
where $2\pi v$ is the volume of $X_D$ with volume form $\omega$, $g$ is the genus of $X$, and $N$ is the degree of $D$.

\

The following two assumptions are dependent(weaker) on the assumptions 1,2 and 3, but we list them below for convenience.
\begin{itemize}
	\item[4. ] $k^{1/4}\sqrt{k\epsilon+\lambda}(\frac{R}{2})^{\frac{k}{-2e\log(R/2)}}\leq 1$
	\item[5. ] $\frac{\log k}{4}\geq\log (-2e\log\frac{R}{2})-\frac{1}{k}\log(\epsilon+\frac{\lambda}{k})$.
\end{itemize}
\begin{rem}
	\begin{itemize}
		\item[$\bullet$]	Assumption 4 is weaker than the combination of assumption 1 and assumption 2. 
		\item[$\bullet$]	Assumption 5 is weaker than the combination of assumption 2 and assumption 3.
		\item[$\bullet$]	One reason for the number 23190 in assumption 3 is to make $k$ satisfy the inequality $k>(1+\frac{1}{4}\log k)^8$.
	\end{itemize}
	
\end{rem}

We fix a choice of smooth function $\chi(x)$ satisfying:
\begin{itemize}
	\item[$\bullet$] $\chi(x)=1$ for $x\leq 1/2$.
	\item[$\bullet$] $\chi(x)=0$ for $x\geq 1$.
	\item[$\bullet$] $\chi'(x)\leq 3$ for all $x$.
\end{itemize}
Then for each positive $R$, we define $\chi_R(x)=\chi(\frac{x}{R})$. In particular, we have $\chi_R'(x)\leq \frac{3}{R}$. Then with the trivialization of $L$ on $V_i$ as in assumption $\alpha$, we consider $\chi_{R_i}\tau_az_i^a$ as a global section of $L^k$ for each integer $a\geq 1$. Then we solve the $\dbar-$equation $$\dbar\beta=(\dbar \chi_{R_i})\tau_az_i^a$$
Then by lemma \ref{lemHorm}, we can find $\beta_{i,a}$ such that
$$\int_{X_D} |\beta_{i,a}|_h^2\omega\leq \frac{1}{k\epsilon+\lambda}\int_{X_D}|\alpha_{i,a}|^2_h\omega$$  
where $\alpha_{i,a}=(\dbar \chi_{R_i})\tau_az_i^a$.

We denote by $\bar{s_i}_a=\chi_{R_i}\tau_az^a-\beta_{i,a}$ for $a\leq  k^{3/4}$, $i\in \{1,\cdots,N\}$. Clearly $\bar{s}_{i,a}\in \hcal_k$. 

\

Then we will focus on the neighborhood $V_i$ of each singularity point $a_i\in D$, and for simplicity, we will fix an index $i$ and write $R=R_i$ and $\bar{s}_{i,a}=\bar{s}_a$ for $a\leq k^{3/4}$, etc.. 

\

Then we apply the Gram-Schmidt process to $\bar{s}_a$ as $a$ increases from $1$ to $k^{3/4}$, producing sections $s_{a}$. We then add more sections $s_\gamma$, $\gamma>k^{3/4}$, so that together they form an orthonormal basis of $\hcal_k$. And the Bergman kernel of $\hcal_k$ is
$$\rho_k=\sum_{a\geq 1} |s_a|_h^2$$
Recall that we have
\begin{equation}
	\rho_{0,k}=(\log \log \frac{1}{|z|^2})^k\sum_{a=1}^{\infty}|\tau_az^a|^2
\end{equation}   
, where $\tau_a^2=\frac{a^{k-1}}{2\pi (k-2)!}$.

We can then calculate:
\begin{eqnarray*}
	\int_{|z|\geq R/2}|\tau_az^a|^2_h\omega &=&2\pi\tau_a^2\int_0^{-2\log \frac{R}{2}}e^{(k-2)\log t-at}dt\\
	&\leq &2\pi\tau_a^2(\frac{R}{2})^{2a}(-2\log \frac{R}{2})^{k-1}
\end{eqnarray*}
Then using the inequality $$k!\geq \sqrt{2\pi }k^{k+1/2}e^{-k}$$
we get that 
$$\int_{|z|\geq R/2}|\tau_az^a|^2_h\omega \leq \frac{k-1}{5\sqrt{k}}(\frac{-2ea\log(R/2)}{k})^{k-1}(\frac{R}{2})^{2a}$$
We will denote by $A_a=\frac{k-1}{5\sqrt{k}}(\frac{-2ea\log(R/2)}{k})^{k-1}(\frac{R}{2})^{2a}$.

\begin{eqnarray*}
	\parallel \alpha_a\parallel^2 &=&	\int |(\dbar \chi_R)\tau_az^a|^2_h\omega\\
	 &\leq &\frac{18\pi\tau_a^2}{R^2}\int_{(R/2)^2}^{R^2}xt^2x^at^{k-2}x^{-1}dx\\
	 &=&\frac{18\pi\tau_a^2}{R^2}\int_{-2\log R}^{-2\log (R/2)}e^{-(a+1)t+k\log t}dt\\
	&\leq &\frac{18\pi\tau_a^2\log 4}{R^2} (\frac{R}{2})^{2a+2}(-2\log \frac{R}{2})^k\\
		&\leq &\frac{k^{1/2}(k-1)}{a}(\frac{-2ea\log(R/2)}{k})^{k}(\frac{R}{2})^{2a}
\end{eqnarray*}
We will denote by $B_a=\frac{k^{1/2}(k-1)}{a(k\epsilon+\lambda)}(\frac{-2ea\log(R/2)}{k})^{k}(\frac{R}{2})^{2a}$.

\

The ratio
$$\frac{B_{a+1}}{B_a}=(\frac{a+1}{a})^{k-1}(\frac{R}{2})^2$$
says that $B_a$ is an increasing function of $a$ as long as $a\leq \frac{k-1}{-2\log (R/2)}$. In particular, when $a=\frac{k}{-2e\log(R/2)}$, we have 
$$B_a\leq \frac{-2e\log(R/2)(k-1)}{k^{1/2}(k\epsilon+\lambda)}(\frac{R}{2})^{\frac{k}{-e\log(R/2)}}$$
which is clearly very small for $k$ large enough. Also, 
$$A_a\leq B_a^{1/2}$$
when $a\leq \frac{k}{-2e\log(R/2)}$ and $k$ satisfies assumptions 2 and 3 .

\

We denote by $E(a_0)=\sum_{a\leq a_0}B_a^{1/2}$. Then
for $a_0\leq  \frac{k-1}{-2\log (R/2)}$, we have
	\begin{eqnarray*}
		E(a_0)&=&\frac{k^{1/4}(k-1)^{1/2}}{\sqrt{(k\epsilon+\lambda)}}(\frac{-2e\log \frac{R}{2}}{k})^{k/2}\sum_{a\leq a_0}(\frac{R}{2})^aa^{(k-1)/2}\\
		&\leq &\frac{k^{1/4}(k-1)^{1/2}}{\sqrt{(k\epsilon+\lambda)}}(\frac{-2e\log \frac{R}{2}}{k})^{k/2}(\frac{R}{2})^{a_0}a_0^{(k+1)/2}\\
		&=&a_0B_{a_0}^{1/2}
	\end{eqnarray*}

\begin{lem}Assume assumption 1. For $a<k^{3/4}$, 
\begin{equation}
	|\parallel \bar{s}_a\parallel^2-1|\leq 4B_a^{1/2}
\end{equation}
\end{lem}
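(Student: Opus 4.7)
Starting from the construction $\bar{s}_a=\chi_R\tau_a z^a-\beta_a$ (writing $\beta_a=\beta_{i,a}$), I would expand the squared norm bilinearly as
\begin{equation*}
\|\bar{s}_a\|^2=\|\chi_R\tau_a z^a\|^2-2\,\Re\langle \chi_R\tau_a z^a,\beta_a\rangle_k+\|\beta_a\|^2,
\end{equation*}
and control the three pieces separately by multiples of $B_a^{1/2}$, then sum.

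For $\|\beta_a\|^2$, Hörmander's estimate (Lemma \ref{lemHorm}) with the curvature+Ricci lower bound $k\epsilon+\lambda$ applied to $\dbar\beta_a=\alpha_a$ gives $\|\beta_a\|^2\leq(k\epsilon+\lambda)^{-1}\|\alpha_a\|^2$, and combined with the $\|\alpha_a\|^2$-estimate displayed just before the lemma this yields exactly $\|\beta_a\|^2\leq B_a$. For $\|\chi_R\tau_a z^a\|^2$, condition $(\beta)$ says that $\omega$ coincides with the standard Poincaré metric $\omega_0$ on $V_i$, and $\chi_R$ is supported in $\{|z|\leq R\}\subset V_i$, so this norm can be computed as $\int_{\mathbb D^*}\chi_R^2|\tau_a z^a|^2\omega_0$. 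Lemma \ref{integral} gives $\int_{\mathbb D^*}|\tau_a z^a|^2\omega_0=1$, and the $A_a$-estimate already derived controls the tail on $\{|z|\geq R/2\}$, hence $\big|\|\chi_R\tau_a z^a\|^2-1\big|\leq A_a$. For the cross term, Cauchy--Schwarz together with $\|\chi_R\tau_a z^a\|\leq\|\tau_a z^a\|_{\mathbb D^*}=1$ gives $|\langle\chi_R\tau_a z^a,\beta_a\rangle_k|\leq B_a^{1/2}$.

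Adding up, $\big|\|\bar{s}_a\|^2-1\big|\leq A_a+2B_a^{1/2}+B_a$, so the lemma reduces to verifying the two pointwise inequalities $A_a\leq B_a^{1/2}$ and $B_a\leq 1$ throughout the range $a<k^{3/4}$; once both hold the bound $4B_a^{1/2}$ follows immediately. The main obstacle is precisely this bookkeeping step. The idea is that $B_a$ is increasing in $a$ on $\{a\leq \tfrac{k-1}{-2\log(R/2)}\}$, and under assumption 1 one has $k^{3/4}\leq \tfrac{k}{-2e\log(R/2)}$, placing the entire range $a<k^{3/4}$ inside the regime where the smallness estimate for $B_a$ at $a=\tfrac{k}{-2e\log(R/2)}$ (recorded just before the lemma) applies, so that $B_a\leq 1$. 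The same range is where the comparison $A_a\leq B_a^{1/2}$ was already observed. Together these yield $|\|\bar{s}_a\|^2-1|\leq 4B_a^{1/2}$, completing the proof.
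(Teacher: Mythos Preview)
Your argument is essentially the paper's own proof: expand $\|\bar s_a\|^2$ bilinearly, bound the three pieces by $A_a$, $B_a$, and $B_a^{1/2}$ via Cauchy--Schwarz, and then absorb $A_a+B_a$ into $2B_a^{1/2}$ using the comparison $A_a\le B_a^{1/2}$ and the smallness of $B_a$ in the range $a<k^{3/4}$.

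One small bookkeeping slip: the inclusion $k^{3/4}\le \dfrac{k}{-2e\log(R/2)}$ that places the whole range inside the monotone regime is \emph{not} a consequence of assumption~1 (which concerns $\epsilon+\lambda/k$), but of assumption~2, since $k\ge(-9\log(R/2))^4$ gives $k^{1/4}\ge -2e\log(R/2)$. Likewise the inequality $A_a\le B_a^{1/2}$ is stated in the paper under assumptions~2 and~3. The paper itself is loose here (its proof just says ``the conclusion follows with the assumption on $k$''), so this does not affect the substance of your argument; just cite the correct hypotheses.
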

\begin{proof}
	\begin{eqnarray*}
		\parallel \bar{s}_a\parallel^2=\parallel \chi_R\tau_az^a\parallel^2+\parallel \beta_a\parallel^2+2\Re \int \chi_R\tau_az^a\bar{\beta_a}\omega
	\end{eqnarray*}
where $1>\parallel \chi_R\tau_az^a\parallel^2>1-A_a$, and $\parallel \beta_a\parallel^2<B_a$. So 
$|\Re \int \chi_R\tau_az^a\bar{\beta_a}\omega|\leq B_a^{1/2}$. And the conclusion follows with the assumption on $k$.
\end{proof}
Similarly, one can easily prove the following: 
\begin{lem}
	For $a<b\leq k^{3/4}$, 
	$$|<\bar{s}_a,\bar{s}_b>|\leq 3B_b^{1/2}$$
\end{lem}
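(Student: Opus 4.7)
The plan is to expand $\langle \bar{s}_a, \bar{s}_b\rangle$ bilinearly using $\bar{s}_a=\chi_R\tau_a z^a-\beta_a$ into four pieces, show that the leading piece vanishes exactly by angular orthogonality, and control the three remaining cross pieces by Cauchy--Schwarz together with the Hörmander bound $\|\beta_a\|^2\leq B_a$ and the normalization $\|\tau_a z^a\|=1$. This mirrors the argument already used to prove $|\|\bar{s}_a\|^2-1|\leq 4B_a^{1/2}$, just with two distinct indices.

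The key cancellation is that
$$\langle \chi_R\tau_a z^a,\chi_R\tau_b z^b\rangle=\tau_a\tau_b\int_{V_i}\chi_R^2\, z^a\bar{z}^b\, e^{-k\Phi}\omega=0\quad\text{whenever }a\neq b,$$
because $\chi_R$, $e^{-k\Phi}$ and $\omega$ are all radial in $z$, while $z^a\bar{z}^b=r^{a+b}e^{i(a-b)\theta}$ contributes a nontrivial phase whose $\theta$-integral vanishes. This is exactly the mechanism that produced the $\delta_{ab}$ in Lemma~\ref{integral}, and since $\chi_R$ is supported inside $V_i$, no globalization issues arise.

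For the remaining three terms, $|\chi_R|\leq 1$ gives $\|\chi_R\tau_a z^a\|\leq \|\tau_a z^a\|=1$, so Cauchy--Schwarz yields
$$|\langle \chi_R\tau_a z^a,\beta_b\rangle|\leq B_b^{1/2},\quad |\langle \beta_a,\chi_R\tau_b z^b\rangle|\leq B_a^{1/2},\quad |\langle \beta_a,\beta_b\rangle|\leq B_a^{1/2}B_b^{1/2}.$$
I would then invoke the monotonicity of $B_a$ already recorded in the excerpt: the ratio $B_{a+1}/B_a=(1+1/a)^{k-1}(R/2)^2$ exceeds $1$ for $a\leq (k-1)/(-2\log(R/2))$, and assumption 2 ensures $k^{3/4}$ lies inside this range, so $B_a\leq B_b$ for $1\leq a<b\leq k^{3/4}$. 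Under assumption 1 (as in the preceding lemma) $B_b$ is small, in particular $B_b\leq 1$, so the fourth term is bounded by $B_a^{1/2}B_b^{1/2}\leq B_b\leq B_b^{1/2}$. Summing the three contributions gives $|\langle \bar{s}_a,\bar{s}_b\rangle|\leq 3B_b^{1/2}$.

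The only nontrivial check is that $k^{3/4}$ sits to the left of the turning point $(k-1)/(-2\log(R/2))$ so that the monotonicity $B_a\leq B_b$ can be applied; this is where the size assumptions on $k$ enter. Apart from this verification, the proof is a direct bilinear expansion plus Cauchy--Schwarz, so I do not expect any real obstacle.
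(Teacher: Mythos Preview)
Your proof is correct and is exactly the argument the paper has in mind; the paper in fact gives no separate proof, merely writing ``Similarly, one can easily prove the following,'' so your bilinear expansion with the angular orthogonality killing the main term and Cauchy--Schwarz plus the monotonicity $B_a\le B_b$ handling the rest is precisely the intended route.
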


We denote by
$$\mu_b^{-2}=\parallel \bar{s}_b\parallel^2-\sum_{a<b}|<\bar{s}_b,s_a>|^2 $$
Then $$\frac{s_b}{\mu_b}=\bar{s}_b-\sum_{a<b}<\bar{s}_b,s_a>s_a$$

\begin{lem}Assume that $B_{k^{3/4}}\leq k^{-3/2}/256$ then
	for $a<b\leq k^{3/4}$, 
	\begin{eqnarray*}
		|<\bar{s}_b,s_a>|&\leq& 4\sqrt{B_b}\\
		|\mu_b^{-2}-1|&\leq& 5B_b^{1/2}
	\end{eqnarray*}
\end{lem}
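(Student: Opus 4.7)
\medskip

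The plan is to prove both estimates simultaneously by induction on $b$, using the Gram--Schmidt relations
\begin{equation*}
s_a=\mu_a\Bigl(\bar{s}_a-\sum_{c<a}\langle\bar{s}_a,s_c\rangle s_c\Bigr),\qquad \mu_b^{-2}=\|\bar{s}_b\|^2-\sum_{a<b}|\langle\bar{s}_b,s_a\rangle|^2,
\end{equation*}
together with the two preceding lemmas ($|\langle\bar{s}_a,\bar{s}_b\rangle|\le 3\sqrt{B_b}$ and $|\|\bar{s}_b\|^2-1|\le 4\sqrt{B_b}$) and the standing smallness $\sqrt{B_a}\le\sqrt{B_{k^{3/4}}}\le 1/(16k^{3/4})$ for all $a\le k^{3/4}$, which is exactly the hypothesis $B_{k^{3/4}}\le k^{-3/2}/256$. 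The base case $b=1$ is immediate: the first inequality is vacuous, and $\mu_1^{-2}=\|\bar{s}_1\|^2$ so $|\mu_1^{-2}-1|\le 4\sqrt{B_1}\le 5\sqrt{B_1}$.

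For the inductive step at level $b$, assume the lemma at every level $b'<b$. To prove the first inequality, I would run a second induction on $a$ (for fixed $b$). Pairing $\bar{s}_b$ with the Gram--Schmidt expression for $s_a$ gives
\begin{equation*}
\langle\bar{s}_b,s_a\rangle=\mu_a\langle\bar{s}_b,\bar{s}_a\rangle-\mu_a\sum_{c<a}\overline{\langle\bar{s}_a,s_c\rangle}\,\langle\bar{s}_b,s_c\rangle.
\end{equation*}
The outer induction supplies $|\langle\bar{s}_a,s_c\rangle|\le 4\sqrt{B_a}$ (applied at level $a<b$), while the inner induction supplies $|\langle\bar{s}_b,s_c\rangle|\le 4\sqrt{B_b}$ for $c<a$. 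Combined with the $3\sqrt{B_b}$ bound from the previous lemma, this yields
\begin{equation*}
|\langle\bar{s}_b,s_a\rangle|\le \mu_a\bigl(3\sqrt{B_b}+16(a-1)\sqrt{B_aB_b}\bigr)\le \mu_a\bigl(3+16k^{3/4}\sqrt{B_a}\bigr)\sqrt{B_b}\le 4\mu_a\sqrt{B_b},
\end{equation*}
since $16k^{3/4}\sqrt{B_a}\le 1$. For the second inequality, using the Gram--Schmidt identity for $\mu_b^{-2}$ together with the just-proved bound,
\begin{equation*}
|\mu_b^{-2}-1|\le \bigl|\|\bar{s}_b\|^2-1\bigr|+\sum_{a<b}|\langle\bar{s}_b,s_a\rangle|^2\le 4\sqrt{B_b}+16k^{3/4}B_b\le 5\sqrt{B_b},
\end{equation*}
since $16k^{3/4}B_b=(16k^{3/4}\sqrt{B_b})\sqrt{B_b}\le\sqrt{B_b}$.

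The only subtle point is the factor $\mu_a$ in $4\mu_a\sqrt{B_b}$, which must be absorbed to close the induction with the clean constant $4$. This is where the precise strength of the hypothesis $B_{k^{3/4}}\le k^{-3/2}/256$ matters: the outer inductive hypothesis $|\mu_a^{-2}-1|\le 5\sqrt{B_a}$ combined with $\sqrt{B_a}\le 1/(16k^{3/4})$ forces $\mu_a=1+O(k^{-3/4})$, so the inequality $\mu_a(3+16k^{3/4}\sqrt{B_a})\le 4$ is in fact strict with room to spare once one uses the slightly sharper estimate from the proof of the previous lemma (the dominant contribution to $\langle\bar{s}_a,\bar{s}_b\rangle$ is really $2\sqrt{B_b}$ rather than $3\sqrt{B_b}$, the extra slack being the quadratic $\|\beta_a\|\|\beta_b\|$ term). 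If desired, one can instead formalize this by running the induction with the stronger hypothesis $|\langle\bar{s}_b,s_a\rangle|\le (4-\eta_k)\sqrt{B_b}$ for an $\eta_k=O(k^{-3/4})$, which the preceding slack easily accommodates, thereby closing the induction cleanly.
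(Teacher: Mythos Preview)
Your proof is essentially the paper's proof: induction on $b$, the Gram--Schmidt identity
\[
\langle\bar{s}_b,s_a\rangle=\mu_a\Bigl(\langle\bar{s}_b,\bar{s}_a\rangle-\sum_{i<a}\overline{\langle\bar{s}_a,s_i\rangle}\,\langle\bar{s}_b,s_i\rangle\Bigr),
\]
and the numerical bound $(1+5\sqrt{B_a})\bigl(3+16(a-1)\sqrt{B_a}\bigr)\sqrt{B_b}\le 4\sqrt{B_b}$, followed by the $\mu_b^{-2}$ estimate via $4\sqrt{B_b}+16(b-1)B_b\le 5\sqrt{B_b}$. The paper simply asserts the final inequality $\le 4\sqrt{B_b}$ without comment; you are right that with the constants as written it is borderline (one gets $4+O(k^{-3/4})$ rather than exactly $4$), and your observation that the previous lemma really gives $|\langle\bar{s}_a,\bar{s}_b\rangle|\le 2\sqrt{B_b}+O(B_b)$ (the first cross term vanishes by angular integration) supplies the missing slack. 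So your argument is the paper's, handled with slightly more care.
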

\begin{rem}
	This assumption on $k$ is weaker than assumptions 1 to 3 combined.
\end{rem}
\begin{proof}
	We use induction on $b$. When $b=1$, it has been proved. 
	Now $$<\bar{s}_b,s_a>=\mu_a(<\bar{s}_a,\bar{s}_b>+\sum_{i=1}^{a-1}<\bar{s}_a,s_i><\bar{s}_b,s_i>)$$
	By induction, $|\mu_a-1|\leq 3B_a^{1/2}$, so
	\begin{eqnarray*}
		|<\bar{s}_b,s_a>|&\leq& (1+5B_a^{1/2})[3B_b^{1/2}+\sum_{i=1}^{a-1}16B_a^{1/2}B_b^{1/2}]\\
	&\leq& (1+5B_a^{1/2})[3B_b^{1/2}+16(a-1)B_{a}^{1/2}B_b^{1/2}]	\\
	&\leq & 4B_b^{1/2}
	\end{eqnarray*}
Then
\begin{eqnarray*}
	|\mu_b^{-2}-1|&\leq& 4B_b^{1/2}+\sum_{i=1}^{a-1}16B_b\\
	&\leq & 5B_b^{1/2}
\end{eqnarray*}
	
\end{proof}

For each section $\alpha$ of $L^k$ that is well defined for $|z|\leq R/2$, we define 
$$\parallel \alpha \parallel_R=(\int_{|z|\leq R/2}|\alpha|_h^2\omega)^{1/2} $$

\begin{nota}
	For simplicity of notations, in the following we will use absolute value to denote the point-wise norm of sections of $L^k$, namely, when $\alpha$ is a section of $L^k$, $|\alpha|$ means $|\alpha|_h$.
\end{nota}

\begin{lem}\label{linfinity}
	Assume assumptions 1 to 3.
	Let $\alpha$ be a holomorphic function on $U$, considered as a local section of $L^k$ .
	Assume $\parallel \alpha \parallel_R^2\leq \nu$. Then for all $|z|^2\leq e^{-(k-2)^{3/8}}$, we have 
	$$|\alpha|^2\leq 4\nu \rho_{0,k}$$
\end{lem}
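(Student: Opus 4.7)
The plan is to expand $\alpha$ in its Taylor series at the origin and to exploit the orthogonality of the monomial basis (as in Lemma~\ref{integral}) restricted to the truncated disk $\{|z|\le R/2\}$. Finiteness of $\|\alpha\|_R$ together with the divergence of the integrand near $0$ forces the expansion to start at order $a\ge 1$, so $\alpha(z)=\sum_{a\ge 1}c_a z^a$. The angular integration used in Lemma~\ref{integral} still yields orthogonality on any centred disk, and the substitution $t=-\log|z|^2$ gives
\[
\|z^a\|_R^2=\frac{2\pi}{a^{k-1}}\,\Gamma(k-1,at_0),\qquad t_0:=-2\log(R/2),
\]
so that $\sum_{a\ge 1}|c_a|^2\|z^a\|_R^2=\|\alpha\|_R^2\le\nu$.

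Applying pointwise Cauchy--Schwarz to the Taylor series gives
\[
|\alpha(z)|^2\le\nu\,K_R(z),\qquad K_R(z):=\sum_{a\ge 1}\frac{|z|^{2a}}{\|z^a\|_R^2};
\]
multiplying by the weight factor $t^k$ and using $\rho_{0,k}(z)=t^k\sum\tau_a^2|z|^{2a}$ with $\tau_a^2=1/\|z^a\|_k^2=a^{k-1}/(2\pi(k-2)!)$, the target reduces to the weight-free inequality $K_R(z)\le 4\sum_{a\ge 1}\tau_a^2|z|^{2a}$. I split $K_R$ at the cutoff $a_*:=\lfloor(k-1)/(et_0)\rfloor$, which under assumption~2 is at least of order $k^{3/4}$. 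For $a\le a_*$ one has $at_0\le(k-1)/e$, and the Stirling bound $(at_0)^{k-1}/(k-1)!\le 1/\sqrt{2\pi(k-1)}\le 1/2$ gives $\Gamma(k-1,at_0)\ge \Gamma(k-1)/2$, hence $\|z^a\|_R^2\ge\tfrac12\|z^a\|_k^2$ and termwise $|z|^{2a}/\|z^a\|_R^2\le 2\tau_a^2|z|^{2a}$; summing, the head of $K_R$ is at most $2\sum_{a\ge 1}\tau_a^2|z|^{2a}$.

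For the tail $a>a_*$, the identity $\Gamma(k-1,x)=x^{k-2}e^{-x}+(k-2)\Gamma(k-2,x)\ge x^{k-2}e^{-x}$ gives the universal lower bound $\|z^a\|_R^2\ge 2\pi t_0^{k-2}e^{-at_0}/a$ for every $a\ge 1$. Since by hypothesis $t\ge(k-2)^{3/8}$ and by assumption~2 $t_0\le O(k^{1/4})$, one has $t-t_0\ge\tfrac12 t$, well above $\log 2$, and the tail of $K_R$ is a geometric-type sum dominated by its first term, of order $a_*e^{-a_*(t-t_0)}/t_0^{k-2}$. The main obstacle is to compare this exponentially small tail with $\sum\tau_a^2|z|^{2a}=\rho_{0,k}/t^k$: the naive lower bound $\tau_1^2|z|^2=e^{-t}/(2\pi(k-2)!)$ is too weak for $t$ just above $(k-2)^{3/8}$, so one must instead use the peak of the series at $a_p\sim(k-1)/t$ to obtain $\sum\tau_a^2|z|^{2a}\ge c k/t^k$ for a universal constant $c$ (which matches the ``outside'' asymptotics of Theorem~\ref{outside}). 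In logarithmic form the needed inequality becomes $a_*(t-t_0)\ge k\log(t/t_0)+O(\log k)$; the left side is of order $k^{3/4}\cdot(k-2)^{3/8}$, namely $k^{9/8}$, while the right side is $O(k\log k)$ (one uses $t_0\ge 2\log 2$, which follows from $R\le 1/2$, to bound $\log(t/t_0)\le\log k$), and assumption~3 ($k\ge 23190$) provides the required numerical margin. Adding the head and tail contributions yields $K_R(z)\le 4\sum_{a\ge 1}\tau_a^2|z|^{2a}$, which gives $|\alpha(z)|^2\le 4\nu\rho_{0,k}(z)$ and completes the proof.
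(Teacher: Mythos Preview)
Your overall strategy—reducing the statement to the pointwise inequality $K_R(z)\le 4\sum_a\tau_a^2|z|^{2a}$ via Cauchy--Schwarz, and then splitting at $a_*\sim(k-1)/(et_0)$—is sound, and your head estimate (the incomplete--Gamma bound $\Gamma(k-1,at_0)\ge\tfrac12(k-2)!$ for $a\le a_*$) is correct and actually cleaner than what the paper does. The gap is entirely in the tail.

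The tail bound hinges on your claimed lower bound $\sum_a\tau_a^2|z|^{2a}\ge ck/t^k$ with a \emph{universal} $c$, i.e.\ $\rho_{0,k}(z)\ge ck$. This is false on the range $t\ge(k-2)^{3/8}$: by Theorem~\ref{others-inside}, between the lattice points (say $t\in(\frac{k}{2},k)$, the case $a=1$) one has $\rho_{0,k}\le e^{-k/17}\cdot O(k^{3/2})$, which is exponentially small in $k$. The ``peak'' argument you invoke only works when the Gaussian width $\sqrt{k-1}/t$ in the $a$-variable is $\gtrsim 1$, i.e.\ for $t\lesssim\sqrt{k}$; beyond that, the nearest integer to $(k-1)/t$ can miss the peak by an amount costing a factor $e^{-t^2/8(k-1)}$. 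Your appeal to Theorem~\ref{outside} does not help: its error term $(1+(2\pi/t)^2)^{-(k+1)/2}$ is \emph{not} small once $t$ is of order $k$. A corrected argument would need a genuine case split—say, for $t\lesssim\sqrt{k}$ use the peak bound, and for larger $t$ show that the naive bound $\tau_1^2e^{-t}$ suffices because the tail $\sim a_*e^{-a_*(t-t_0)}/t_0^{k-2}$ has decayed much further (the exponent $a_*(t-t_0)$ is then at least of order $k^{7/4}$). Even then, recovering the exact constant $4$ requires care you have not supplied.

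The paper avoids this difficulty altogether. For the tail $a>a_0=(k-2)/(-2\log(R/2))$ it does \emph{not} seek a lower bound for $\rho_{0,k}$; instead it fixes a reference index $b=(k-2)^{5/8}$, introduces the annulus $T=\{(k-2)^{3/8}/2\le -\log|z|^2\le (k-2)^{3/8}\}$ on which $\int_T|\tau_bz^b|^2_h\omega\ge 1/3$, and uses two $L^2$ estimates on each term $c_a\tau_az^a$ (one on $T$, one on $\{|z|\le R/2\}$) to bound the ratio $|c_a\tau_az^a|/|\tau_bz^b|$ at the inner edge of $T$. Monotonicity of $|z|^{a-b}$ then propagates this bound throughout $W_k$, and summation yields $|\sum_{a>a_0}c_a\tau_az^a|^2\le\tfrac{1}{24}\nu|\tau_bz^b|^2\le\tfrac{1}{24}\nu\rho_{0,k}$. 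Since $|\tau_bz^b|^2$ is automatically $\le\rho_{0,k}$, no lower bound on $\rho_{0,k}$ is ever needed; combined with the head bound $3\nu\rho_{0,k}$, the triangle inequality $(\sqrt{3}+\sqrt{1/24})^2<4$ gives the constant.
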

\begin{proof}
Write $\alpha=\sum_{i=1}^{\infty}c_i\tau_iz^i$. Then the assumption on the $L_2$ norms gives $$\sum_{i=1}^{\infty}|c_i|^2(1-A_i)\leq \nu$$
With the hypothesis of the lemma, we have that $A_i\leq \frac{2}{3}$
when $i\leq a_0= \frac{k-2}{-2\log(R/2)}$. So 
$$\sum_{1\leq i\leq a_0}|c_i|^2\leq 3\nu$$
Therefore by Cauchy-Schwartz inequality $$|\sum_{1\leq i\leq a_0}c_i\tau_iz^i|^2\leq 3\nu \rho_{0,k}$$
For $a\geq a_0$, we use the technique used in \cite{SunSun}. Let $b=(k-2)^{5/8}$,  $q_a(z)=|\frac{c_a\tau_az^a}{\tau_bz^b}|$ and $\gamma$ be the value of $q_a(z)$ evaluated at the points where $\log \frac{1}{|z|^2}=(k-2)^{3/8}$.
We define
$$T=\{z: (k-2)^{3/8}/2\leq \frac{1}{|z|^2}\leq (k-2)^{3/8} \}$$
Then we have
$$\int_T |\tau_bz^b|^2_h\omega \geq 1/3$$
, therefore
$$\int_T |c_a\tau_az^a|^2_h\omega \geq \gamma_a^2/3$$
On the other hand, we can estimate
\begin{eqnarray*}
	\int_T |c_a\tau_az^a|^2_h\omega &=&2\pi|c_a\tau_a|^2\int_{(k-2)^{3/8}/2}^{(k-2)^{3/8}}e^{(k-2)\log t-at}dt\\
	&\leq& 2\pi|c_a\tau_a|^2\frac{(k-2)^{3/8}}{2^{k-1}}(k-2)^{3(k-2)/8}e^{-a(k-2)^{3/8}/2}\\
	&=&\pi|c_a\tau_a|^22^{2-k}(k-2)^{3(k-1)/8}e^{-a(k-2)^{3/8}/2}
\end{eqnarray*}

Also we have
\begin{eqnarray*}
	\nu &\geq & \int_{|z|\leq R/2}|c_a\tau_az^a|^2_h\omega\\
	&\geq & 2\pi|c_a\tau_a|^2(-2\log (R/2))e^{(k-2)\log (-4\log (R/2))+4a\log(R/2)}\\
	&=& \pi|c_a\tau_a|^2(\frac{R}{2})^{4a}(-4\log\frac{R}{2})^{k-1}
\end{eqnarray*}
All combined, we get that
$$\frac{\gamma_a^2}{3}\leq \nu (\frac{R}{2})^{-4a}a^{4-3k}[\frac{(k-2)^{3/8}}{-\log(R/2)}]^{k-1}e^{-a(k-2)^{3/8}/2}$$

So when $|z|^2\leq e^{-(k-2)^{3/8}}$, for $a\geq a_0$, we have
$$|c_a\tau_az^a|^2\leq 3\nu \rho_{0,k}(\frac{R}{2})^{-4a}2^{4-3k}[\frac{(k-2)^{3/8}}{-\log(R/2)}]^{k-1}e^{-a(k-2)^{3/8}/2}$$
So
\begin{eqnarray*}
	|\sum_{a\geq a_0}c_a\tau_az^a|^2 &\leq& (\sum_{a\geq a_0}|c_a\tau_az^a|)^2  \\
	&\leq&3\nu|\tau_bz^b|^2[\frac{(k-2)^{3/8}}{-\log(R/2)}]^{k-1}2^{4-3k}	(\sum_{a\geq a_0}(\frac{R}{2})^{-2a}e^{-a(k-2)^{3/8}/4})^2\\
	&\leq &3\nu |\tau_bz^b|^2[\frac{(k-2)^{3/8}}{-\log(R/2)}]^{k-1}2^{4-3k}u^{2a_0}/(1-u)^2\\
		&\leq &\frac{1}{24}\nu |\tau_bz^b|^2
\end{eqnarray*}
where $u=(\frac{R}{4})^{-2}e^{-(k-2)^{3/8}/4}$, and the last inequality holds with the given assumption that 
$$(k-2)^{3/8}\geq -8\log(R/4)+4\log 2$$
which is weaker than assumptions 2 and 3 combined.

All together we get the conclusion.
\end{proof}

\begin{lem}\label{l2}
	Assume $B_{k^{3/4}}\leq k^{-3/2}/256$.
	For $a\leq k^{3/4}$, 
	$$\parallel s_a-\tau_az^a \parallel_R^2\leq  125aB_a$$

\end{lem}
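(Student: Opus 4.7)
My plan is to unfold the Gram--Schmidt construction of $s_b$ on the region $\{|z|\le R/2\}$, where the cutoff satisfies $\chi_R\equiv 1$. There we have $\bar s_b=\tau_b z^b-\beta_b$, so the formula $s_b=\mu_b\bar s_b - \mu_b\sum_{a<b}\langle \bar s_b, s_a\rangle s_a$ rearranges on this region to
\[
 s_b-\tau_b z^b \;=\; (\mu_b-1)\,\tau_b z^b \;-\; \mu_b\,\beta_b \;-\; \mu_b\sum_{a<b}\langle \bar s_b, s_a\rangle\, s_a.
\]
I would then apply the triangle inequality for $\|\cdot\|_R$ and bound the three resulting pieces, squaring at the end.

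The first two pieces are routine quotes of estimates already in hand. From $|\mu_b^{-2}-1|\le 5B_b^{1/2}$ in the previous lemma together with the smallness assumption $B_{k^{3/4}}\le k^{-3/2}/256$, I get both $|\mu_b-1|$ and $\mu_b-1$ of order $B_b^{1/2}$ with small explicit constants; combined with $\|\tau_b z^b\|_R\le\|\tau_b z^b\|=1$ and $\|\beta_b\|_R\le\|\beta_b\|\le B_b^{1/2}$, these two pieces together contribute $O(B_b^{1/2})$ to $\|s_b-\tau_b z^b\|_R$, which is harmless on squaring.

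The third piece is the essential one, and where the main obstacle lies. A naive $\ell^1$ bound $\mu_b\sum_{a<b}|\langle\bar s_b,s_a\rangle|\,\|s_a\|_R$ would lose a factor of $\sqrt{b}$, because $\|s_a\|_R\le 1$ is not itself small and the number of summands is $b-1$; this would yield only $b^2 B_b$ after squaring. The key move is to discard the restriction to $\{|z|\le R/2\}$ and exploit that $\{s_a\}$ is orthonormal with respect to the full norm $\|\cdot\|$, so Pythagoras gives
\[
\Big\|\mu_b\sum_{a<b}\langle \bar s_b, s_a\rangle\,s_a\Big\|_R \;\le\; \mu_b\Big(\sum_{a<b}|\langle \bar s_b, s_a\rangle|^2\Big)^{1/2} \;\le\; 4\mu_b\sqrt{(b-1)B_b},
\]
using $|\langle\bar s_b,s_a\rangle|\le 4B_b^{1/2}$ from the previous lemma.

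Summing the three contributions and squaring (using $(u+v)^2\le 2u^2+2v^2$ to absorb the $O(B_b)$ remainder into the dominant $bB_b$ term, which is legitimate for $b\ge 1$) yields the claimed bound $\|s_b-\tau_b z^b\|_R^2\le 125\,bB_b$; the constant $125$ is generous slack designed to make the arithmetic painless. The only non-routine step is the $\ell^2$-versus-$\ell^1$ reduction for the Gram--Schmidt correction; everything else is direct substitution of the estimates proved earlier in the section.
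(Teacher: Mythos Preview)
Your proposal is correct and follows essentially the same route as the paper: the paper groups $\beta_a$ and the Gram--Schmidt correction into a single term $\eta_a$ and expands $s_a-\tau_a z^a=(\mu_a-1)\tau_a z^a-\mu_a\eta_a$, but the crucial step is identical to yours---bounding $\big\|\sum_{i<a}\langle\bar s_a,s_i\rangle s_i\big\|_R$ by passing to the full norm and using orthonormality (Pythagoras) to obtain $\big(\sum_{i<a}|\langle\bar s_a,s_i\rangle|^2\big)^{1/2}\le 4\sqrt{(a-1)B_a}$. The remaining arithmetic that assembles the pieces into the constant $125$ is the same in spirit.
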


\begin{proof}
	When $|z|\leq R/2$, $\bar{s}_a=\tau_az^a-\beta_a$. So $s_a=\mu_a(\tau_az^a-\beta_a)-\mu_a\sum_{i=1}^{a-1}<\bar{s}_a,s_i>s_i$. 
	We denote by $\eta_a=\beta_a+\sum_{i=1}^{a-1}<\bar{s}_a,s_i>s_i$
	Then $$	|s_a-\tau_az^a|^2\leq |\mu_a-1|^2|\tau_az^a|^2+|\mu_a|^2|\eta_a|^2+2|(\mu_a-1)\mu_a\eta_a\tau_az^a|$$
	and 
	\begin{eqnarray*}
			\parallel \eta_a \parallel_R^2&\leq &	\parallel \beta_a \parallel_R^2+\mu_a^2	\parallel \sum_{i=1}^{a-1}<\bar{s}_a,s_i>s_i\parallel_R^2\\&&+2\mu_a	\parallel \beta_a \parallel_R	\parallel \sum_{i=1}^{a-1}<\bar{s}_a,s_i>s_i \parallel_R\\
			&\leq &	B_a+\mu_a^2	 \sum_{i=1}^{a-1}|<\bar{s}_a,s_i>|^2+2\mu_a B_a^{1/2}(\sum_{i=1}^{a-1}|<\bar{s}_a,s_i>|^2)^{1/2}\\
				&\leq &	B_a+(1+10B_a^{1/2})	 16(a-1)B_a+2\mu_a B_a^{1/2}(16(a-1)B_a)^{1/2}\\
					&\leq &	25aB_a
	\end{eqnarray*}
	So
	\begin{eqnarray*}
	\parallel s_a-\tau_az^a \parallel_R^2&\leq&  |\mu_a-1|^2+|\mu_a|^2|	\parallel \eta_a \parallel_R^2+2(\mu_a-1)\mu_a	\parallel \eta_a \parallel_R\\
	&\leq& 25B_a+(1+10B_a^{1/2})25aB_a+10B_a^{1/2}(1+5B_a^{1/2})5\sqrt{a}B_a^{1/2}\\
	&\leq& 125aB_a
	\end{eqnarray*}
\end{proof}
\begin{lem}
		Assume assumptions 1 to 3, then for $a\leq k^{3/4}$, 
	$$	|\sum_{a\leq k^{3/4}}|s_a|^2-\sum_{a\leq k^{3/4}}|\tau_az^a|^2|	\leq 50k^{3/4}B_{k^{3/4}}^{1/2}\rho_{0,k}$$
	Here, we have considered $s_a$ as holomorphic functions.
\end{lem}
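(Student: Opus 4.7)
The plan is to upgrade the $L^2$ estimate from Lemma \ref{l2} to a pointwise estimate on $W_k$ by feeding it into Lemma \ref{linfinity}: since $\|s_a - \tau_a z^a\|_R^2 \leq 125 a B_a$, for $|z|^2 \leq e^{-(k-2)^{3/8}}$ we obtain
$$|s_a - \tau_a z^a|^2 \leq 500\, a\, B_a\, \rho_{0,k}.$$
This is the one input that ties the difference $\rho_k - \rho_{0,k}$ to $B_{k^{3/4}}$ in a multiplicative way against $\rho_{0,k}$.

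Then I would use the elementary identity $||s_a|^2 - |\tau_a z^a|^2| \leq |s_a - \tau_a z^a|\,(|s_a| + |\tau_a z^a|)$, sum over $a \leq k^{3/4}$, and apply Cauchy--Schwarz:
$$\sum_{a\leq k^{3/4}} \big| |s_a|^2 - |\tau_a z^a|^2 \big| \leq \Big(\sum_a |s_a - \tau_a z^a|^2\Big)^{1/2}\Big(\sum_a (|s_a|+|\tau_a z^a|)^2\Big)^{1/2}.$$
For the first factor, I would exploit the fact, already noted in the paper after the definition of $B_a$, that $B_a$ is increasing on $a \leq (k-1)/(-2\log(R/2))$, an interval which contains $[1,k^{3/4}]$ under assumption 3. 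This gives $\sum_{a\leq k^{3/4}} a B_a \leq k^{3/4}\cdot k^{3/4} B_{k^{3/4}} = k^{3/2} B_{k^{3/4}}$, hence
$$\Big(\sum_a |s_a - \tau_a z^a|^2\Big)^{1/2} \leq \sqrt{500}\, k^{3/4}\, B_{k^{3/4}}^{1/2}\, \rho_{0,k}^{1/2}.$$

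For the second factor, Minkowski's inequality yields $(\sum |s_a|^2)^{1/2} \leq (\sum |\tau_a z^a|^2)^{1/2} + (\sum |s_a-\tau_a z^a|^2)^{1/2}$, and since $\sum_{a\leq k^{3/4}} |\tau_a z^a|^2 \leq \rho_{0,k}$, this is bounded by $\sqrt{\rho_{0,k}}\,(1 + \sqrt{500\, k^{3/2} B_{k^{3/4}}})$. Under assumptions 1--3 the quantity $k^{3/2} B_{k^{3/4}}$ is extremely small (assumption 4 alone implies $B_{k^{3/4}} \ll k^{-3/2}$), so the parenthesis is essentially $1$, and one gets $\sum(|s_a|+|\tau_a z^a|)^2 \leq 4\rho_{0,k}(1+o(1))$.

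Combining gives
$$\sum_{a\leq k^{3/4}} \big||s_a|^2 - |\tau_a z^a|^2\big| \leq \big(2\sqrt{500} + o(1)\big)\, k^{3/4}\, B_{k^{3/4}}^{1/2}\, \rho_{0,k} \leq 50\, k^{3/4}\, B_{k^{3/4}}^{1/2}\, \rho_{0,k},$$
since $2\sqrt{500}\approx 44.7$ and the smallness of $B_{k^{3/4}}$ absorbs the correction into the numerical constant $50$. The only real obstacle is careful bookkeeping of constants; in particular, one must check that the Minkowski step does not require control of $\rho_k$ (it does not, because $\sum |s_a|^2$ is bounded pointwise by subadditivity from $\sum |\tau_a z^a|^2$ plus the small remainder $\sum |s_a-\tau_a z^a|^2$), and that $k^{3/4}$ lies in the monotonic range of $B_a$, both guaranteed by the standing hypotheses 1--3.
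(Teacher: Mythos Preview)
Your proof is correct and is essentially the same as the paper's: both feed Lemma~\ref{l2} into Lemma~\ref{linfinity} to get $|s_a-\tau_a z^a|^2\le 500\,aB_a\,\rho_{0,k}$, then sum using Cauchy--Schwarz together with the monotonicity $\sum_{a\le k^{3/4}} aB_a\le k^{3/2}B_{k^{3/4}}$. The only cosmetic difference is that the paper expands $|s_a|^2-|\tau_a z^a|^2=|s_a-\tau_a z^a|^2+2\Re\big((s_a-\tau_a z^a)\overline{\tau_a z^a}\big)$ and bounds termwise before summing, whereas you factor and apply Cauchy--Schwarz globally; both routes yield the identical bound $2\sqrt{500}\,k^{3/4}B_{k^{3/4}}^{1/2}\rho_{0,k}+500\,k^{3/2}B_{k^{3/4}}\rho_{0,k}$, and the paper absorbs the second term using $B_{k^{3/4}}\le k^{-3/2}/100^2$ (implied by assumptions 1--3), exactly as you do.
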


\begin{proof}

Combining lemmas \ref{linfinity} and  \ref{l2}, we see that for $a\leq k^{3/4}$,
$$||s_a|^2-|\tau_az^a|^2|\leq 500aB_a\rho_{0,k}+4\sqrt{125aB_a\rho_{0,k}}|\tau_az^a|$$ 
Therefore if we denote by $D(k^{3/4})=	|\sum_{a\leq k^{3/4}}|s_a|^2-\sum_{a\leq k^{3/4}}|\tau_az^a|^2|$, then 
\begin{eqnarray*}
	D(k^{3/4})&\leq & 500\rho_{0,k}\sum_{a\leq k^{3/4}}aB_a+4\sqrt{125\rho_{0,k}}\sum_{a\leq k^{3/4}}|\tau_az^a|(aB_a)^{1/2}\\ 
	&\leq &500\rho_{0,k}\sum_{a\leq k^{3/4}}aB_a+4\sqrt{125\rho_{0,k}}(\sum_{a\leq k^{3/4}}|\tau_az^a|^2\sum_{a\leq k^{3/4}}aB_a)^{1/2}\\
	&\leq &500\rho_{0,k}k^{3/2}B_{k^{3/4}}+4\sqrt{125}\rho_{0,k}k^{3/4}B_{k^{3/4}}^{1/2}\\
	&\leq &50k^{3/4}B_{k^{3/4}}^{1/2}\rho_{0,k}
\end{eqnarray*}	
, where in the last inequality we used the assumption that $B_{k^{3/4}}\leq \frac{k^{-3/2}}{100^2}$, which is weaker than the assumptions 1 to 3 combined.
\end{proof}

Next, we consider the sections $s_j$ for $j>a_2=k^{3/4}$. 
\begin{lem}\label{lem-others}
	Assume assumptions 1 to 3. We have
	$$	\sum_{i>k^{3/4}}|\tau_iz^i|^2\leq 2k^{(k-1)/8}e^{k-k^{9/8}}|\tau_bz^b|^2$$
And for $j>{a_2}$, we have $$|s_j|^2\leq 18{a_2}B_{a_2}\rho_{0,k}$$
	
\end{lem}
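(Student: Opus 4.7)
The plan separates the two assertions. For the first, observe that $|\tau_iz^i|^2=\frac{i^{k-1}}{2\pi(k-2)!}e^{-it}$ with $t=-\log|z|^2$ is a log-concave function of $i$ with unique maximum at $i^{\ast}=(k-1)/t$. In the regime $t\geq (k-2)^{3/8}$ (in particular $z\in W_k$), $i^{\ast}<k^{5/8}<k^{3/4}$, so the summand strictly decreases past $i=k^{3/4}$; the ratio $(1+1/i)^{k-1}e^{-t}\leq e^{k^{1/4}-(k-2)^{3/8}}$ is bounded above by $1/2$ under the size assumptions on $k$. A geometric-series argument then gives $\sum_{i>k^{3/4}}|\tau_iz^i|^2\leq 2|\tau_{k^{3/4}+1}z^{k^{3/4}+1}|^2$, and comparing this single term to $|\tau_bz^b|^2$ with $b=(k-2)^{5/8}$ produces the polynomial ratio $((k^{3/4}+1)/b)^{k-1}\leq k^{(k-1)/8}$ together with the exponential ratio $e^{-(k^{3/4}+1-b)t}\leq e^{k-k^{9/8}}$ at the worst case $t=(k-2)^{3/8}$.

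For the second assertion, expand $s_j=\sum_{a}\gamma_{j,a}\tau_az^a$ on $|z|\leq R/2$. For each $a\leq a_2$, the global orthogonality $\langle s_j,\bar s_a\rangle=0$ combined with $\bar s_a=\chi_R\tau_az^a-\beta_a$ gives $\langle s_j,\chi_R\tau_az^a\rangle=\langle s_j,\beta_a\rangle$. Rotational symmetry of the weight makes the left-hand side equal to $\gamma_{j,a}\Lambda_a$, where $\Lambda_a=\int\chi_R|\tau_az^a|_h^2\omega\geq I_a\geq 1/2$; Cauchy--Schwarz and $\|s_j\|=1$ then give $|\gamma_{j,a}|\leq 2B_a^{1/2}$ and in particular $|\gamma_{j,a}|^2\leq 4B_a$. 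Summing over $a\leq a_2$ with $B_a$ monotonically increasing on this range yields $\sum_{a\leq a_2}|\gamma_{j,a}|^2\leq 4a_2B_{a_2}$, and a Cauchy--Schwarz bound on the low-mode sum controls the low-mode contribution to $|s_j|_h^2$ by $4a_2B_{a_2}\cdot\rho_{0,k}$.

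For the high-mode tail $\sum_{a>a_2}\gamma_{j,a}\tau_az^a$ I would use weighted Cauchy--Schwarz with weights $L_a=\|\tau_az^a\|_{V_1}^2$: the $V_1$-normalization $\sum_a|\gamma_{j,a}|^2L_a\leq \|s_j\|_{V_1}^2\leq 1$ reduces the task to bounding $\sum_{a>a_2}|\tau_az^a|^2/L_a$. Combining the first assertion with the fact that $L_a$ is comparable to $1$ on the range where $|\tau_az^a|^2$ is not yet super-exponentially small shows this contribution is negligible next to $a_2B_{a_2}\rho_{0,k}$. Putting the two pieces together gives the desired bound $|s_j|^2\leq 18a_2B_{a_2}\rho_{0,k}$.

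The main technical point is the high-mode part: since no orthogonality is available for modes $a>a_2$, one must exploit Part~1 and the $V_1$-normalization jointly. By contrast, the low-mode bound is comparatively clean thanks to a key cancellation — rotational symmetry makes the inner product $\langle s_j,\chi_R\tau_az^a\rangle$ equal to $\gamma_{j,a}\Lambda_a$ exactly, with no $A_a^{1/2}$-type correction from the annulus $R/2<|z|\leq R$ that a naive Cauchy--Schwarz would otherwise introduce, and it is this cancellation that produces a bound of the correct size in $B_{a_2}$ rather than the much weaker $B_{a_2}^{1/2}$ or $A_{a_2}$.
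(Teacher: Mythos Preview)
Your Part~1 argument and your low-mode ($a\leq a_2$) argument for Part~2 are both correct and match the paper's approach; in particular, the use of rotational symmetry to identify $\langle s_j,\chi_R\tau_az^a\rangle$ with $\gamma_{j,a}\Lambda_a$ exactly is the key point, and your emphasis on this cancellation is apt.

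The gap is in the high-mode tail. Your weighted Cauchy--Schwarz reduces matters to bounding $\sum_{a>a_2}|\tau_az^a|_h^2/L_a$, and you claim this follows from Part~1 together with ``$L_a$ comparable to $1$ on the range where $|\tau_az^a|^2$ is not yet super-exponentially small.'' But $L_a=\int_{|z|\leq R}|\tau_az^a|_h^2\omega$ is only bounded below (by $1/3$, say) for $a\leq a_1:=\frac{k-2}{-2\log R}$; once $a>a_1$, the mass of $|\tau_az^a|_h^2$ concentrates at $|z|>R$ and $L_a\to 0$. So the ratio $|\tau_az^a|_h^2/L_a$ does \emph{not} automatically inherit the super-exponential decay of the numerator from Part~1, and your one-sentence dismissal does not actually close this range. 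Concretely, for $a>a_1$ one has $L_a\gtrsim \tau_a^2 t_1^{k-2}e^{-at_1}$ with $t_1=-2\log R+\tfrac12$, so the ratio picks up a factor $(t/t_1)^k$ which is huge; showing that $e^{-a(t-t_1)}$ beats it when summed over $a>a_1$, and then comparing the outcome to $a_2B_{a_2}\rho_{0,k}$, is a genuine computation.

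The paper handles this by splitting into \emph{three} ranges: $a\leq a_2$ (your low modes), $a_2<a\leq a_1$ (where $L_a\geq 1/3$ and $\sum|c_a|^2\leq 3$, so Part~1 applies directly), and $a>a_1$ (where it abandons Cauchy--Schwarz and instead bounds each $|c_a\tau_az^a|$ pointwise via the technique of Lemma~\ref{linfinity}, then sums $\ell^1$). It then explicitly compares the three resulting bounds $\mathrm{I}=2a_2B_{a_2}$, $\mathrm{II}=6k^{(k-1)/8}e^{k-k^{9/8}}$, $\mathrm{III}$ and shows $\mathrm{I}>\mathrm{II}>\mathrm{III}$ under assumptions~1--3; the factor $18$ then comes from $(|\cdot|+|\cdot|+|\cdot|)^2\leq 9\max$. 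Your proposal skips the third range and the $\mathrm{I}$--$\mathrm{II}$--$\mathrm{III}$ comparison entirely; without them, the argument is incomplete.
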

\begin{proof}
	The condition that $s_j$ is orthogonal to all sections $s_a$, $a\leq {a_2}$, is equivalent to the condition that $s_j$ is orthogonal to all sections $\bar{s}_a$, $a\leq {a_2}$. So we have that $$<s_j,\chi_R\tau_az^a>=<s_j,\beta_a>$$
	 If we expand $s_j=\sum_{i=1}^{\infty}c_i\tau_iz^i$, then the left hand side is just $$c_a|\tau_a|^2\int_{|z|\leq R}|z^a|^2_h\omega  $$
	  Therefore we have
	$$|c_a|\parallel \tau_az^a \parallel_R^2\leq \parallel \beta \parallel_R$$
	So $|c_a|^2\leq 2B_a$ for $a\leq {a_2}$. We also have that
	$$\sum_{i=1}^{\infty}|c_i|^2\int_{|z|\leq R}|\tau_iz^i|_h^2\omega\leq 1$$
	Since we know that $\int_{|z|\leq R}|\tau_iz^i|_h^2\omega\geq 1/3$ for $i\leq a_1=\frac{k-2}{-2\log R}$, we have $$\sum_{i=1}^{a_1}|c_i|^2\leq 3$$
	So we have 
	\begin{equation}
	|\sum_{i=1}^{{a_2}}c_i\tau_iz^i|^2\leq \sum_{i=1}^{{a_2}}2B_i\rho_{0,k}\leq 2{a_2}B_{a_2}\rho_{0,k}
	\end{equation}
	When $i>k^{3/4}$ and $t=\log\frac{1}{|z|^2}\geq k^{3/8}$, we have
	$$\sum_{i>k^{3/4}}	\frac{|\tau_iz^i|^2}{|\tau_bz^b|^2}=\frac{e^{-bt}}{b^{k-1}}\sum_{i>k^{3/4}}	i^{k-1}e^{-it}$$
	To estimate the summation $\sum_{i>k^{3/4}}	i^{k-1}e^{-it}$, we write
	$i^{k-1}e^{-it}=e^{(k-1)\log i-it}$. Take the derivative of the exponent with respect to $i$, we get
	$\frac{k-1}{i}-it<0$, meaning that $i^{k-1}e^{-it}$ is decreasing as $i$ increases, and it decreases faster than the power series $\sum e^{i((k-1)/k^{3/4}-k^{3/8})}$. So we have  
	\begin{eqnarray*}
		\sum_{i>k^{3/4}}	\frac{|\tau_iz^i|^2}{|\tau_bz^b|^2}&\leq &(\frac{k^{3/4}}{k^{5/8}})^{k-1}e^{-k^{3/8}(k^{3/4}-k^{5/8})}/(1-e^{k^{1/4}-k^{3/8}})\\
		&\leq &2k^{(k-1)/8}e^{k-k^{9/8}}
	\end{eqnarray*}
	Therefore, we get
	\begin{equation}\label{terms-inside}
		\sum_{i>k^{3/4}}|\tau_iz^i|^2\leq 2k^{(k-1)/8}e^{k-k^{9/8}}|\tau_bz^b|^2
	\end{equation}
		\begin{equation}
		|\sum_{i=k^{3/4}+1}^{a_1}c_i\tau_iz^i|^2\leq 6k^{(k-1)/8}e^{k-k^{9/8}}|\tau_bz^b|^2
	\end{equation}
	, for $z\in W_k$. 
	
	\
	
	Basically by repeating the argument above, one can also prove the following
		\begin{equation}\label{terms-inside-1}
\sum_{i>k^{3/4}}i|\tau_iz^i|^2\leq 2k^{(k+5)/8}e^{k-k^{9/8}}|\tau_bz^b|^2\\
\end{equation}
\begin{equation}\label{terms-inside-2}
|\sum_{i=k^{3/4}+1}^{a_1}ic_i\tau_iz^i|^2\leq 6k^{(k+11)/8}e^{k-k^{9/8}}|\tau_bz^b|^2
\end{equation}
		, for $z\in W_k$. 
	
	\
	
	When $i>a_1$, we use the property that $|\tau_iz^i|^2_h$ is decreasing as $t$ increases from $-2\log R$ to $\infty$. So 
	\begin{eqnarray*}
		\int_{|z|\leq R}|z^i|^2_h\omega &\geq & 2\pi\int _{-2\log R}^{-2\log R+1/2}e^{(k-2)\log t-it}dt\\
		&\geq &\pi e^{(k-2)\log t_1-it_1}
	\end{eqnarray*}
	, where $t_1=-2\log R+1/2$. Which implies 
	$$|c_i\tau_i|^2\pi e^{(k-2)\log t_1-it_1}\leq 1$$
	
	On the other hand, we have
	$$\int_{|z|\leq R/2}|z^i|^2_h\omega\leq 2\pi (-2\log \frac{R}{2})^{k-2}(\frac{R}{2})^{2i}/(i-\frac{k-2}{-2\log (R/2)})$$
	So
	$$	\parallel c_i\tau_iz^i \parallel_R^2 \leq \frac{2}{i-\frac{k-2}{-2\log (R/2)}} (\frac{-2\log \frac{R}{2}}{t_1})^{k-2}e^{i(1/2-\log 4)}$$
	We can then use the technique used in the proof of lemma \ref{linfinity} to get that
	\begin{eqnarray*}
		| c_i\tau_iz^i |^2 &\leq & \frac{6|\tau_bz^b|^2}{i-\frac{k-2}{-2\log (R/2)}} (\frac{-2\log \frac{R}{2}}{t_1})^{k-2}e^{-0.88i}\\
		&&\cdot (\frac{R}{2})^{-4i}2^{4-3k}[\frac{(k-2)^{3/8}}{-\log(R/2)}]^{k-1}e^{-i(k-2)^{3/8}/2}
	\end{eqnarray*}
	, for $z\in W_k$. Then we have
	\begin{eqnarray*}
		|\sum_{i>a_1} c_i\tau_iz^i |^2 &\leq &  (\sum_{i>a_1} |c_i\tau_iz^i |)^2  \\
			&\leq &|\tau_bz^b|^2	u_1(\sum_{i>a_1}(\frac{R}{2})^{-2i}e^{-i(0.88+(k-2)^{3/8}/4)})^2\\
			&= &|\tau_bz^b|^2	u_1u_2^{2(a_1)}/(1-u_2^2(1))^2
	\end{eqnarray*}
, where
$$u_1=\frac{3/2}{a_1-\frac{(k-2)}{-2\log(R/2)}}(\frac{2\log(R/2)}{2t_1})^{k-2}[\frac{(k-2)^{3/8}}{-\log(R/2)}]^{k-1}$$
, and 
$$u_2(i)=e^{-i(0.88+(k-2)^{3/8}/4+2\log(R/2))}$$
Now we need to compare the three bounds:
\begin{eqnarray*}
	I&=&2a_2B_{a_2}\\
	&=&2k^{(2-k)/4}\frac{k-1}{k}\frac{1}{\epsilon+\lambda/k}(-2e\log (R/2))^k(\frac{R}{2})^{2k^{3/4}}
\end{eqnarray*}

$II=6k^{(k-1)/8}e^{k-k^{9/8}}$
and $III=2u_1(u_2(a_1))^2$.

We can then compute:
$$\log(III)=-(k-2)b_1+b_2$$
where $b_1=\frac{(k-2)^{3/8}}{-4\log R}-\frac{3}{8}\log(k-2)+f_1(R)$, $b_2=\frac{-5}{8}\log(k-2)+f_2(R)$
with
\begin{eqnarray*}
	f_1(R)&=&\frac{2\log 2-0.22}{\log R}-(2+\log 2)-\log(1-4\log R)\\
	&\geq &\frac{1.17}{\log R}-2.7-\log(1-4\log R)
\end{eqnarray*} 
, and 
\begin{eqnarray*}
	f_2(R)&=&-\log(-\log(R/2))+\log\frac{(\log R)(\log (R/2))}{\log 2}+\log 6\\
	&=&\log(-\log R)-\log \log 2+\log 6 
\end{eqnarray*}
Also
$$\log(II)=k-k^{9/8}+\frac{k-1}{8}\log k$$
 Then one can see that in order to have $\log (II)-\log (III)>0$,
 we can require
 \begin{eqnarray*}
 	\frac{(k-2)^{3/8}}{-8\log R}&\geq &\frac{\log(k-2)}{4}\\
 	\frac{(k-2)^{3/8}}{-9\log R}&\geq& (k-2)^{1/8}
 \end{eqnarray*} 
, both of which are satisfied when $k-2\geq (-9\log R)^4$ and $k\geq 23190$.
Therefore $II>III$ under assumptions 2 and 3.

$\log (I)-\log(II)$ is $v_1-v_2$, where $$v_1=k\log(-2\log(R/2))+2k^{3/4}\log(R/2)+k^{9/8}-\frac{3k+1}{8}\log k$$
, and
$$v_2=\log\frac{k}{k-1}+\log(\epsilon+\frac{\lambda}{k})+\log 3$$. Then under the assumptions 1 to 3, one can prove that $\log (I)-\log(II)>0$, hence
$I>II$. 

So we have proved the lemma.
\end{proof}

\begin{thm}
	Assume assumptions 1 to 3, then for $z\in W_k$, 
	$$	|\sum_{a\geq 1}|s_a|^2-\rho_{0,k}|	\leq 50(1+d)k^{3/4}B_{k^{3/4}}^{1/2}\rho_{0,k}$$
\end{thm}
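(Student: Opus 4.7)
The plan is to split the series at the cutoff $a=k^{3/4}$ and treat three pieces independently. Since $\rho_{0,k}=\sum_{a\ge 1}|\tau_a z^a|_h^2$, the triangle inequality gives
$$\Big|\sum_{a\ge 1}|s_a|^2-\rho_{0,k}\Big|\le\Big|\sum_{a\le k^{3/4}}|s_a|^2-\sum_{a\le k^{3/4}}|\tau_a z^a|^2\Big|+\sum_{a>k^{3/4}}|s_a|^2+\sum_{a>k^{3/4}}|\tau_a z^a|^2,$$
which I will call pieces (I), (II), (III). Piece (I) is exactly what the preceding lemma controls, so $(\mathrm{I})\le 50\,k^{3/4}B_{k^{3/4}}^{1/2}\,\rho_{0,k}$. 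Piece (III) is the tail of the model kernel and is bounded via inequality \ref{terms-inside} of Lemma~\ref{lem-others} by $2k^{(k-1)/8}e^{k-k^{9/8}}|\tau_b z^b|^2\le 2k^{(k-1)/8}e^{k-k^{9/8}}\rho_{0,k}$, which under assumptions 1--3 is super-exponentially smaller than the bound on (I) and is therefore absorbed.

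The heart of the proof is piece (II), an infinite sum which cannot be handled term-by-term. The key observation is that Lemma~\ref{lem-others}'s pointwise bound $|s_j|^2\le 18\,k^{3/4}B_{k^{3/4}}\,\rho_{0,k}$ was obtained using only (i) orthogonality of $s_j$ to every $\bar s_a$ with $a\le k^{3/4}$, and (ii) the unit-norm condition $\|s_j\|\le 1$. The same derivation therefore works verbatim for every unit-norm section $s$ in the orthogonal complement $V^\perp$ of $\mathrm{span}\{\bar s_a:a\le k^{3/4}\}$ inside $\hcal_k$. The reproducing-kernel variational principle now identifies $\sum_{a>k^{3/4}}|s_a|^2$ with the Bergman kernel of $V^\perp$ at $z$, which equals $\sup\{|s(z)|^2:s\in V^\perp,\ \|s\|=1\}$; hence
$$\sum_{a>k^{3/4}}|s_a|^2\ \le\ 18\,k^{3/4}B_{k^{3/4}}\,\rho_{0,k}.$$
Because $B_{k^{3/4}}^{1/2}$ is very small under our assumptions, (II) is of strictly smaller order than (I).

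Assembling the three pieces and absorbing (II) and (III) into the dominant (I) completes the estimate. The factor $(1+d)$ arises from carefully tracking constants across the three pieces and accounting for up to $O(d)$ global basis sections of $\hcal_k$ that are not localized at the singularity $a_i$ under consideration and therefore must be incorporated separately into the bound on (II). The principal obstacle is exactly this promotion of a pointwise bound on individual $s_j$ to a bound on the infinite sum $\sum_{a>k^{3/4}}|s_a|^2$; the reproducing-kernel variational characterization applied to the orthogonal complement of $\mathrm{span}\{\bar s_a:a\le k^{3/4}\}$ is what makes that step possible and brings the theorem within reach.
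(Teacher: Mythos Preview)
Your decomposition into pieces (I), (II), (III) is the same as the paper's, and your treatment of (I) and (III) is identical. The genuine difference is in piece (II), and here your argument is actually sharper than the paper's.

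The paper handles (II) by crude counting: since $\dim\hcal_k\le dk+O(1)$ by Riemann--Roch, there are at most $dk-k^{3/4}$ basis sections $s_j$ with $j>k^{3/4}$, and the per-section bound $|s_j|^2\le 18\,a_2B_{a_2}\,\rho_{0,k}$ from Lemma~\ref{lem-others} is simply multiplied by that count, giving $(\mathrm{II})\le 18(dk-k^{3/4})a_2B_{a_2}\,\rho_{0,k}$. This factor of $dk$ is precisely the origin of the $(1+d)$ in the final statement; it is then tamed using $B_{a_2}^{1/2}\le k^{-1}$ (from assumption~5) so that $18\,dk\,B_{a_2}^{1/2}\le 18d$.

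Your route is cleaner. You correctly observe that the proof of Lemma~\ref{lem-others} uses only orthogonality to the $\bar s_a$ and the unit-norm condition, so the bound $|s|^2\le 18\,a_2B_{a_2}\,\rho_{0,k}$ holds uniformly on the unit sphere of $V^\perp$. The extremal characterization of the Bergman kernel of $V^\perp$ then gives $\sum_{j>k^{3/4}}|s_j|^2\le 18\,a_2B_{a_2}\,\rho_{0,k}$ directly, with no factor of $dk$. This is correct and strictly stronger than what the paper obtains; the stated theorem follows a fortiori.

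Your only misstep is the explanation of where $(1+d)$ comes from. With your variational argument no $d$ appears at all: the bound on (II) already covers every section in $V^\perp$, regardless of whether it is ``localized'' at $a_i$. The $(1+d)$ in the theorem is an artifact of the paper's dimension-counting approach, not something your method needs to reproduce. That final sentence about ``$O(d)$ global basis sections \dots\ incorporated separately'' should simply be deleted.
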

\begin{proof}
	Combining the lemmas above, we get
	\begin{eqnarray*}
		|\sum_{a\geq 1}|s_a|^2-\rho_{0,k}|&\leq & |\sum_{a\leq {a_2}}^{ }|s_a|^2-\sum_{a\leq {a_2}}|\tau_az^a|^2|+|\sum_{a> {a_2}}|s_a|^2|+|\sum_{a> {a_2}}|\tau_az^a|^2|         \\
		&\leq &\rho_{0,k}(50{a_2}B_{a_2}^{1/2}+18(dk-k^{3/4}){a_2}B_{a_2}+\frac{1}{3}{a_2}B_{a_2})\\
		&\leq & \rho_{0,k}(50{a_2}B_{a_2}^{1/2}(\frac{9}{25}dkB_{a_2}^{1/2}+1))\\
		&\leq & 50(1+9d/25 ){a_2}B_{a_2}^{1/2}\rho_{0,k}	
	\end{eqnarray*}
, where for the last inequality, we have used assumption 5, which implies $B_{a_2}\leq \frac{1}{k^2}$.
\end{proof}

By plugging in the formula for $B_{a_2}$
, we have proved theorem \ref{thm-riemannsurface}. 

\

Let $s=fe_L^k$, then $|s|^2_h=|f|^2t^k$, so
\begin{eqnarray*}
	\partial |s|^2_h&=&(f'\bar{f}t^k+|f|^2kt^{k-1}\frac{-1}{z})dz\\
	&=&\bar{f}t^{k-1}(tf'z-kf)\frac{dz}{z}
\end{eqnarray*}
So 
$$|\nabla |s|^2_h|=\sqrt{2}t^k|f||tf'z-kf|$$
Expand $f$ as $f=\sum_{i=1}^{\infty}c_i\tau_iz^i$, then $$tf'z-kf=\sum_{i=1}^{\infty}(it-k)c_i\tau_iz^i$$

\begin{lem}\label{nabla-others}
	For $j>{a_2}$,  under the fixed frame, we have
	$$|\nabla |s_j|^2_h|\leq 18\sqrt{2}ta_2^2B_{a_2}\rho_{0,k}$$
, for $z\in W_k$, where $|\cdot|$ is understood as module of complex numbers.
\end{lem}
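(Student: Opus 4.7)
The plan is to apply the pointwise identity
\begin{equation*}
|\nabla |s_j|^2_h| = \sqrt{2}\, t^k |f_j|\cdot |t\, f_j' z - k f_j|
\end{equation*}
derived just before the lemma, where $s_j = f_j e_L^k$ in the trivialization of $L$ on $V_i$. Lemma \ref{lem-others} already controls the first factor through $|f_j|^2 t^k = |s_j|^2_h \leq 18 a_2 B_{a_2} \rho_{0,k}$, so the whole task reduces to bounding $|t f_j' z - k f_j|$. Writing $f_j = \sum_{i \geq 1} c_i \tau_i z^i$ one has $t f_j' z - k f_j = \sum_{i \geq 1}(it - k) c_i \tau_i z^i$, and I would split this sum into the same three ranges used in the proof of Lemma \ref{lem-others}: $1 \leq i \leq a_2 = k^{3/4}$, $a_2 < i \leq a_1 = (k-2)/(-2\log R)$, and $i > a_1$.

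For the main range $i \leq a_2$ I would reuse the orthogonality estimate $|c_i|^2 \leq 2 B_i \leq 2 B_{a_2}$ established in Lemma \ref{lem-others}. Because $z \in W_k$ forces $t \geq (k-2)^{3/8}$, one has $a_2 t \geq k^{9/8} \geq k$ under assumption~3, so $|it-k| \leq a_2 t$ for every $i \leq a_2$. A Cauchy--Schwarz estimate then gives
\begin{equation*}
\Bigl|\sum_{i \leq a_2}(it-k) c_i \tau_i z^i\Bigr|^2 \leq \Bigl(\sum_{i\leq a_2}|c_i|^2\Bigr)\Bigl(\sum_{i\leq a_2}(it-k)^2 |\tau_i z^i|^2\Bigr) \leq 2 a_2 B_{a_2}\cdot (a_2 t)^2 \cdot \rho_{0,k}/t^k,
\end{equation*}
and combined with $|f_j|^2 t^k \leq 18 a_2 B_{a_2}\rho_{0,k}$ this already contributes $6\sqrt{2}\, t\, a_2^2 B_{a_2}\rho_{0,k}$ to $|\nabla |s_j|^2_h|$, which is exactly the shape of the claimed bound.

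The two tail ranges are handled by the same recipes used in Lemma \ref{lem-others}, after splitting $|it - k| \leq i t + k$ so that the weighted sum is controlled by $\sum i c_i \tau_i z^i$ and $\sum c_i \tau_i z^i$. For $a_2 < i \leq a_1$ this reduces, upon squaring, to the inequalities \eqref{terms-inside} and \eqref{terms-inside-2} already proved in Lemma \ref{lem-others}; for $i > a_1$ one replays verbatim the $L^\infty$ argument of Lemma \ref{linfinity} with the extra factor $|it - k| \leq i t \leq a_1 t$. In both cases the extra polynomial weight in $k$ is easily absorbed into the exponentially small factor $k^{(\cdot)/8} e^{k - k^{9/8}}$ by the same inequality comparisons ($b_1, b_2$ and $v_1, v_2$) carried out at the end of the proof of Lemma \ref{lem-others}, so that these two ranges remain negligible relative to the first one under assumptions~1--3.

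The main obstacle is the bookkeeping of those tail estimates, and specifically verifying that the extra factor $|it-k|$ never spoils the exponential comparison; apart from this, the argument is a faithful adaptation of the proof of Lemma \ref{lem-others}, and the accumulated constant comes out equal to $18$ for the same reason. Putting the three pieces together yields $|t f_j' z - k f_j|^2 \leq 18 a_2^3 t^2 B_{a_2}\rho_{0,k}/t^k$, whence
\begin{equation*}
|\nabla |s_j|^2_h| \leq \sqrt{2}\, t^k \sqrt{18 a_2 B_{a_2} \rho_{0,k}/t^k}\cdot \sqrt{18 a_2^3 t^2 B_{a_2}\rho_{0,k}/t^k} = 18\sqrt{2}\, t\, a_2^2 B_{a_2}\rho_{0,k},
\end{equation*}
which is the inequality to prove.
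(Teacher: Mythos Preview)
Your proposal is correct and follows essentially the same route as the paper: bound $|f_j|$ via Lemma~\ref{lem-others}, bound $|tf_j'z-kf_j|$ by the same three-range splitting, and combine through the identity $|\nabla|s_j|^2_h|=\sqrt{2}\,t^k|f_j|\,|tf_j'z-kf_j|$. Two small remarks: in range~1 the paper groups the factor $(it-k)$ with $c_i$ rather than with $\tau_iz^i$ in Cauchy--Schwarz (both give the same bound $2a_2^3t^2B_{a_2}$), and in your range~3 comment the inequality ``$it\le a_1t$'' is false for $i>a_1$---the paper, like your actual argument, uses $|it-k|\le it$ and carries the extra factor $i$ into the geometric tail, which is harmless.
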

\begin{proof}
		The arguments basically repeat with a little care the arguments in the proof of lemma \ref{lem-others}. We follow the notations there. Recall that $b=k^{5/8}$, $a_1=\frac{k-2}{-2\log R}$, $a_2=k^{3/4}$
For $z\in W_k$, we will get the following:
		\begin{eqnarray*}
			|\sum_{i=1}^{a_2}(it-k)c_i\tau_iz^i|^2&\leq& (\sum_{i=1}^{a_2}|c_i|^2(it-k)^2)(\sum_{i=1}^{a_2}|\tau_iz^i|^2)\\
			&\leq&(\sum_{i=1}^{a_2}2i^2B_i)(\sum_{i=1}^{a_2}|\tau_iz^i|^2)\\
			&\leq&2a_2^3t^2B_{a_2}(\sum_{i=1}^{a_2}|\tau_iz^i|^2)
		\end{eqnarray*}
	, and
		\begin{eqnarray*}
		|\sum_{i=a_2+1}^{a_1}(it-k)c_i\tau_iz^i|^2&\leq& (\sum_{i=a_2+1}^{a_1}|c_i|^2)(\sum_{i=a_2+1}^{a_1}|(it-k)\tau_iz^i|^2)\\
		&\leq&3t^2(\frac{e^{bt}}{b^{k-1}}\sum_{i=a_2+1}^{a_1}i^{k+1}e^{-it})|\tau_bz^b|^2\\
		&\leq&6t^2k^{(k+11)/8}e^{k-k^{9/8}}|\tau_bz^b|^2
	\end{eqnarray*}
	, and
			\begin{eqnarray*}
		|\sum_{i>a_1}(it-k)c_i\tau_iz^i|^2&\leq& t^2(\sum_{i>a_1}u_1(iu_2(i)))|\tau_bz^b|^2\\
		&\leq&t^2(u_1(a_1u_2(a_1)^2|\tau_bz^b|^2)/(1-e^{\log u_2(a_1)+1/a_1})^2\\
		&\leq&t^2u_1(a_1u_2(a_1))^2|\tau_bz^b|^2
	\end{eqnarray*}
Then one can show that $I$ is larger than both $II$ and $III$.
Therefore, we have proved that $$|tf'z-kf|^2\leq  18t^2a_2^3B_{a_2}\sum_{i=1}^{\infty}|\tau_iz^i|^2$$
Then we apply lemma \ref{lem-others}, to get the conclusion.
\end{proof}
 
\begin{rem}
	One can get better estimate than the preceding lemma by using lemmas such as \ref{lattice-inside}, if it is desired.
\end{rem}

The following lemma follows directly formula \ref{terms-inside-1}.
\begin{lem}
		$$|\nabla \sum_{a>a_2}|\tau_az^a|^2_h|\leq 2\sqrt{2}tk^{(k+5)/8}e^{k-k^{9/8}}\rho_{0,k}$$
\end{lem}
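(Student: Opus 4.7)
The plan is to adapt the gradient computation carried out just above lemma \ref{nabla-others} to the sum of squares $g=\sum_{a>a_2}|\tau_a z^a|_h^2$. Differentiating termwise, with $f=\tau_a z^a$ so that $tf'z-kf=(at-k)\tau_a z^a$, one gets
$$\partial g \;=\; t^{k-1}\,\frac{dz}{z}\sum_{a>a_2}\tau_a^2|z|^{2a}(at-k),$$
and the same identity $|\nabla\cdot|=\sqrt 2\,|\partial\cdot|_\omega$ together with $|dz/z|_\omega=t$ yields
$$|\nabla g| \;=\; \sqrt 2\,t^k\,\Bigl|\sum_{a>a_2}\tau_a^2|z|^{2a}(at-k)\Bigr|.$$

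The next step is to absorb the absolute value by bounding $|at-k|\leq at$. Since $z\in W_k$ gives $t\geq (k-2)^{3/8}$ and $a>a_2=k^{3/4}$, one has $at\geq k^{3/4}(k-2)^{3/8}\geq k$ under the standing assumptions on $k$, so in fact $at-k\geq 0$ and $at-k\leq at$. Pulling the extra factor of $t$ outside the sum gives
$$|\nabla g|\;\leq\;\sqrt 2\,t\,t^k\sum_{a>a_2}a\,\tau_a^2|z|^{2a}\;=\;\sqrt 2\,t\,t^k\sum_{a>a_2}a\,|\tau_a z^a|^2,$$
and the right-hand sum is precisely the weighted tail controlled by formula \ref{terms-inside-1}.

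Inserting that bound and noting that $|\tau_b z^b|_h^2=t^k|\tau_b z^b|^2\leq\rho_{0,k}$ (since it is a single summand of the nonnegative series defining $\rho_{0,k}$) produces the constant $2\sqrt 2$ announced in the statement. There is no real obstacle here: the argument is entirely linear and the only genuine input is the sign check on $at-k$, which is exactly what the exponent $3/8$ in the definition of $W_k$ was tailored to guarantee. The same recipe would deliver analogous bounds for higher derivatives of $\sum_{a>a_2}|\tau_a z^a|_h^2$ at the cost of inserting additional powers of $a$ inside the sum, which, as the derivation of \ref{terms-inside-1} indicates, only shifts the exponent of $k$ in the final constant.
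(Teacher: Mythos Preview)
Your argument is correct and is exactly the route the paper intends: the paper's proof of this lemma is nothing more than the remark that it ``follows directly from formula \ref{terms-inside-1}'', and your computation of $\partial g$, the bound $|at-k|\le at$ on $W_k$, and the insertion of \ref{terms-inside-1} together with $|\tau_b z^b|_h^2\le\rho_{0,k}$ are precisely the details being suppressed. There is no alternative approach here to compare.
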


Also, when $a\leq a_2$, we write $s_a=g_a+\tau_az^a$. Then lemma \ref{l2} says that $\parallel g_a\parallel_R^2\leq 125aB_a$, and lemma \ref{linfinity} implies that 
$$|g_a|^2\leq 500aB_a R_k$$
, where $R_k\sum_{i=1}^{\infty}|\tau_iz^i|^2$
Then 
\begin{eqnarray*}
	\partial(|s_a|^2_h-|\tau_az^a|^2_h)&=&\frac{t^{k-1}dz}{z}[t(\bar{g}_ag_a'z+\tau_a\bar{z}^ag_a'z+a\tau_az^a\bar{g}_a)\\
	&&-k(\bar{g}_ag_a+\tau_a\bar{z}^ag_a+\tau_az^a\bar{g}_a)]
\end{eqnarray*}
We have seen that
$$|\bar{g}_ag_a+\tau_a\bar{z}^ag_a+\tau_az^a\bar{g}_a|\leq 500aB_aR_k+4\sqrt{125aB_aR_k}|\tau_az^a|$$
We also can prove, by repeating the proof of lemma \ref{linfinity}, that
$$|g_a'z|\leq k\sqrt{500aB_aR_k}$$
Therefore, 
\begin{eqnarray*}
	&&|\bar{g}_ag_a'z+\tau_a\bar{z}^ag_a'z+a\tau_az^a\bar{g}_a|\\
	&\leq &|\bar{g}_ag_a'z|+|\tau_a\bar{z}^ag_a'z|+|a\tau_az^a\bar{g}_a|\\
		&\leq &500kaB_aR_k+k\sqrt{500aB_aR_k}|\tau_az^a|+a\sqrt{500aB_aR_k}|\tau_az^a|\\
			&&\leq 24k\sqrt{aB_a}R_k
\end{eqnarray*}
So we have
\begin{eqnarray*}
	|\nabla(|s_a|^2_h-|\tau_az^a|^2_h)|&\leq&\sqrt{2}(24t+50)k\sqrt{aB_a}R_k
\end{eqnarray*}
Summing them up, we get 
\begin{lem}
		\begin{eqnarray*}
			|\nabla \sum_{a\leq a_2}(|s_a|^2_h-|\tau_az^a|^2_h)|&\leq&25\sqrt{2}tk\sqrt{a_2B_{a_2}}\rho_{0,k}
		\end{eqnarray*}
\end{lem}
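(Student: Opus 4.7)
The plan is to sum the complex-valued derivatives over $a$ first, before taking absolute values, and then apply Cauchy--Schwarz to each of the resulting bilinear sums. Simply triangle-summing the preceding pointwise bound $|\nabla(|s_a|^2_h-|\tau_az^a|^2_h)|\le\sqrt{2}\,(24t+50)k\sqrt{aB_a}\,R_k$ would yield $\sum\sqrt{aB_a}\le a_2^{3/2}B_{a_2}^{1/2}$, costing an unwanted factor $\sqrt{a_2}$; the remedy mirrors exactly the trick used in the proof of the $\rho_k-\rho_{0,k}$ bound.

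Writing $g_a:=s_a-\tau_az^a$, the formula just derived above gives, after summation,
$$z\partial_z\!\!\sum_{a\le a_2}\!\bigl(|s_a|^2_h-|\tau_az^a|^2_h\bigr)=t^{k-1}(tS_1-kS_2),$$
where $S_1=\sum_{a\le a_2}\bigl(g_a'z\bar g_a+g_a'z\,\overline{\tau_az^a}+a\bar g_a\tau_az^a\bigr)$ and $S_2=\sum_{a\le a_2}\bigl(|g_a|^2+2\Re(g_a\overline{\tau_az^a})\bigr)$. Since $|\nabla\Phi|=\sqrt{2}\,t|z\partial_z\Phi|$, this yields $|\nabla\sum_a F_a|\le\sqrt{2}\,t^k(t|S_1|+k|S_2|)$. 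The factor $t^k|S_2|$ is just $|\sum_a F_a|$, already bounded by $50a_2B_{a_2}^{1/2}\rho_{0,k}$ in the preceding theorem, and the resulting $k|S_2|$ contribution is absorbed into the target after using $t\ge (k-2)^{3/8}$ on $W_k$.

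For $|S_1|$, I would split into its three pieces and apply Cauchy--Schwarz to each, e.g.
$$\Bigl|\sum_{a\le a_2}\!g_a'z\,\overline{\tau_az^a}\Bigr|\le\Bigl(\sum|g_a'z|^2\Bigr)^{1/2}\Bigl(\sum|\tau_az^a|^2\Bigr)^{1/2},$$
plugging in the already-established pointwise bounds $|g_a|^2\le 500aB_aR_k$ and $|g_a'z|^2\le 500k^2aB_aR_k$, together with $\sum|\tau_az^a|^2=R_k$ and the monotonicity estimates $\sum_{a\le a_2}a^jB_a\le a_2^{j+1}B_{a_2}$. Using $R_k=\rho_{0,k}/t^k$ then cancels the $t^k$ prefactor, producing contributions of the form $(\text{const})\,tk\,a_2B_{a_2}^{1/2}\rho_{0,k}$ which repackage into $(\text{const})\,tk\sqrt{a_2 B_{a_2}}\rho_{0,k}$ after absorbing the leftover factor $a_2^{1/2}B_{a_2}^{1/2}\le k^{-1/8}$ guaranteed by assumption~5 (which forces $B_{a_2}\le 1/k^2$).

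The delicate summand is $\sum a\bar g_a\tau_az^a$: the extra factor $a\le a_2$ inside Cauchy--Schwarz turns $\sum aB_a$ into $\sum a^3B_a\le a_2^3B_{a_2}$, threatening a superfluous $a_2$; it is tamed by the same absorption trick, namely $a_2B_{a_2}^{1/2}\le k^{-1/4}$. The main obstacle is therefore the constant bookkeeping---combining the three Cauchy--Schwarz contributions in $|S_1|$ with the $|S_2|$ contribution into the single clean factor $25$---but this is routine given how small the smallness assumption forces $B_{a_2}$ to be.
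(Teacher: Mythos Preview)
Your detour through Cauchy--Schwarz is built on a miscount. After applying Cauchy--Schwarz with the crude monotonicity bound $\sum_{a\le a_2}a^jB_a\le a_2^{j+1}B_{a_2}$ you obtain, as you say, contributions of size $(\text{const})\,tk\,a_2B_{a_2}^{1/2}\rho_{0,k}$. But the ratio of this to the target $tk\sqrt{a_2B_{a_2}}\,\rho_{0,k}$ is
\[
\frac{a_2B_{a_2}^{1/2}}{\sqrt{a_2B_{a_2}}}=\sqrt{a_2}=k^{3/8},
\]
not $a_2^{1/2}B_{a_2}^{1/2}$. There is nothing small to absorb: the leftover factor is $k^{3/8}$, and your claimed absorption ``$a_2^{1/2}B_{a_2}^{1/2}\le k^{-1/8}$'' is simply the wrong quantity. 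The same miscount infects the ``delicate summand'' paragraph. So as written the argument does not close.

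The irony is that the triangle-inequality route you rejected is exactly what the paper does, and it works with no loss. The point you missed is that $\sqrt{aB_a}\propto a^{k/2}(R/2)^a$ is not merely monotone but increases \emph{geometrically} with enormous ratio on the range $a\le a_2=k^{3/4}$: the backward ratio
\[
\frac{\sqrt{(a-1)B_{a-1}}}{\sqrt{aB_a}}=\Bigl(1-\tfrac1a\Bigr)^{k/2}\frac{2}{R}\le \frac{2}{R}\,e^{-k^{1/4}/2}
\]
is tiny under assumption~2 (which forces $k^{1/4}\ge 9\log(2/R)$). Hence $\sum_{a\le a_2}\sqrt{aB_a}\le (1+o(1))\sqrt{a_2B_{a_2}}$, and summing the per-term bound $|\nabla(|s_a|^2_h-|\tau_az^a|^2_h)|\le\sqrt{2}(24t+50)k\sqrt{aB_a}R_k$ directly gives the lemma after using $t\ge(k-2)^{3/8}$ to absorb the additive $50$. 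Your crude estimate $\sum\sqrt{aB_a}\le a_2\sqrt{a_2B_{a_2}}$ throws away this geometric decay; with the correct bound neither the Cauchy--Schwarz rearrangement nor the absorption gymnastics is needed.
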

So we have the theorem
\begin{thm}With the assumptions 1 to 3 and  $2k^{3/4}\geq \frac{\log(\epsilon+\lambda/k)}{\log (R/2)}$, we have
	$$	|\nabla (\rho_k-\rho_{0,k})|\leq (1+\frac{d}{k^2})25\sqrt{2}kt\sqrt{a_2B_{a_2}}\rho_{0,k} $$
\end{thm}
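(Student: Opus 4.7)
The plan is to decompose the difference $\rho_k-\rho_{0,k}$ into three pieces matching the three lemmas established just before the statement, and apply the triangle inequality pointwise to $|\nabla(\rho_k-\rho_{0,k})|$. Explicitly, I would write
\begin{eqnarray*}
\rho_k-\rho_{0,k} &=& \sum_{a\leq a_2}\bigl(|s_a|^2_h-|\tau_az^a|^2_h\bigr) \;+\; \sum_{j>a_2}|s_j|^2_h \;-\; \sum_{a>a_2}|\tau_az^a|^2_h,
\end{eqnarray*}
where $a_2=k^{3/4}$, and bound the $\nabla$ of each piece separately.

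The first piece is controlled directly by the lemma giving $|\nabla \sum_{a\leq a_2}(|s_a|^2_h-|\tau_az^a|^2_h)|\leq 25\sqrt{2}kt\sqrt{a_2B_{a_2}}\rho_{0,k}$; this will be the main term. The third piece is controlled by the lemma yielding the super-exponentially small bound $2\sqrt{2}tk^{(k+5)/8}e^{k-k^{9/8}}\rho_{0,k}$, which under assumptions 1 to 3 is absorbed into the main term in exactly the same way as in the proof of Theorem \ref{thm-riemannsurface}. For the second piece, I would apply Lemma \ref{nabla-others} pointwise to each $s_j$ with $j>a_2$ and sum; since $\dim \hcal_k\leq dk$ (this is the same counting used in the proof of Theorem \ref{thm-riemannsurface}, where the number of extra sections is bounded by $dk-a_2$), this yields a bound of the form $(dk-a_2)\cdot 18\sqrt{2}ta_2^2B_{a_2}\rho_{0,k}$.

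The final combinatorial step is to factor out the main term $25\sqrt{2}kt\sqrt{a_2B_{a_2}}\rho_{0,k}$ and show that the middle contribution produces a multiplier of order $d\,a_2^{3/2}B_{a_2}^{1/2}$ which, under the extra hypothesis, is bounded by $d/k^2$. This is where the auxiliary assumption $2k^{3/4}\geq \frac{\log(\epsilon+\lambda/k)}{\log(R/2)}$ is used: since $\log(R/2)<0$, it translates into $(R/2)^{2a_2}\leq \epsilon+\lambda/k$, and the explicit formula $B_{a_2}=\frac{k^{1/2}(k-1)}{a_2(k\epsilon+\lambda)}\bigl(\tfrac{-2ea_2\log(R/2)}{k}\bigr)^{k}(R/2)^{2a_2}$ then collapses to a quantity in which the $(R/2)^{2a_2}$ factor cancels the $k\epsilon+\lambda$ denominator, leaving the rapidly decaying factor $\bigl(\tfrac{-2e\log(R/2)}{k^{1/4}}\bigr)^k$. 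Under assumptions 1 to 3 this factor forces $a_2^{3/2}B_{a_2}^{1/2}\leq 1/(25k^2)$ up to harmless constants, giving the $(1+d/k^2)$ prefactor in the statement.

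The main obstacle is this last comparison: whereas the proof of Theorem \ref{thm-riemannsurface} only needed $B_{a_2}\leq k^{-2}$, the gradient bound produced by Lemma \ref{nabla-others} carries an extra factor of $a_2$ and an extra factor of $k$ (from the $it-k$ coefficient), so that one needs an effective decay $B_{a_2}^{1/2}\lesssim 1/(k^2 a_2^{3/2})$, i.e., roughly $B_{a_2}=O(k^{-25/4})$. Verifying this refined bound is exactly what the auxiliary hypothesis $2k^{3/4}\geq \frac{\log(\epsilon+\lambda/k)}{\log(R/2)}$ is tailored for, and it is the step to carry out with care; once it is in place, the theorem follows by assembling the three gradient lemmas via the triangle inequality.
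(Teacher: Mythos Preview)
Your proposal is correct and follows essentially the same route as the paper: the paper also decomposes $\nabla(\rho_k-\rho_{0,k})$ into the same three pieces, invokes the three gradient lemmas you cite, normalizes by $\sqrt{2}kt\rho_{0,k}$ to obtain the three quantities $I=25\sqrt{a_2B_{a_2}}$, $II=2k^{(k-3)/8}e^{k-k^{9/8}}$, $III=18d\,a_2^2B_{a_2}$, and then uses the auxiliary hypothesis (equivalently $(R/2)^{2a_2}\le \epsilon+\lambda/k$, exactly as you deduced) to verify $III\le (d/k^3)I$ and $II\le (1/k^2)I$, yielding the $(1+d/k^2)$ prefactor.
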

\begin{proof}
	Again, we need to compare the three terms 
	\begin{eqnarray*}
		I&=&25\sqrt{a_2B_{a_2}}\\
			II&=&2k^{(k-3)/8}e^{k-k^{9/8}}  \\
				III&=&18da_2^2B_{a_2}
	\end{eqnarray*}
With the additional assumption that $2k^{3/4}\geq \frac{\log(\epsilon+\lambda/k)}{\log (R/2)}$, one can show that $III\leq \frac{d}{k^3}I$ and $II\leq \frac{1}{k^2}I$, hence the conclusion. 

\end{proof}
By plugging in the formulas for $a_2$ and $B_{a_2}$, we get theorem \ref{nabla}

\bibliographystyle{plain}

\bibliography{references}

\end{document}